\ifdef{\ebook}{
  \usepackage[paperwidth=9cm, paperheight=12cm, hmargin={0.17in, 0.17in}, vmargin={0.50in, 0.17in}]{geometry} \usepackage{kmath,kerkis}
}{}
\newtheorem{theorem}{Theorem}[section]
\newtheorem{lemma}[theorem]{Lemma}
\newtheorem{proposition}[theorem]{Proposition}
\newtheorem{corollary}[theorem]{Corollary}
{\theoremstyle{remark}
\newtheorem{remark}[theorem]{Remark}

}
\theoremstyle{definition}
\newtheorem{definition}[theorem]{Definition}
\newtheorem{example}[theorem]{Example}
\newcommand{\cI}{\mathcal{I}}
\newcommand{\cJ}{\mathcal{J}}
\newcommand{\CC}{\mathbb{C}}
\newcommand{\RR}{\mathbb{R}}
\newcommand{\QQ}{\mathbb{Q}}
\newcommand{\ZZ}{\mathbb{Z}}
\newcommand{\PP}{\mathbb{P}}
\newcommand{\fan}{\mathcal{S}}
\renewcommand{\Vert}{\fan^{(0)}}
\newcommand{\Hor}{(\tail \fan)^\times}
\newcommand{\CO}{{\mathcal{O}}}
\newcommand{\D}{{\mathcal{D}}}
\newcommand{\sdeg}{\deg \fan}
\newcommand{\walls}[1]{{\Delta^\partial}}
\DeclareMathOperator{\id}{id}
\newcommand{\rvol}{\vol_1}
\DeclareMathOperator{\bc}{bc}
\newcommand{\rbc}{\bc_1}
\DeclareMathOperator{\Cox}{Cox}
\DeclareMathOperator{\Aut}{Aut}
\DeclareMathOperator{\lin}{lin}
\DeclareMathOperator{\codim}{codim}
\DeclareMathOperator{\tail}{tail}
\DeclareMathOperator{\wdiv}{Div}
\DeclareMathOperator{\proj}{Proj}
\DeclareMathOperator{\conv}{conv}
\DeclareMathOperator{\pyr}{pyr}
\DeclareMathOperator{\pos}{pos}
\DeclareMathOperator{\vol}{vol}
\DeclareMathOperator{\cl}{Cl}
\newlength{\tpheight}\setlength{\tpheight}{0.9\textheight}
\newlength{\txtheight}\setlength{\txtheight}{0.9\tpheight}
\newlength{\tpwidth}\setlength{\tpwidth}{0.9\textwidth}
\newlength{\txtwidth}\setlength{\txtwidth}{0.9\tpwidth}
\title[Fano 3folds picture book]{Fano threefolds with 2-torus action\\
 \footnotesize  \rm A picture book}
\subjclass[2010]{14L30 (Primary) 14J45, 32Q20 (Secondary)}
\keywords{T-variety, torus action, moment polytope, Fano variety, Kahler-Einstein metric, Cox ring}
\thanks{The author was supported by a fellowship of the Alexander von Humboldt Foundation.}
\author[H. S\"u{\ss}]{Hendrik S\"u\ss}
\address{Hendrik S\"u\ss\\ School of Mathematics
The University of Edinburgh
James Clerk Maxwell Building
The King's Buildings
Mayfield Road
Edinburgh EH9 3JZ}
\email{\href{mailto:suess@math.tu-cottbus.de}{suess@sdf-eu.org}}
\begin{document}
\thispagestyle{empty}
\maketitle
\vspace{-1.2em}
\begin{center}
  {\psset{unit=0.13cm}\titlepic}
\end{center}
  
\begin{abstract}
Following the work of Altmann and Hausen we give a combinatorial description for smooth Fano threefolds admitting a 2-torus action. We show that a whole variety of properties and invariants can be read off from this description. As an application we prove and disprove the existence of K\"ahler-Einstein metrics for some of these Fano threefolds, calculate their Cox rings and some of their toric canonical degenerations.
\end{abstract}

\section{Introduction}
\label{sec:introduction}
Knowing the fan or the polytope of a projective toric variety does not only give a concrete construction of the variety but also enables us to explore its properties and to easily calculate a whole bunch of invariants. In low dimensions a close look on the corresponding pictures is enough to obtain detailed information about the variety. Loosely speaking we can even prove statements by drawing pictures.

 In \cite{pre05013675} and \cite{divfans} the description of toric varieties by cones and fans was generalized to the case of an arbitrary $T$-variety, i.e. a variety $X$ endowed with the effective action of an algebraic torus $T$.  This general description consists of combinatorial data living on the Chow quotient $X/\!\!/T$. A series of papers by different authors followed this approach to derive properties and calculate invariants out of the combinatorial data. This works especially well for the case of complexity-one torus action, i.e. when the torus has one dimension less then the variety and the Chow quotient is just a curve.

The aim of this paper is to make these results available for Fano threefolds with an action of a two-dimensional torus. More precisely,  we are going to state the needed combinatorial description for a list of those Fano threefolds and relate it to the classification of Mori-Mukai \cite{mm86}. By the mentioned preliminary results we can immediately derive a lot of additional information about these Fano varieties. 

We want to highlight two concrete applications. First, by \cite{cox} we are able to calculate the Cox rings of these T-varieties. The knowledge of the Cox ring may give further inside in the geometry of the variety. For example the automorphism group can be studied by the methods from \cite{arzhantsev2012automorphism}. Since, these Cox ring are complete intersections we can see the corresponding Fano varieties as complete intersections inside certain toric varieties. This was observed also in \cite{coatesFanoCI}.

As a second application we consider the question of the existence of K\"ahler-Einstein metrics on Fano manifolds.  In general this is a hard question. There are some known obstructions such as the vanishing of the Futaki character \cite{0506.53030}. There is also a sufficient criterion due to Tian \cite{0599.53046}. 
Again for toric Fano varieties the K\"ahler-Einstein property can be easily reformulated in terms of the defining polytope. By a result of Wang and Zhu a toric Fano variety is K\"ahler-Einstein if and only if the Futaki character vanishes \cite{1086.53067}. Moreover, the Futaki character can be easily calculated as the barycenter of the polytope corresponding to the toric Fano manifold \cite{0661.53032}. We are going to generalize the last result for our situation of complexity-one torus actions in order to show that the Futaki character does not vanish for some of the threefolds, and hence disprove the existence of K\"ahler-Einstein metrics for them. We are using an application of Tian's criterion from \cite{kesym} to prove the existence of K\"ahler-Einstein metrics for three of the remaining ones.

Our main conclusions drawn from the given combinatorial description are summarized in the following theorem.
\begin{theorem}
\label{thm:main}
    The Fano threefolds $Q$, 2.24\footnote{Only one variety of the family admits a 2-torus action}, 2.29--2.32, 3.10\footnotemark[\value{footnote}], 3.18--3.24, 4.4, 4.5, 4.7 and 4.8 from Mori's and Mukai's classification admit a 2-torus action. 

The moment polytopes together with their Duistermaat-Heckman measures are given in Section~\ref{sec:pictures}. The Cox rings, Futaki characters $F(X)$  as well as the existence of a K\"ahler-Einstein metric can be found in the following table. 
\begin{center}
\begin{tabular}[htbp]{llcc}
\toprule
No.&Cox ring&F(X)&K\"ahler-Einstein\\
\midrule
$Q$&$k[\underline{T}]/(T_1T_2+T_3T_4+T_5^2)$&$0$&yes\\
$2.24\footnotemark[\value{footnote}]$&$k[\underline{T}]/(T_1T_2^2+T_3T_4^2+T_5T_6^2)$&$0$&yes\\
$2.29$&$k[\underline{T}]/(T_1T_2^2T_3+T_4T_5+T_6^2)$&$0$&?\\
$2.30$&$k[\underline{T},S_1]/(T_1T_2+T_3T_4+T_5^2)$&$\binom{0}{-2}$&no\\
$2.31$&$k[\underline{T}]/(T_1T_2+T_3T_4+T_5T_6^2)$&$\binom{-\nicefrac{4}{3}}{-\nicefrac{4}{3}}$&no\\
$2.32$&$k[\underline{T}]/(T_1T_2+T_3T_4+T_5T_6)$&$0$&yes\\
\midrule
$3.10\footnotemark[\value{footnote}]$&$k[\underline{T}]/(T_1T_2^2T_3+T_4T_5^2T_6+T_7^2)$&$0$&yes\\
$3.18$&$k[\underline{T},S_1]/(T_1T_2^2T_3+T_4T_5+T_6^2)$&$\binom{0}{-\nicefrac{7}{8}}$&no\\
$3.19$&$k[\underline{T},S_1,S_2]/(T_1T_2+T_3T_4+T_5^2)$&$0$&?\\
$3.20$&$k[\underline{T}]/(T_1T_2+T_3T_4+T_5T_6^2T_7)$&$0$&?\\
$3.21$&$k[\underline{T},S_1]/(T_1T_2^2+T_3T_4^2+T_5T_6)$&$\binom{\nicefrac{7}{8}}{\nicefrac{7}{8}}$&no\\
$3.22$&$k[\underline{T},S_1,S_2]/(T_1T_2+T_3T_4+T_5^2)$&$\binom{0}{-\nicefrac{2}{3}}$&no\\
$3.23$&$k[\underline{T},S_1]/(T_1T_2+T_3T_4+T_5T_6^2)$&$\binom{-\nicefrac{13}{12}}{-\nicefrac{53}{24}}$&no\\
$3.24$&$k[\underline{T},S_1]/(T_1T_2+T_3T_4+T_5T_6)$&$\binom{-\nicefrac{2}{3}}{-\nicefrac{4}{3}}$&no\\
\midrule
$4.4$&$k[\underline{T},S_1,S_2]/(T_1T_2^2T_3+T_4T_5+T_6^2)$&$0$&?\\
$4.5$&$k[\underline{T},S_1,S_2]/(T_1T_2^2+ T_3T_4^2 + T_5T_6)$&$\binom{\nicefrac{5}{24}}{\nicefrac{5}{24}}$&no\\
$4.7$&$k[\underline{T},S_1,S_2]/(T_1T_2+T_3T_4+T_5T_6)$&$0$&?\\
$4.8$&$k[\underline{T},S_1,S_2]/(T_1T_2+T_3T_4+T_5T_6)$&$\binom{0}{-\nicefrac{13}{12}}$&?\\
\bottomrule
\end{tabular}
\end{center}
\end{theorem}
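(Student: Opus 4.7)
The proof is a case-by-case combinatorial verification built around the Altmann-Hausen description. The first task is to identify those smooth Fano threefolds $X$ in Mori and Mukai's classification \cite{mm86} which carry a faithful 2-torus action: those obtained as toric blow-ups, projective bundles over toric surfaces, or relative complete intersections in such inherit a torus action from the ambient toric variety, while the isolated members of non-toric families (2.24, 3.8, 3.10) must be handled individually. For each such $X$ I would then construct the divisorial fan $\fan$ on the Chow quotient $X/\!\!/T \cong \PP^1$ whose associated $T$-variety, by the Altmann-Hausen correspondence \cite{pre05013675,divfans}, is $X$; the combinatorial data consist of the three nontrivial slices $\fan_0, \fan_\infty, \fan_1$ and their common tail fan, and yield the moment polytope $\sdeg$ pictured in Section~\ref{sec:pictures}.

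To confirm the identification against the Mori-Mukai list I would match the standard discrete invariants: Picard rank, anticanonical degree $(-K_X)^3$, index, and when necessary Hodge numbers. Each of these admits a combinatorial expression for complexity-one $T$-varieties -- the Picard rank from the exact sequence computing the divisor class group, and the degree from a lattice-volume integration over $\sdeg$ -- and together they pin down the variety uniquely in every case. The Cox rings are then read off via the general formula of \cite{cox}: for a complete rational complexity-one $T$-variety the Cox ring is generated by one variable per ray of each slice and one extra variable $S_i$ per horizontal prime divisor, modulo a single trinomial relation coming from the three exceptional points $0,\infty,1 \in \PP^1$. Reading the generators and the trinomial off $\fan$ produces the algebras listed in the table.

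For the Futaki character I would extend the classical toric barycenter formula \cite{0661.53032} to complexity-one actions. In the toric case $F(X)$ is, up to normalization, the barycenter of the moment polytope; for our setting one obtains $F(X)$ as an integral over $\sdeg$ weighted by a piecewise affine function reflecting the three slices, i.e.\ a Duistermaat-Heckman-type integration along the fibres of the quotient map. Computing this integral on each polytope of Section~\ref{sec:pictures} produces the entries in the $F(X)$ column, and whenever the result is nonzero the classical Matsushima-Futaki obstruction rules out a K\"ahler-Einstein metric, accounting for every ``no'' in the table. For the four cases $Q$, 2.24, 2.32, 3.10 the existence of a K\"ahler-Einstein metric is established via the equivariant $\alpha$-invariant criterion of \cite{kesym}: in each of these the automorphism group contains a reductive subgroup $G$ large enough to force $\alpha_G(X) > 3/4$, so Tian's theorem applies; the remaining entries with vanishing Futaki character are left open.

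The main obstacle is the Futaki computation. Producing the correct complexity-one analogue of the toric barycenter formula -- making sure the weighting by sections of $-K_X$ along the $\PP^1$-base is treated consistently at the three special slices and normalized correctly against the lattice -- and then carrying out the piecewise integration on each polytope without sign or normalization errors is the delicate step; the Cox ring bookkeeping and invariant matching, while voluminous, are comparatively mechanical.
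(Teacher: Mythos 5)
Your overall strategy coincides with the paper's: exhibit combinatorial data for each threefold, identify it in the Mori--Mukai list by its invariants, read off the Cox ring via \cite{cox} (Theorem~\ref{thm:cox-ring}), rule out K\"ahler--Einstein metrics by a nonvanishing Futaki character, and prove existence in the four remaining cases via \cite{kesym}. Two remarks on the route. The paper argues from the combinatorics to the variety: one writes down an f-divisor $\fan$, checks smoothness by the regularity criterion of Theorem~\ref{thm:smoothness}, computes Picard rank (Corollary~\ref{sec:cor-picard-rank}) and degree (Theorem~\ref{sec:intersection-nr}), and only then invokes the classification; in your direction (computing the Altmann--Hausen data of a known $X$) the smoothness check is unnecessary, but deriving the slices from the geometric description is then the nontrivial part, and matching index or Hodge numbers is not needed---rank and degree suffice. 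The ``yes'' entries are obtained through Theorem~\ref{thm:kesym}, i.e.\ by verifying $M^{\Aut(\fan)}=0$ (Proposition~\ref{prop:symmetries2}) together with the multiplicity conditions $\mu(P)>1$ in three slices; this is the combinatorial form of the $\alpha$-invariant argument you invoke, so that part is essentially the same.

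The genuine gap is the Futaki character formula itself, which you rightly single out as the delicate step but do not actually supply. The paper's Theorem~\ref{thm:futaki-simplified}, $F(X)=\vol\Psi\cdot\overline{\bc}\,\Psi$, is proved in the appendix from Donaldson's algebraic definition: $\dim H^0(X,\CO(kD))$ and its total weight are encoded as lattice-point counts on a weighted sum of polytopes attached to the divisorial polytope $\Psi$, expanded asymptotically in $k$, and the Fano normalization (all relevant facets at lattice distance one, Remark~\ref{rem:lattice-distance} and Lemma~\ref{lem:distance-1}) is what collapses the resulting boundary-versus-interior barycenter difference into the barycenter of $\Delta(\deg\Psi)$. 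A Duistermaat--Heckman-type fibre integration is plausible but is not carried out here, and your phrasing ``an integral over $\sdeg$'' names the wrong object: the integration takes place over the moment polytope $\Box\subset M_\QQ$ (the domain of $\Psi$, not the degree part $\sdeg\subset N_\QQ$ of the f-divisor), weighted by $\deg\Psi$. Without this formula, proved with the correct domain and anticanonical normalization, the $F(X)$ column---and hence every ``no'' in the table---is not established.
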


\vspace{0.5em}
The paper is organized as follows. In Section~\ref{sec:torus-actions} we recall the combinatorial description of complexity-one torus actions from \cite{tvars}. In Section~\ref{sec:torus-invariant-divisors} we describe equivariant polarizations and give a characterization of the Fano property.  Moreover, we recall how to calculate the Cox ring from the combinatorial description and we state a simple formula for the Futaki character. In Section~\ref{sec:degenerations} we show how to find toric degenerations for our T-varieties. Finally in Section~\ref{sec:pictures} we state the list with the combinatorial descriptions for all the considered Fano varieties, which in turn leads to the proof of Theorem~\ref{thm:main}. The appendix provides the proof for the formula for the Futaki character.

\subsection*{Correction} In the published version of this paper in Theorem~\ref{thm:main} it was wrongly claimed that there is also an element with 2-torus action in the family 3.8. However, it was overlooked that the corresponding element is not smooth. For comparison see also  \cite{2018arXiv180909223C}.

\subsection*{Acknowledgments} I would like to thank Ivan Arzhantsev, Nathan Ilten and Johan Martens for important comments and conversations and Ivan Cheltsov, Victor Przyjalkowski and Constantin Shramov for pointing out an error in the published version of this paper.
 
\goodbreak
\section{Combinatorial description of torus actions of complexity one}
\label{sec:torus-actions}
\subsection*{F-divisors}
We consider a variety $X/_{\CC}$ with an effective action of an algebraic torus $T \cong (\CC^*)^r$. Then $X$ is called a $T$-variety of complexity $(\dim X - \dim T)$. In the following we restrict ourselves to the case of rational $T$-varieties of complexity one. Such objects where described and studied earlier in \cite{0271.14017,1151.14037}. We are using the more general approach of \cite{pre05013675} which describes $T$-varieties of arbitrary complexity, but restrict it to the case of rational complexity-one $T$-varieties. Our general references are \cite{pre05013675,divfans,tvars}.

We follow the standard terminology for toric varieties. The character lattice of the torus $T$ is denoted by $M$ and we set $N:=M^*$. For the associated vector spaces we write $M_\QQ$ and $N_\QQ$ or $M_\RR$ and $N_\RR$, respectively. 
 
For a polyhedron $\Delta \subset N_\QQ$ we consider its tail (or recession) cone $\tail \Delta$ defined by $\tail \Delta := \{v \in N_\QQ \mid \Delta + \QQ_{\geq 0} \cdot v = \Delta \}$. For a complete polyhedral subdivision $\Xi$ of $N_\QQ$ the set of tail cones has the structure of a fan, which is called the tail fan of $\Xi$ and will be denoted by $\tail \Xi$. Now, consider a pair \[\fan = \left(\sum_{P\in \PP^1} \fan_P \otimes P,\quad \sdeg\right)\] Here,  $\sum_P \fan_P \otimes P$ is just a formal sum and $\fan_P$ are complete polyhedral subdivisions of $N_\QQ$ with some common tail fan denoted by $\tail \fan$. For the moment the degree $\sdeg$ is just a subset of $N_\QQ$. The subdivisions  $\fan_P$ are called \emph{slices} of $\fan$. We assume that there are only finitely many \emph{non-trivial} slices, i.e. those which differ from the tail fan. Note, that for every full-dimensional $\sigma\in\tail(\fan)$ there is a unique polyhedron $\Delta_P^\sigma$ in $\fan_P$ with $\tail(\Delta_P^\sigma)=\sigma$. 
\begin{definition}
\label{def:f-divisor}
  A pair as above is called a (complete) \emph{f-divisor} if for any full-dimensional $\sigma\in\tail(\fan)$ we have either $\sdeg\cap\sigma = \emptyset$ or \[\sum_P \Delta^\sigma_P = \sdeg\cap\sigma \subsetneq \sigma.\]
\end{definition}

A (complete) f-divisor $\fan$ as above corresponds to a rational complete $T$-variety $X(\fan)$ of complexity one, see \cite[Section~5.3]{tvars}. It comes with a rational quotient map $\pi:X(\fan) \dashrightarrow \PP^1$.

\begin{example}[Fano threefold 3.10]  
\label{ex:threefold3.10-1}
We consider the f-divisor $\fan$ with three non-trivial slices and the degree given in Figure~\ref{fig:f-div-310}. It is straight forward to check, that the condition of Definition~\ref{def:f-divisor} is indeed fulfilled.
 \begin{figure}[htpb]
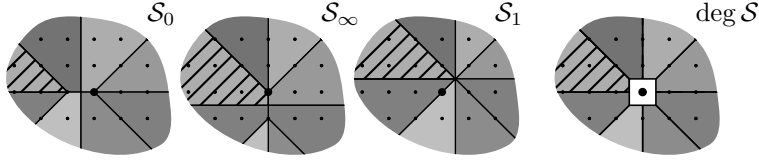

    \centering
    \threefoldBBHL
    \caption{f-divisor of the Fano threefold 3.10.}
    \label{fig:f-div-310}
  \end{figure}

As we will see later, this f-divisor corresponds to a blow up of the quadric threefold in two disjoint conics, i.e. it is an element of the family 3.10 from the classification of Mori and Mukai.
\end{example}

By \cite[Proposition~5.1, Theorem~5.3]{tsing} we have the following criterion for the smoothness of $X(\fan)$.

\begin{theorem}
  \label{thm:smoothness}
  The T-variety $X(\fan)$ is smooth if and only if for every  cone $\sigma \in \tail \fan$ one of the following two conditions is fulfilled 
  \begin{enumerate}
  \item if $\sigma \cap \sdeg=\emptyset$ then cone \(\QQ_{\geq 0} \cdot (\Delta \times \{1\} \,+\, \sigma \times \{0\})\) is regular, for every $P$ and every maximal polyhedron $\Delta \subset \fan_P$ with tail cone $\sigma$.
  \item if $\sigma$ is maximal and $\sigma \cap \sdeg \neq \emptyset$ then for at most two points $P=Q,R$ the polyhedron $\Delta^\sigma_P$ is not a lattice translate of $\sigma$ and the cone
\[\QQ_{\geq 0} \cdot \big(\sum_{P \neq Q}\Delta^\sigma_P  \times \{1\} \;\cup\; \sigma \times \{0\} \;\cup\; \Delta^\sigma_{Q} \times \{-1\}\big)\]
is regular.
  \end{enumerate}
\end{theorem}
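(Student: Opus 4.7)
The plan is to reduce the statement to the affine case and then apply known smoothness criteria for affine $T$-varieties arising from polyhedral divisors in the Altmann--Hausen framework.

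First I would invoke the canonical $T$-invariant affine open cover of $X(\fan)$ by charts $X(\D^{\sigma,\Delta})$ indexed by maximal polyhedra $\Delta\subset\fan_P$ with tail cone $\sigma\in\tail\fan$, as described in \cite[Section~5.3]{tvars}. Since smoothness is affine-local, it suffices to characterize when each of these charts is smooth, and the answer depends only on the polyhedral divisor $\D^\sigma=\sum_P\Delta_P^\sigma\otimes P$.

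In the case $\sigma\cap\sdeg=\emptyset$, the f-divisor condition forces $\D^\sigma$ to be non-proper on $\PP^1$: after removing a suitable point $Q$ it restricts to a polyhedral divisor on $\A^1=\PP^1\setminus\{Q\}$. Under this realization the affine variety splits chart-by-chart so that each maximal polyhedron $\Delta\subset\fan_P$ with tail $\sigma$ contributes a toric chart whose cone is $\QQ_{\geq 0}\cdot(\Delta\times\{1\}+\sigma\times\{0\})$. The Altmann--Hausen smoothness criterion for polyhedral divisors over an affine base then reduces precisely to the regularity of each of these cones, which is condition~(i). This step is essentially formal once the affine cover is in place.

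When $\sigma$ is maximal and $\sigma\cap\sdeg\neq\emptyset$, the divisor $\D^\sigma$ is a genuine proper polyhedral divisor on all of $\PP^1$, so I would apply the smoothness criterion of \cite[Prop.~5.1, Thm.~5.3]{tsing}. That criterion characterizes smoothness of $X(\D^\sigma)$ at its $T$-fixed points by the regularity of a single cone assembled by placing some slices at height $+1$, the tail cone at height $0$ and one distinguished slice at height $-1$, mirroring a Proj-like construction; moreover it forces at most two slices to be non-trivial. Translating this yields condition~(ii) verbatim: the polyhedra $\Delta_P^\sigma$ are lattice translates of $\sigma$ except possibly at two points $Q,R$, and the cone \[\QQ_{\geq 0}\cdot\bigl(\sum_{P\neq Q}\Delta_P^\sigma\times\{1\}\;\cup\;\sigma\times\{0\}\;\cup\;\Delta_Q^\sigma\times\{-1\}\bigr)\] is regular. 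The main obstacle I expect is the bookkeeping needed to match notations and to verify that the stated criterion is genuinely symmetric in the role of $Q$ versus $R$ (i.e. that one may swap which of the two non-trivial slices is placed at height $-1$ without affecting regularity), so that condition~(ii) is intrinsic to the f-divisor rather than dependent on an auxiliary choice.
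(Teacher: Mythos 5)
Your proposal is correct and takes essentially the same route as the paper: the paper offers no argument of its own, simply attributing the criterion to \cite[Proposition~5.1, Theorem~5.3]{tsing}, and your reduction to the invariant affine charts of $X(\fan)$ --- the toric Cayley-cone condition for cones with $\sigma\cap\sdeg=\emptyset$ and the criterion of \cite{tsing} for the proper charts with $\sigma\cap\sdeg\neq\emptyset$ --- is precisely the content implicit in that citation. Your remaining worry about the symmetry between $Q$ and $R$ is harmless, since by the cited theorem the regularity of either cone is equivalent to smoothness of the same affine chart, so the choice of which non-integral slice is placed at height $-1$ cannot matter.
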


\begin{example}[Fano threefold 3.10 (continued)]  
\label{ex:threefold3.10-1b}
We consider again the f-divisor from Example~\ref{ex:threefold3.10-1}. There is no  maximal polyhedron $\Delta$ with $\tail \Delta \cap \sdeg=\emptyset$. Look at the shaded polyhedra in Figure~\ref{fig:f-div-310}. They have tail cone  $\sigma = \QQ_{\geq 0} \cdot (-1,0) +  \QQ_{\geq 0} \cdot (-1,1)$. Only $\Delta_1^\sigma$ and $\Delta^\sigma_\infty$ are not a lattice translate of $\sigma$ and we obtain
 \begin{align*}
    & \; \QQ_{\geq 0} \cdot \big((\Delta_1^\sigma + {\scriptstyle (-1,0)}) \times \{1\} \,\cup\, \sigma \times \{0\} \,\cup\, \Delta^\sigma_\infty \times \{-1\}\big)\\
= & \; \QQ_{\geq 0} \cdot (-1,1,2) \,+\, \QQ_{\geq 0} \cdot (0,0,-1) \,+\, \QQ_{\geq 0} \cdot (0,-1,-2),
 \end{align*}
which is regular cone. Similarly for the other cones from the tail fan we get the same result.  Hence, by Theorem~\ref{thm:smoothness} we obtain the smoothness of $X(\fan)$.
\end{example}

\subsection*{Symmetries}
\label{sec:isom-symm}
In \cite[Section~5.3]{tvars} there is also a characterization of f-divisors giving rise to isomorphic T-varieties. We consider isomorphisms  $\varphi: \PP^1 \rightarrow \PP^1$ and $F:N\rightarrow N'$. Having this we define 
\[F(\varphi^*\fan) :=\left(\sum_P(F(\fan_{\varphi^{-1}(P)})) \otimes P,\quad F(\sdeg)\right).\]
\begin{definition}
  Two f-divisors $\fan$, $\fan'$ on $\PP^1$ are called equivalent if $\sdeg = \sdeg'$ and there are lattice points $v_P \in N$ with $\sum_P v_P=0$, such that $\fan_P + v_P = \fan'_P$. In this situation we write $\fan \sim \fan'$.
\end{definition}

\begin{proposition}
\label{prop:iso-global}
  $\fan$ and $\fan'$ lead to isomorphic T-varieties if and only if there are 
  isomorphisms  $\varphi: \PP^1 \rightarrow \PP^1$ and $F:N\rightarrow N'$, such that $F(\varphi^*\fan) \sim \fan'$.
\end{proposition}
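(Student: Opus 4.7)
The plan is to reduce to the affine (polyhedral divisor) case treated by Altmann and Hausen and then use that the f-divisor is obtained by gluing the affine pieces indexed by the maximal cones of $\tail\fan$. The essential content of the statement is threefold: (i) a lattice isomorphism $F$ records that the acting tori must be identified; (ii) the automorphism $\varphi$ records that the rational quotient map $\pi:X(\fan)\dashrightarrow\PP^1$ is unique only up to an automorphism of the base; and (iii) the lattice vectors $v_P$ with $\sum_P v_P=0$ record the principal-divisor ambiguity inherent in writing down a polyhedral divisor (since principal divisors on $\PP^1$ have degree zero and yield trivial line bundles, they act trivially on the global construction).

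For the ``if'' direction I would construct the isomorphism affine chart by affine chart. For each full-dimensional $\sigma\in\tail\fan$ the f-divisor restricts to a polyhedral divisor $\D_\sigma=\sum_P\Delta_P^\sigma\otimes P$ on $\PP^1$ (or on an open subset of it). Given $F$, $\varphi$, and $\{v_P\}$ with $\sum_P v_P=0$, the lattice vectors $v_P$ assemble into the principal divisor of a rational function on $\PP^1$, and the pullback of this function provides a canonical $T$-equivariant automorphism of $X(\D_\sigma)$ realizing the translation $\Delta_P^\sigma\mapsto \Delta_P^\sigma+v_P$. Composing with the isomorphism induced by $F$ and $\varphi$ yields isomorphisms $X(\D_\sigma)\to X(\D'_{F(\sigma)})$ that are compatible with the gluing along faces, and hence glue to a global isomorphism $X(\fan)\to X(\fan')$.

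For the ``only if'' direction I would let $\psi:X(\fan)\xrightarrow{\sim}X(\fan')$ be a given isomorphism. After conjugating by an automorphism of the acting torus we may assume $\psi$ is equivariant with respect to an isomorphism inducing $F:N\to N'$. The rational quotient $\pi$ is canonical (for instance it is the rational map to $\spec$ of the subfield of $T$-invariants, which is a purely transcendental extension of transcendence degree one), so $\psi$ descends to an isomorphism $\varphi:\PP^1\to\PP^1$ of the bases. Over each chart associated with a maximal cone $\sigma$, the Altmann--Hausen uniqueness theorem for polyhedral divisors then gives lattice vectors $v_{P,\sigma}\in N$ realizing $F(\varphi^*\D_\sigma)\sim \D'_{F(\sigma)}$ and satisfying $\sum_P v_{P,\sigma}=0$.

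The main obstacle, and the only substantive step beyond quoting the affine case, is to show that the vectors $v_{P,\sigma}$ can be chosen independently of $\sigma$. This is where the global structure on $\PP^1$ enters: on the overlap of two affine charts $X(\D_{\sigma_1})$ and $X(\D_{\sigma_2})$, the two choices $v_{P,\sigma_1}$ and $v_{P,\sigma_2}$ define the same local equivalence, so their difference is given by the restriction of a principal divisor that is trivial on the overlap. Using connectedness of $\PP^1$ and the fact that $H^0(\PP^1,\CO^*)=\CC^*$, such a family of local discrepancies must come from a single global principal divisor on $\PP^1$; absorbing this into the $v_P$'s gives one set of lattice vectors $\{v_P\}_{P\in\PP^1}$ with $\sum_P v_P=0$ witnessing $F(\varphi^*\fan)\sim\fan'$.
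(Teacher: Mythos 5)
The paper itself offers no proof of this proposition; it is quoted from \cite[Section~5.3]{tvars}, so there is no internal argument to compare yours against line by line. Your outline---reduce to the Altmann--Hausen description of equivariant isomorphisms of the affine pieces attached to the maximal tail cones, then globalize over the invariant affine cover---is the standard route, and the step you single out as the substantive one (making the shift vectors $v_{P,\sigma}$ independent of $\sigma$) is indeed the crux. Three places are looser than they should be. First, in the ``if'' direction the data $\{v_P\}$ with $\sum_P v_P=0$ does not assemble into the divisor of a single rational function: it corresponds to an element $f\in N\otimes_\ZZ\CC(\PP^1)^*$, i.e.\ a compatible family $f_u$ with $\divisor(f_u)=\sum_P\langle u,v_P\rangle P$ for $u\in M$ (existence uses $\pic^0(\PP^1)=0$), and it is the twist by this $N$-valued function that realizes the translation of the slices. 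Second, in the ``only if'' direction the cleanest way to get $\sigma$-independence is not a patching argument with ``local principal divisors'': since $X(\fan)$ is irreducible, any two invariant charts meet in a dense open set, and the twist implementing $\psi$ on a chart is already determined by the induced $M$-graded isomorphism of function fields over $\CC(\PP^1)$, which is the same for every chart; equivalently, the discrepancy between two local twists is an invariant unit on a dense invariant open set, hence a constant in $N\otimes\CC^*$, so its divisor vanishes and the shift vectors agree on the nose. Your phrasing (``comes from a single global principal divisor'') points in this direction but is not yet the statement you need, namely literal equality of the local shift divisors. Third, equivalence of f-divisors also demands $\sdeg=\sdeg'$, and your sketch never verifies that marked cones go to marked cones; this is true and not hard (for a marked $\sigma$ the degree-zero part of the chart's coordinate ring is just $\CC$, while unmarked cones give charts fibred over an affine curve, and this dichotomy is intrinsic, after which $\sum_P(\Delta^\sigma_P+v_P)=\sdeg$ because $\sum_P v_P=0$), but it is part of the assertion and must be said. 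With these repairs your proposal is a correct reconstruction of the argument the paper delegates to the literature.
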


By considering the case $\fan=\fan'$ we even obtain a description of the group of equivariant automorphisms $\Aut_T(X)$.  Therefore, we consider the set $\Aut(\fan)$ of pairs $(F,\varphi)$ as above, such that $F(\varphi^*\fan) \sim \fan$ holds. This set becomes a group via $(F,\varphi) \circ (F',\varphi'):=(F \circ F',\; \varphi' \circ \varphi).$
Note, that there is a natural action of $\Aut(\fan)$ on the characters $M$ and on $\PP^1$, by considering only the first or only the second component of the pairs, respectively.

Now, we have the following proposition, due to  \cite[Section~5.3]{tvars}.
\begin{proposition}
\label{prop:symmetries1}
  The group $\Aut(\fan)$ is isomorphic to $\Aut_T(X(\fan))/T$. 
\end{proposition}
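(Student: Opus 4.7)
The plan is to view both directions of the claimed isomorphism as specializations of Proposition~\ref{prop:iso-global} to the case $\fan'=\fan$, together with the naturality of the construction $\fan\mapsto X(\fan)$. Given $(F,\varphi)\in\Aut(\fan)$, Proposition~\ref{prop:iso-global} produces an equivariant automorphism $\alpha(F,\varphi)\in\Aut_T(X(\fan))$; however, this $\alpha$ depends on a choice of lattice translations $(v_P)$ with $\sum_P v_P=0$ witnessing $F(\varphi^*\fan)\sim\fan$. The first step is to verify that two such choices differ by post-composition with a torus element, so that the class $[\alpha(F,\varphi)]$ in $\Aut_T(X(\fan))/T$ is well-defined. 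Compatibility with the composition rule $(F,\varphi)\circ(F',\varphi')=(FF',\varphi'\varphi)$ is then a direct unwinding of the definitions, yielding a group homomorphism $\Psi:\Aut(\fan)\to\Aut_T(X(\fan))/T$.

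For the inverse, any $\alpha\in\Aut_T(X(\fan))$ commutes with $T$, hence descends to an automorphism $\varphi$ of the rational quotient $\pi:X(\fan)\dashrightarrow\PP^1$, while conjugation by $\alpha$ induces an automorphism of $T$ and so, dually, a lattice automorphism $F$ of $N$. Since the equivalence class of $\fan$ is intrinsically recoverable from the pair (T-variety, rational quotient to $\PP^1$), which is the content of the correspondence in \cite[Section~5.3]{tvars}, the resulting pair $(F,\varphi)$ must satisfy $F(\varphi^*\fan)\sim\fan$, that is, it lies in $\Aut(\fan)$. Elements of $T$ act trivially both on characters and on the base $\PP^1$, so this construction factors through the quotient by $T$ and provides a candidate inverse to $\Psi$.

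To finish, I would check these two maps are mutually inverse. Surjectivity of $\Psi$ is immediate from the construction of the inverse. For injectivity, if $\Psi(F,\varphi)=[\id]$ then some lift $\alpha(F,\varphi)$ lies in $T$, hence acts trivially on $M$ and on the rational quotient, forcing $F=\id$ and $\varphi=\id$. I expect the principal technical obstacle to be the well-definedness step, i.e.\ matching the combinatorial ambiguity in the tuples $(v_P)$ with $\sum_P v_P=0$ to the $T$-action on $X(\fan)$; this requires unwinding the explicit construction of $X(\fan)$ from \cite[Section~5.3]{tvars} and identifying simultaneous translations of the slices with global torus automorphisms of the resulting variety. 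Once this identification is established, the rest of the argument is a formal verification.
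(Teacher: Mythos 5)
The paper gives no argument of its own here: the proposition is quoted directly from \cite[Section~5.3]{tvars}, so your sketch can only be judged on its own terms. Its overall plan (specialize Proposition~\ref{prop:iso-global} to $\fan'=\fan$, check well-definedness modulo $T$, and invert via the induced actions on $\PP^1$ and on the torus) is the natural one, but two steps are off target. First, elements of $\Aut_T(X(\fan))$ do not commute with $T$; they merely normalize it. If they commuted, conjugation would be trivial on $T$, forcing $F=\id$, and your map could never reach pairs such as $(-\id_N,\id_{\PP^1})$, which the paper needs (Example~\ref{sec:ex-symmetry}); the sentence following the proposition, about the conjugation action on $M$, only makes sense for the normalizer. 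The same confusion leaves your first direction incomplete: for $F\neq\id$ the isomorphism supplied by Proposition~\ref{prop:iso-global} is equivariant only up to the torus automorphism induced by $F$, and you must say why such a map belongs to $\Aut_T(X(\fan))$ at all.

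Second, you place the entire ambiguity of $\alpha(F,\varphi)$ in the choice of the shift vectors $(v_P)$. But these are essentially unique: a finite complete subdivision possessing a vertex (as all slices here do) satisfies $\fan_P+w=\fan_P$ only for $w=0$, so varying $(v_P)$ produces no torus ambiguity. The $T$-ambiguity lives instead in the non-uniqueness of the isomorphism realizing a \emph{fixed} equivalence $F(\varphi^*\fan)\sim\fan$: concretely, in the choice of an element of $N\otimes K(\PP^1)^*$ with prescribed divisor $\sum_P v_P\otimes P$, which is determined only up to $N\otimes\CC^*=T$. Equivalently, the genuine content of the proposition is the kernel statement that an automorphism normalizing $T$ and inducing the identity on $M$, on $\PP^1$ and on the slices is an element of $T$; this is exactly what your well-definedness, injectivity and mutual-inverse checks require. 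Your proposal defers precisely this point (``unwinding the explicit construction''), and the guiding picture you give for it --- ``identifying simultaneous translations of the slices with global torus automorphisms'' --- is not what happens, since nonzero simultaneous translations never fix the slices. Given that the paper itself only cites \cite{tvars}, leaning on that reference for the kernel statement would be legitimate, but as written the central step of your argument is both misidentified and unproved.
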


Moreover, this isomorphism  preserves the natural action of $\Aut_T(X(\fan))/T$ on $M$ by conjugation. Hence, we will identify $\Aut_T(X(\fan))/T$ and $\Aut(\fan)$.

\begin{definition}
  A $T$-variety of complexity one is called \emph{symmetric} if  there is a finite subgroup $G \subset \Aut_T(X(\fan))$, with $M^G=\{0\}$.
\end{definition}

\begin{lemma}
\label{lem:finite-symmetries}
  Consider a complete rational variety $X$ with torus action of complexity $\leq 1$. Let $T$ be the maximal torus of the automorphisms. Then $\Aut_T(X)/T$ is finite and $\Aut_T(X)$ is reductive.
\end{lemma}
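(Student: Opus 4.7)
The plan combines the combinatorial dictionary of Proposition~\ref{prop:symmetries1} with the hypothesis that $T$ is a maximal torus in $\Aut(X)$. In the complexity-one case we have $\Aut_T(X)/T \cong \Aut(\fan)$, realised as the subgroup of pairs $(F,\varphi)\in \mathrm{GL}(N)\times\Aut(\PP^1)$ with $F(\varphi^{*}\fan)\sim \fan$. The complexity-zero case is classical for complete toric varieties and handled by the same argument with the $\varphi$-component absent, so the task reduces to showing $\Aut(\fan)$ is finite.

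I would first deal with the projection $\Aut(\fan)\to\mathrm{GL}(N)$, $(F,\varphi)\mapsto F$: its image lies inside the lattice automorphisms preserving the complete tail fan $\tail\fan$, and the symmetry group of a complete fan is finite. It thus suffices to show that the kernel $K$ of this projection is finite. An element of $K$ has the form $(\id,\varphi)$, and the compatibility condition $F(\varphi^{*}\fan)\sim\fan$ forces $\varphi$ to permute the finite set $\Lambda\subset\PP^1$ of points carrying non-trivial slices.

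The main step is a dichotomy on $|\Lambda|$. If $|\Lambda|\ge 3$, the setwise stabiliser of $\Lambda$ in $\mathrm{PGL}_2=\Aut(\PP^1)$ is finite, so $K$ is finite and we are done. I expect the hard part to be $|\Lambda|\le 2$, where the stabiliser of $\Lambda$ in $\Aut(\PP^1)$ contains a one-parameter subgroup $H\cong\CC^*$ fixing each point of $\Lambda$ pointwise; any such $\varphi$ preserves every slice as a set, so $(\id,\varphi)\in K$ for all $\varphi\in H$, giving an embedding $H\hookrightarrow K$. Its pre-image $\widetilde H\subset\Aut_T(X)$ is then a connected commutative algebraic group fitting into the short exact sequence $1\to T\to\widetilde H\to H\to 1$ (commutativity comes from $T$ being central in $\widetilde H$ together with commutativity of the quotient). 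Writing $\widetilde H=T'\times U$ with $T'$ a torus and $U$ the unipotent radical, one sees $T\subset T'$, and then $(T'/T)\times U\cong\CC^*$ forces $U=1$ and $\dim T'=\dim T+1$. Hence $\widetilde H=T'$ is a torus strictly containing $T$, contradicting the maximality of $T$ in $\Aut(X)$, so this case cannot occur.

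Once $\Aut_T(X)/T$ is known to be finite, $\Aut_T(X)^{\circ}=T$ is a torus; thus $\Aut_T(X)$ is a finite extension of a torus, has trivial connected unipotent radical, and is therefore reductive.
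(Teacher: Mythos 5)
Your proof is correct and follows essentially the same route as the paper: reduce to the combinatorial group $\Aut(\fan)$, note that the $\mathrm{GL}(N)$-component is constrained by the finite symmetry group of the complete tail fan, that three or more non-trivial slices rigidify the $\Aut(\PP^1)$-component, and that at most two non-trivial slices would yield a torus strictly larger than $T$ inside $\Aut(X)$, contradicting maximality; reductivity then follows since $\Aut_T(X)^\circ=T$ (the paper instead invokes a Levi decomposition, but the content is the same). The only soft spot is your justification of the commutativity of $\widetilde H$ --- a central subgroup with abelian quotient does not force commutativity in general (Heisenberg group) --- but this is harmless: rigidity of tori makes the conjugation action of the connected group $\widetilde H$ on $T$ trivial and the induced commutator pairing $\widetilde H/T\times\widetilde H/T\to T$ trivial, and in any case $\widetilde H$ being connected and solvable already gives the decomposition $T'\ltimes U$ needed for your contradiction.
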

\begin{proof}
  If $X$ is a toric variety the statement is well known. Hence, we may consider the case of a complexity-one torus action. We consider the f-divisor $\fan$ with $X=X(\fan)$ with non-trivial slices $\fan_{P_1}, \ldots, \fan_{P_\ell}$. All other slices are assumed to be just lattice translates of the tail fan. Assume that $\ell=2$ then we may assume, that $P_1=0$ and $P_2=\infty$. Now, every element $\varphi \in \CC^* \subset \Aut(\PP^1)$ will give rise to a pair $(\id, \varphi) \in \Aut(\fan)$. Hence, $T$ was not a maximal torus and $X$ is just a toric variety with an restricted torus action. Further on we may assume that $\ell \geq 3$. 

Consider an element $(F,\varphi)$ of $\Aut(\fan)$. Since, $F$ has to be an automorphism of the tail fan, there are only finitely many choices for $F$. Since, 
$\ell \geq 3$ the automorphism $\varphi$ of $\PP^1$ is induced by a permutation of $P_1, \ldots, P_\ell$. Hence, there are only finitely many choices of for $\varphi$, as well.

Since, $X$ is complete $\Aut_T(X)$ is an algebraic group and we have a Levi decomposition $\Aut_T(X) = H \ltimes R_u$, where $H$ is reductive and $R_u$ is the unipotent radical. We may assume that $T \subset H$. Hence, we obtain $\Aut_T(X)/T \cong H/T  \times R_u$. By the finiteness of $\Aut_T(X)/T$ we conclude that $R_u$ is trivial and $\Aut_T(X)$ is reductive.
\end{proof} 

\begin{proposition}
\label{prop:symmetries2}
  Consider an f-divisor $\fan$ on $\PP^1$ with at least three non-trivial slices. Then $X = X(\fan)$ is symmetric if and only if $M^{\Aut(\fan)}=0$ holds.
\end{proposition}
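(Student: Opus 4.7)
The plan is to transport the condition through the isomorphism $\Aut_T(X(\fan))/T \cong \Aut(\fan)$ from Proposition~\ref{prop:symmetries1}. Under the hypothesis of at least three nontrivial slices, Lemma~\ref{lem:finite-symmetries} gives that $\Aut(\fan)$ is finite and $\Aut_T(X)$ is reductive. A crucial preliminary observation is that $T$, being abelian, acts trivially by conjugation on its own character lattice $M$, so the conjugation action of $\Aut_T(X)$ on $M$ factors through the quotient map onto $\Aut_T(X)/T \cong \Aut(\fan)$. Consequently, for any subgroup $G \subseteq \Aut_T(X)$ with image $\bar G$ in $\Aut(\fan)$ we have $M^G = M^{\bar G}$.

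The forward implication is then immediate: a finite subgroup $G \subseteq \Aut_T(X)$ with $M^G = 0$ has image $\bar G \subseteq \Aut(\fan)$ satisfying $M^{\bar G} = 0$, and therefore $M^{\Aut(\fan)} \subseteq M^{\bar G} = 0$.

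For the reverse implication, assuming $M^{\Aut(\fan)} = 0$, I would exhibit a finite subgroup $G \subseteq \Aut_T(X)$ whose image in $\Aut(\fan)$ is all of $\Aut(\fan)$; then $M^G = M^{\Aut(\fan)} = 0$ by the observation above, so $X$ is symmetric. We have a short exact sequence of algebraic groups
\[1 \to T \to \Aut_T(X) \to \Aut(\fan) \to 1\]
with finite quotient and divisible kernel. For each $\gamma \in \Aut(\fan)$ of order $n$, any preimage $\tilde \gamma \in \Aut_T(X)$ satisfies $\tilde \gamma^n \in T$, and divisibility of $T$ allows us to multiply $\tilde \gamma$ by a suitable element of $T$ so that the modified lift is torsion of order dividing $n$. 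Repeating this for a finite generating set of $\Aut(\fan)$ and taking the subgroup they generate produces the desired $G$.

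The main obstacle is precisely this last step: lifting each generator individually is a one-line divisibility argument, but controlling the intersection $G \cap T$ so that $G$ itself remains finite requires more care, since relations in $\Aut(\fan)$ force the commutators of our chosen lifts to sit inside $T$ and could a priori be of infinite order. A clean resolution is to work inside the torsion subgroup of $\Aut_T(X)$, which is dense and still surjects onto $\Aut(\fan)$; choosing all lifts to be torsion elements then forces $G \cap T$ to lie in the torsion of $T$ of bounded exponent (dividing a power of $|\Aut(\fan)|$), hence to be finite. Once this technicality is resolved, the rest of the argument is formal bookkeeping via Proposition~\ref{prop:symmetries1}.
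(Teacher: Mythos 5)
Your forward implication and the reduction via Proposition~\ref{prop:symmetries1} are fine, and you have correctly located the crux: producing a \emph{finite} subgroup $G\subset\Aut_T(X)$ that surjects onto $\Aut(\fan)$. But your proposed resolution of that step does not work. First, the torsion elements of $\Aut_T(X)$ do not form a subgroup (in an extension of a finite group by a torus, products of torsion elements are typically of infinite order), so "work inside the torsion subgroup" is not available. Second, and more seriously, the claim that choosing torsion lifts of a generating set forces $G\cap T$ to have bounded exponent is false: the relations of $\Aut(\fan)$ evaluated on your chosen lifts produce elements of $T$ that are in no way constrained to be torsion. For instance, in $\CC^{*}\rtimes(\ZZ/2\times\ZZ/2)$ with both generators acting by inversion, every lift $(a,\gamma_1)$, $(b,\gamma_2)$ of the two commuting involutions is itself an involution, yet their commutator is $((ab^{-1})^{2},1)\in T$, which has infinite order for generic $a,b$; similarly with $S_3$ acting through the sign character, $(\tilde s\tilde t)^{3}$ lands in $T$ with no torsion constraint. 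So arbitrary torsion lifts of generators can generate an infinite group, and your argument leaves the finiteness of $G$ unproved. (A small additional inaccuracy: a torsion lift of an order-$n$ element need not have order $n$; in the normalizer of a torus in $\mathrm{SL}_2$ every lift of the involution has order $4$.)

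The paper avoids this by not proving the lifting statement at all: it invokes Lemma~\ref{lem:finite-symmetries} to get that $\Aut(\fan)$ is finite and $\Aut_T(X)$ is reductive, and then cites a classical theorem (the reference in the proof) guaranteeing that a finite quotient of an algebraic group in characteristic zero can be lifted to a finite subgroup $G\subset\Aut_T(X)$, not necessarily isomorphic to $\Aut(\fan)$; then $M^{G}=M^{\Aut(\fan)}=0$ exactly as in your bookkeeping. If you want a self-contained argument along your lines, the standard repair is cohomological rather than generator-by-generator: the class of the extension $1\to T\to\Aut_T(X)\to\Aut(\fan)\to1$ in $H^{2}(\Aut(\fan),T)$ is killed by $n=|\Aut(\fan)|$, and divisibility of $T$ (with finite $n$-torsion) shows this class comes from $H^{2}(\Aut(\fan),T[n])$; the corresponding subextension is a finite group mapping onto $\Aut(\fan)$, which is the $G$ you need. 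As it stands, your proof has a genuine gap at precisely the step you flagged.
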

\begin{proof}
  One direction is obvious. Now, assume that $M^{\Aut(S)} = 0$.
  By Lemma~\ref{lem:finite-symmetries} we know that $\Aut(\fan)=\Aut_T(X)/T$ is finite and $\Aut_T(X)$ is reductive. Now by the results of \cite{zbMATH03234443} we can lift $\Aut(\fan)$ to a finite subgroup $G \subset \Aut_T(X)$, where $G$ is not necessarily isomorphic to $\Aut(\fan)$. Now, we also have $M^{G}=0$.
\end{proof}

The notion of symmetry for toric varieties goes back to \cite{0939.32016} it was generalized in \cite{kesym} to give a sufficient criterion for the existence of a K\"ahler-Einstein metric on $X(\fan)$. For a vertex $v$ in $\fan_P$ let us denote by $\mu(v)$ the minimal natural number, such that $\mu(v)\cdot v$ is a lattice point. We call $\mu(v)$ the \emph{multiplicity} of $v$. For $P \in \PP^1$ we set $\mu(P)= \max\{\mu(v) \mid v \in \fan_{P}^{(0)}\}$, where $\fan_{P}^{(0)}$ denotes the set of vertices in $\fan_{P}$.
\begin{theorem}
\label{thm:kesym}
   Let $X=X(\fan)$ be a symmetric smooth Fano $T$-variety of complexity one, given by an f-divisor $\fan$. If one of the following conditions is fulfilled:
   \begin{enumerate}
   \item there are three points $P_1,P_2,P_3 \in \PP^1$ such that $\mu(P_i) > 1$ for $i=1,2,3$,\label{item:1}
  \item there are two points as in (\ref{item:1}) which are swapped by an element of $\Aut(\fan)$,
  \item $\Aut(\fan)$ acts fixed-point-free on $\PP^1$,
\end{enumerate}
then $X$ is K\"ahler-Einstein.
\end{theorem}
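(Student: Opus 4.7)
The plan is to apply Tian's criterion for the $G$-invariant $\alpha$-invariant: for a Fano threefold it suffices to produce a compact subgroup $K\subset\Aut(X)$ with $\alpha_K(X)>\nicefrac{3}{4}$. By Proposition~\ref{prop:symmetries2}, the symmetry hypothesis gives a finite $G\subset\Aut_T(X)$ with $M^G=0$, and I take $K$ to be the compact group generated by $G$ together with the maximal compact subgroup of $T$. Since $M^K=0$, every $K$-invariant character vanishes, which in particular makes the Futaki character $K$-invariantly zero; but we need the stronger estimate on the $\alpha$-invariant.

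The first technical step is to reduce the computation of $\alpha_K(X)$ to a combinatorial question on the f-divisor $\fan$. Given any $K$-invariant effective $\QQ$-divisor $D\sim_\QQ -K_X$, the standard $T$-averaging trick (taking $D_0=\lim_{t\to 0}t\cdot D$ along a generic one-parameter subgroup of $T$) replaces $D$ by a $(G\times T)$-invariant divisor without increasing its log canonical threshold. Such a $(G\times T)$-invariant divisor is a non-negative combination of the horizontal $T$-invariant prime divisors (indexed by $\Hor$) and the vertical $T$-invariant prime divisors (indexed by pairs $(P,v)$ with $v\in\fan_P^{(0)}$), with $G$-invariance amounting to orbit-constancy of the vertical coefficients along $\Aut(\fan)$-orbits in $\PP^1$.

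Next I would compute $\mathrm{lct}(X,D)$ from the combinatorial data in the spirit of \cite{kesym}. The discrepancy of the vertical prime divisor at $(P,v)$ is controlled by $\mu(v)$, so vertices with $\mu(v)>1$ are precisely the sources of strong singularities. The anti-canonical constraint $D\sim_\QQ -K_X$ together with the identity $\sum_P\Delta_P^\sigma=\sdeg\cap\sigma$ from Definition~\ref{def:f-divisor} pins down the total vertical weight, and the three hypotheses each force the vertical support of $D$ to spread over at least three points of $\PP^1$. Indeed, in case (i) this is immediate from the existence of $P_1,P_2,P_3$ with $\mu(P_i)>1$; in case (ii) the $G$-orbit of one of the two points has size $\geq 2$ and together with the third non-trivial slice (which must exist since $\sdeg\neq N_\QQ$) yields three points; in case (iii) every $\Aut(\fan)$-orbit in $\supp D$ has size $\geq 2$, and combined with a second orbit guaranteed by the Fano condition on $\sdeg$ this again produces $\geq 3$ supporting points. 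Distributing the vertical mass over $\geq 3$ points with $\mu>1$ then yields the sharp bound $\mathrm{lct}(X,D)>\nicefrac{3}{4}$ uniformly in $D$, hence $\alpha_K(X)>\nicefrac{3}{4}$ and Tian's criterion applies.

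The main obstacle is the last inequality: converting the purely combinatorial spreading of the support over $\PP^1$ into a multiplicative improvement on the lct that beats the critical value $\nicefrac{3}{4}$. This requires simultaneously exploiting the multiplicity strict inequality $\mu(v)>1$ at the relevant vertices and the strict containment $\sdeg\cap\sigma\subsetneq\sigma$ built into the Fano condition of Definition~\ref{def:f-divisor}; neither ingredient alone suffices. The general framework of \cite{kesym} carries out exactly this bookkeeping, so in practice the proof consists in verifying that our three symmetry hypotheses are of the form handled there, with the multiplicities read off directly from the slices displayed in Section~\ref{sec:pictures}.
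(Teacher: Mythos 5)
There is a genuine gap. The decisive step of your argument --- the uniform bound $\mathrm{lct}(X,D)>\nicefrac{3}{4}$ (equivalently $\alpha_K(X)>\nicefrac{n}{n+1}$; note you silently specialize to $n=3$, while the theorem is stated for any smooth Fano $T$-variety of complexity one) --- is only asserted, and you concede at the end that the actual bookkeeping must be taken from \cite{kesym}. Worse, the mechanism you propose for it cannot work in cases (ii) and (iii): your estimate rests on ``distributing the vertical mass over $\geq 3$ points with $\mu>1$,'' but conditions (ii) and (iii) do not provide three points of multiplicity $>1$ --- in case (ii) there are only two such points (merely swapped by $\Aut(\fan)$), and in case (iii) there need be \emph{no} point with $\mu(P)>1$ at all; the hypothesis is only that $\Aut(\fan)$ acts without fixed points on $\PP^1$. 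So the reduction ``each hypothesis forces the support to spread over three points where $\mu>1$'' is false as stated, and the role the multiplicities play in your sketch (as ``sources of strong singularities'' entering a discrepancy computation) is not the role they play in the cited argument, where the point of the three conditions is to prevent $K$-invariant anticanonical divisors from concentrating along special fibers of $\pi$.

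For comparison, the paper's proof is a two-line translation: the statement \emph{is} Theorem~1.1 of \cite{kesym}, whose hypotheses are phrased via the condition that $T$ acts on $\pi^{-1}(P)$ with disconnected stabilizers; the only new content is that this stabilizer condition is equivalent to $\mu(P)>1$, by \cite[Proposition~4.11]{cox}. Your proposal bypasses that dictionary (working directly with $\mu$), but then you are obliged to reprove the analytic core of \cite{kesym} rather than cite it, and that core is exactly what is missing. To repair the write-up you should either supply the $\alpha$-invariant estimate in all three cases (in particular a genuinely different argument for (ii) and (iii), which your three-point counting does not cover), or do what the paper does: quote \cite[Theorem~1.1]{kesym} and justify only the equivalence between the multiplicity condition $\mu(P)>1$ and the disconnected-stabilizer condition.
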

\begin{proof}
  This is only a reformulation of \cite[Theorem~1.1]{kesym}. The condition that $T$ acts on $\pi^{-1}(P)$  with disconnected stabilizers is replaced by our condition $\mu(P) > 1.$ But this is equivalent by \cite[Proposition 4.11]{cox}.
\end{proof}

Now, we make use of Proposition~\ref{prop:symmetries2} to check the symmetry of a variety $X(\fan)$. 

\begin{example}[{Fano threefold 3.10 (continued)}]
\label{sec:ex-symmetry}
  We reconsider the f-divisor and variety from Example~\ref{ex:threefold3.10-1}.
  By setting $v_0=(-1,0)$, $v_1=(-1,0)$, $v_\infty=(1,1)$ and $v_P=0$ for all other points $P \in \PP^1$ we see that $\fan \sim -\fan$. Hence, the pair $(-\id_N, \id_{\PP^1})$ is an element of $\Aut(\fan)$. Hence, $X(\fan)$ is symmetric, since $-\id_N$ has only $0$ as a fixed point. Moreover, in the slices $\fan_0, \fan_\infty, \fan_1$ there are vertices of multiplicity $2$. By Theorem~\ref{thm:kesym}~(\ref{item:1}) this shows, that this Fano threefolds is actually K\"ahler-Einstein.
\end{example}

\goodbreak

\section{Torus invariant divisors}
\label{sec:torus-invariant-divisors}
In this section we recall the results of \cite{tidiv} concerning Weil and Carier divisors on T-varieties, see also \cite{tvars,IS10}.
\subsection*{The class group}
On a T-variety $X(\fan)$ we find two different types of torus invariant prime divisors. Every vertex $v$ in $\fan_P$ correspond to a so called \emph{vertical prime divisors} $V_{v}:=V_{P.v}$ which projects to the point $P\in \PP^1$, via the quotient map $\pi$. We denote the set of all vertices in $\fan_P$ by $\Vert_P$ and the disjoint union $\coprod_P \Vert_P$ by $\Vert$. As before, for a vertex $v \in \Vert_P$ we denote by $\mu(v)$ the smallest positive integer $\mu$ such that $\mu \cdot v \in N$. \emph{Horizontal  prime divisors} $H_\rho$ project surjectively onto $\PP^1$ via the quotient map $\pi$. They correspond to the rays $\rho$ in $\tail \fan$ which do not intersect $\sdeg$. We denote the set of all such rays by $\Hor$.

\begin{theorem}[{\cite[Cor.~3.15]{tidiv}}]\label{thm:divclass}
  The divisor class group of $X(\fan)$ is isomorphic to
  \[\bigoplus_{\rho \in \Hor} \ZZ \cdot H_\rho \oplus \bigoplus_{v \in \Vert} \ZZ \cdot  V_{v}\] modulo the relations
\begin{eqnarray*}
 \sum_{v \in \Vert_P} \mu(v)V_v &=& \sum_{v \in \Vert_Q} \mu(v)V_v,\\
 0  &=& \sum_{\rho}  \langle u,\rho \rangle H_\rho  + \sum_{P, v} \mu(v) \langle u,v \rangle V_{v}\,.
\end{eqnarray*}
where $P,Q \in \PP^1$ and $u \in M$.
\end{theorem}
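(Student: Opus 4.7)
The plan is to obtain the class group as the cokernel of a map sending the character lattice and the rational-function relations on $\PP^1$ to the free abelian group on the T-invariant prime divisors, in the usual style for computing class groups of varieties with torus action.

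First I would establish that the T-invariant prime divisors $\{H_\rho\}_{\rho \in \Hor}$ and $\{V_v\}_{v \in \Vert}$ generate $\Cl(X(\fan))$. Let $X_0 \subset X(\fan)$ be the complement of the union of these divisors. By construction $X_0$ is T-stable and the rational quotient $\pi$ restricts to a surjection $X_0 \twoheadrightarrow U$, where $U \subset \PP^1$ is the complement of the finitely many points with non-trivial slice together with the finitely many $\pi$-images of components of the $H_\rho$; in particular $U$ is affine. Over $U$ the map is a principal T-bundle (no T-fixed divisorial data remains), so $X_0$ has trivial class group. By the standard excision sequence
\[
\bigoplus_{\rho \in \Hor} \ZZ \cdot H_\rho \;\oplus\; \bigoplus_{v \in \Vert} \ZZ \cdot V_v \;\longrightarrow\; \Cl(X(\fan)) \;\longrightarrow\; \Cl(X_0) \;\longrightarrow\; 0,
\]
the left map is surjective.

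Next I would identify the kernel as the group of principal divisors of T-semi-invariant rational functions. Any relation among the generators is witnessed by a rational function $f \in k(X(\fan))^*$ whose divisor is T-invariant; averaging over T shows $f$ may be taken to be T-semi-invariant, and hence is of the form $g \cdot \chi^u$ with $u \in M$ and $g \in k(\PP^1)^*$ (since $k(X(\fan))^T = k(\PP^1)$). It then suffices to compute the divisors of the two building blocks. For the character I claim
\[
\divisor(\chi^u) \;=\; \sum_{\rho \in \Hor} \langle u,\rho\rangle H_\rho \;+\; \sum_{P,\,v \in \Vert_P} \mu(v)\,\langle u,v\rangle V_v,
\]
which reproduces the second relation; the horizontal contribution is the toric formula applied to $\tail\fan$, while for a vertex $v$ the local chart is the affine T-variety attached to the polyhedron at $v$, in which the order of $\chi^u$ along $V_v$ equals $\mu(v)\langle u,v\rangle$ (the $\mu(v)$ accounts for the fact that $v$ itself need not be a lattice point, so the primitive evaluation is $\mu(v)\langle u,v\rangle$). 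For a rational function $g$ pulled back from $\PP^1$,
\[
\divisor(\pi^* g) \;=\; \sum_P \ord_P(g)\,\pi^*(P) \;=\; \sum_P \ord_P(g) \sum_{v \in \Vert_P} \mu(v)\, V_v,
\]
where the equality $\pi^*(P) = \sum_{v \in \Vert_P} \mu(v) V_v$ follows from the local structure of the quotient map near a vertex. Since $\sum_P \ord_P(g) = 0$ on $\PP^1$, the subgroup generated by such divisors is spanned exactly by the differences $\sum_{v \in \Vert_P}\mu(v)V_v - \sum_{v \in \Vert_Q}\mu(v)V_v$, which is the first relation.

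Combining the two computations gives a presentation of $\Cl(X(\fan))$ with precisely the stated generators and relations. The main obstacle is verifying the local multiplicity formulas, in particular that $\ord_{V_v}\chi^u = \mu(v)\langle u,v\rangle$ and $\pi^*(P) = \sum_v \mu(v) V_v$; both reduce to a direct inspection of the affine charts of $X(\fan)$ via the polyhedral divisors associated to maximal cones of $\tail \fan$, as worked out in \cite{tidiv}. Everything else is formal manipulation of the excision sequence.
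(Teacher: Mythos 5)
The paper itself offers no proof of this theorem; it is quoted from \cite[Cor.~3.15]{tidiv}, and your outline follows the same route as that reference: show that the invariant prime divisors generate $\cl(X(\fan))$, and identify the kernel with divisors of $T$-semi-invariant functions $g\cdot\chi^u$, whose explicit divisor formula produces exactly the two families of relations (taking $g$ with divisor $P-Q$ and $u=0$ gives the first, $g=1$ the second). The kernel half of your sketch is sound: semi-invariance of a witness $f$ follows because $t^*f/f$ is a global unit on the complete normal $X$, hence a constant depending multiplicatively on $t$, and principal divisors on $\PP^1$ are precisely the degree-zero ones. Be aware, though, that the two local multiplicity statements you single out, $\ord_{V_v}\chi^u=\mu(v)\langle u,v\rangle$ and $\pi^*P=\sum_{v\in\Vert_P}\mu(v)V_v$, are exactly the content of the divisor formula in \cite{tidiv}, so deferring them back to that paper makes the argument circular if it is meant to be independent of the source being cited.

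The genuine gap is in the excision step. With the paper's conventions, $\Vert=\coprod_P\Vert_P$ contains the vertex $0$ of every trivial slice, so $V_{P,0}$ is the divisorial part of the fibre over \emph{every} $P\in\PP^1$; the union of all the $V_v$ is therefore dense in $X$, its complement is not a dense open subset, and the localization sequence cannot be applied to it as you state. Moreover, the $H_\rho$ dominate $\PP^1$ (the paper says this explicitly), so ``the finitely many $\pi$-images of components of the $H_\rho$'' are not points: removing the $H_\rho$ does not shrink the base at all, and what is left of a generic fibre after removing them is the fibre minus only those of its invariant divisors coming from rays in $\Hor$, which is in general not a principal $T$-bundle over $U$. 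The standard repair is to choose an affine open $U\subset\PP^1$ avoiding all non-trivial slices and take $X_0\cong T\times U$, the union of the generic orbits over $U$; then $\cl(X_0)=0$, the divisorial components of $X\setminus X_0$ are the $H_\rho$ together with the $V_{P,v}$ for $P\notin U$ (the locus where $T\times U$ fails to exhaust $\pi^{-1}(U)$ minus the $H_\rho$ has codimension at least two, so it is harmless), and the units of $T\times U$ are $k[U]^*\cdot\chi^M$, giving the relations. This yields the theorem for the smaller generating set; to arrive at the stated presentation you must then formally re-insert the generators $V_{P,0}$ for $P\in U$ together with the corresponding fibre relations and check that the quotient is unchanged -- routine, but it has to be said, since as written your open set and bundle claim would fail.
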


\begin{corollary}
\label{sec:cor-picard-rank}
The class group of $X$ has rank
\[
1 + \sum_P (\#\Vert_P -1) + \#\Hor - \dim N.
\]
\end{corollary}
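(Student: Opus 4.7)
The plan is to read off the rank directly from the presentation of $\cl(X)$ in Theorem~\ref{thm:divclass}. The one subtlety is that the generators and the first family of relations are both indexed by the infinite set $\PP^1$, so one must first reduce to a finite presentation. For any $P \in \PP^1$ with trivial slice $\fan_P = \tail\fan$ the only vertex is the origin, with $\mu(0)=1$, so $\#\Vert_P=1$ and the term $\#\Vert_P-1$ vanishes. Let $P_1,\ldots,P_k$ denote the finitely many points where the slice is non-trivial.

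Applied between two trivial slices, the first-type relation reads $V_{P,0}=V_{Q,0}$, identifying all trivial-slice generators with a single class $V^{\mathrm{triv}}$. Applied between a trivial slice and $P_i$, it becomes $V^{\mathrm{triv}}=\sum_{v\in\Vert_{P_i}}\mu(v)V_v$ for $i=1,\ldots,k$, while the relation between two non-trivial slices is implied by these. After these identifications the class group admits a finite presentation with $\#\Hor + \sum_{i=1}^k \#\Vert_{P_i} + 1$ generators and $k$ surviving first-type relations. Because $\mu(0)\langle u,0\rangle=0$, the trivial slices drop out of the second-type relation, which then reads
\[\sum_{\rho\in\Hor}\langle u,\rho\rangle H_\rho \;+\; \sum_{i=1}^{k}\sum_{v\in\Vert_{P_i}}\mu(v)\langle u,v\rangle V_v \;=\; 0\]
for each $u\in M$, and I will argue that these contribute $\dim N$ further independent relations. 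A subtraction then yields
\[\rk \cl(X) = \#\Hor + \sum_{i=1}^k\#\Vert_{P_i} + 1 - k - \dim N = 1 + \#\Hor + \sum_{P}(\#\Vert_P-1) - \dim N,\]
as claimed.

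The main obstacle is establishing the independence of the second-type relations, that is, the injectivity of the map $M \to \bigoplus_\rho \ZZ H_\rho \oplus \bigoplus_{i,v}\ZZ V_v$ defined above. For any $u\neq 0$, since the tail fan of a complete f-divisor is complete, its rays span $N_\QQ$, and some tail ray $\rho$ satisfies $\langle u,\rho\rangle\neq 0$; if $\rho\in\Hor$ the coefficient of $H_\rho$ is non-zero, and otherwise $\rho$ meets $\sdeg$, in which case the f-divisor condition $\sum_P\Delta^\sigma_P = \sdeg\cap\sigma$ from Definition~\ref{def:f-divisor} forces a vertex of some non-trivial slice to lie on $\rho$, producing a non-zero $V_v$-coefficient.
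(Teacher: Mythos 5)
Your overall strategy — collapse the trivial slices to a single class $V^{\mathrm{triv}}$, keep the $k$ surviving slice-comparison relations and the character relations, and count — is the natural way to extract the corollary from Theorem~\ref{thm:divclass} (the paper itself gives no argument). But the final subtraction silently assumes that the $k$ first-type relations and the character relations are \emph{jointly} independent, i.e.\ that the relation lattice has rank exactly $k+\dim N$; independence of the character relations among themselves is not enough, since a dependence could also involve the first-type relations, in which case the rank of $\cl(X)$ would exceed the stated formula. Comparing coefficients, what has to be excluded is a nonzero $u\in M_\QQ$ together with rationals $c_1,\ldots,c_k$ with $\sum_i c_i=0$ such that $\langle u,\rho\rangle=0$ for every $\rho\in\Hor$ while $\langle u,\cdot\rangle\equiv c_i$ on $\Vert_{P_i}$ for each $i$. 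Your argument only addresses the special case $c_1=\cdots=c_k=0$.

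Moreover, the step you give even in that special case is not valid: from $\rho\cap\sdeg\neq\emptyset$ and $\sum_P\Delta_P^\sigma=\sdeg\cap\sigma$ one cannot conclude that a vertex of a non-trivial slice lies on $\rho$. Vertices of a Minkowski sum decompose into vertices of the summands, but those summand vertices need not lie on $\rho$: take the tail fan of the four quadrants, two non-trivial slices equal to the tail fan translated by $(2,-1)$ and $(-1,1)$, and $\sdeg$ equal to $(1,0)+\sigma$ on the two quadrants containing $(1,0)$ and empty on the others; the ray through $(1,0)$ meets $\sdeg$, yet the only slice vertices are $(2,-1)$ and $(-1,1)$, the degree vertex $(1,0)$ decomposing as $(2,-1)+(-1,1)$. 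Both gaps are closed at once by recalling where the relations of Theorem~\ref{thm:divclass} come from: they are the divisors $\divisor(f\chi^u)$ of semi-invariant rational functions, the first type from functions pulled back from $\PP^1$, the second from characters. A rational dependence among the reduced relations would (after clearing denominators) produce $f\in\CC(\PP^1)^*$ and $u\in M$ with $\divisor(f\chi^u)=0$ on the complete variety $X(\fan)$, hence $f\chi^u$ constant, hence $u=0$ and all $c_i=0$. If you prefer a purely combinatorial route, you can first shift the slices by vectors $w_i$ with $\langle u,w_i\rangle=c_i$ and $\sum_i w_i=0$ (possible precisely because $\sum_i c_i=0$, and harmless because $\sdeg$ and $\Hor$ are unchanged) to reduce to the case $c_i=0$, but the remaining injectivity still needs a correct argument in place of the vertex-on-$\rho$ claim.
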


\begin{example}[{Fano threefold 3.10 (continued)}]
  \label{exp:canonical}
  We are coming back to our T-variety from
  Example~\ref{ex:threefold3.10-1}. Since every ray in $\tail \fan$ intersects the degree, by Theorem~\ref{thm:divclass} the
  divisor class group of $X$ is generated by the seven vertical
  divisors
  \[
  \begin{array}{lll}
    D_1=V_{0,(0,0)}, &D_2=V_{0,(-\nicefrac{1}{2},0)}, &D_3=V_{0,(-1,0)},\\
    D_4=V_{\infty,(0,0)}, &D_5=V_{\infty,(0,-\nicefrac{1}{2})}, &D_6=V_{\infty,(0,-1)},\\
    &D_7=V_{1,(\nicefrac{1}{2},\nicefrac{1}{2})}.&\\
  \end{array}
  \]
  The relations are given by the rows of the following matrix
  \[
  M=\left(\begin{smallmatrix}
    1 & 2 & 1 & 0 & 0 & 0 &  -2 \\
    0 & 0 & 0 & 1 & 2 & 1 & -2 \\
    -1 & -1 & 0 & 0 & 0 & 0 &  1 \\
    0 & 0 & 0 & 0 & -1 & -1 & 1
  \end{smallmatrix}\right)
  \]
  Calculating the Smith normal form $D=P\cdot M \cdot Q$ gives
  \[
  D=\left(\begin{smallmatrix}
    1& 0& 0& 0& 0& 0& 0\\
    0& 1& 0& 0& 0& 0& 0\\
    0& 0& 1& 0& 0& 0& 0\\
    0& 0& 0& 1& 0& 0& 0
  \end{smallmatrix}\right), \quad P=I_4, \quad Q=
   \left(\begin{smallmatrix}
     0& 0& {-1}& 1&{-1}& 1& 1\\
     0& 0& 0& 0& 1& 0&0\\
    1& 0& 1& 1& {-1}& 1&1\\
    0& 1& 0& {2}& 0& 0&1\\
    0& 0& 0& 0& 0& 1&0\\
    0& 0& 0& 0& 0& 0&1\\
    0& 0& 0& 1& 0& 1&1\\
  \end{smallmatrix}\right)
  \]
Hence, $\cl(X)\cong \ZZ^3$. Where the identification is given by the last three columns of $Q$, i.e.  $[D_1], [D_3] \mapsto (-1,1,1)$, $[D_2]\mapsto (1,0,0)$, $[D_4] \mapsto (0,0,1)$, $[D_5]\mapsto (0,1,0)$,  $[D_6] \mapsto (0,0,1)$, $[D_7]\mapsto (0,1,1)$.
\end{example}

\subsection*{Ample divisors}
We consider a piecewise affine concave function \(\Psi: \Box \rightarrow \wdiv_\RR \PP^1,\) defined on a lattice polytope $\Box \subset M_\RR$. It induces corresponding  piecewise affine  and concave functions $\Psi_P:\Box \to \RR$  via $\Psi(u) = \sum_{P \in \PP^1} \Psi_P(u) \cdot P$. Using this notation we are going to define the crucial object of this section.
\begin{definition}
  A \emph{divisorial polytope} is a continuous piecewise affine concave function 
\(\Psi: \Box \rightarrow \wdiv_\RR \PP^1,\) such that
\begin{enumerate}
\item for every $u$ in the interior of $\Box$, $\deg \Psi(u) > 0$,
\item for every $P$ the graph of $\Psi_P$ is integral, i.e. has its vertices in $M \times \ZZ$.
\end{enumerate}

\noindent We define the \emph{support function} of $\Psi$ to be the family $\Psi^*=\{\Psi^*_P\}_{P\in \PP^1}$ of piecewise affine concave functions $\Psi^*_P: N_\RR \rightarrow \RR$ with   
 \[\Psi^*_P:=\min_{u \in \Box_h}(\langle u ,v  \rangle -\Psi_P(u)).\]
We call $\lin(\Psi^*):=\min \langle \Box, \cdot \rangle$ the linear part of $\Psi^*$. It is a piecewise linear function on the normal fan of $\Box$.
\end{definition}

\begin{proposition}[{\cite[Theorem~63]{tvars}, \cite[Theorem~3.2]{IS10}}]
\label{prop:divpolys}
  There is a one-to-one correspondence between invariant ample divisors $D$ on $X(\fan)$ and divisorial polytopes $\Psi:\Box \rightarrow \wdiv_\RR \PP^1$, with
  \begin{enumerate}
  \item $\Psi^*_P$ induces the subdivision $\fan_P$, i.e. the maximal polyhedra of $\fan_P$ are the regions of affine linearity of $\Psi^*_P$,
  \item $\deg \Psi(u)=0$ holds for a point $u \in \Box_D$ iff $\langle u, v\rangle = \min \langle \Box ,v \rangle$ for some $v \in\sdeg$\label{item:vertexcondition}.
  \end{enumerate}
\end{proposition}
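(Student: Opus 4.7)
The plan is to translate between an invariant ample divisor $D$ and a divisorial polytope $\Psi_D$ via a Legendre-type transform applied slice-by-slice, and then verify that ampleness corresponds precisely to the two extra conditions imposed on $\Psi$.

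First I would associate to an invariant Cartier divisor $D = \sum_\rho a_\rho H_\rho + \sum_{P,v} b_{P,v} V_{P,v}$ a family of integral piecewise affine support functions $h_{D,P}\colon |\fan_P| \to \QQ$ whose regions of affine linearity are the maximal polyhedra of $\fan_P$, with $h_{D,P}(v) = -b_{P,v}/\mu(v)$ on each vertex $v \in \Vert_P$ and a common linear part $\lin(h_D)$ determined by $\lin(h_D)(\rho) = -a_\rho$ on $\rho \in \Hor$. Cartierness is exactly the existence of such globally defined $h_{D,P}$, where the character lattice $M$ enters via principal divisors as in Theorem~\ref{thm:divclass}. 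I would then define $\Psi_D$ as the fiberwise Legendre transform
\[
\Psi_{D,P}(u) := \min_{v \in \Vert_P}\bigl(\langle u, v\rangle - h_{D,P}(v)\bigr),
\]
restricted to the polytope $\Box_D := \{u \in M_\QQ \mid \langle u, \cdot\rangle \geq \lin(h_D) \text{ on } \tail\fan\}$. Concavity and piecewise affinity of $\Psi_{D,P}$ are immediate as a minimum of a finite family of affine functions, and integrality of its graph follows from integrality of $h_{D,P}$.

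Next I would verify the two listed conditions. For (i), involutivity of the Legendre transform in this setting gives that $\Psi_{D,P}^*$ recovers $h_{D,P}$, so its regions of linearity coincide with the maximal polyhedra of $\fan_P$. For (ii), a weight $u \in \Box_D$ realizes $\langle u, v\rangle = \min \langle\Box_D,v\rangle$ exactly when $u$ lies on a face where $\Psi$ vanishes on the ray $\QQ_{\geq 0}\cdot v$, and by Definition~\ref{def:f-divisor} this occurs iff $v \in \sdeg$. Strict positivity of $\deg \Psi$ on the interior of $\Box_D$ is where ampleness enters: the weight decomposition $H^0(X, \CO(nD))_u = H^0(\PP^1, \CO(\lfloor n \Psi_D(u) \rfloor))$ from \cite{tidiv} shows that ample divisors must produce nontrivial sections for every interior lattice $u$, forcing $\deg \Psi_D(u) > 0$.

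For the converse, given $\Psi \colon \Box \to \wdiv_\QQ \PP^1$ satisfying the two conditions, I would set $b_{P,v} := -\mu(v)\Psi_P^*(v)$ and $a_\rho := -\lin(\Psi^*)(\rho)$, check Cartierness of the resulting divisor $D_\Psi$ using condition~(i) to match $\Psi^*$ with the prescribed $\fan_P$, and deduce ampleness from the interior positivity of $\deg \Psi$. The main obstacle is this last step: upgrading the interior positivity of $\deg \Psi$ to strict positivity of $D_\Psi\cdot C$ on every invariant curve $C$. I would handle it via the intersection formulas for invariant curves on complexity-one $T$-varieties from \cite{tidiv}: each such $C$ corresponds to either a codimension-one cell of some $\fan_P$ or to a ray of $\tail\fan$, and $D_\Psi \cdot C$ is expressed through slopes and degree jumps of $\Psi$ across that cell. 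Strict concavity of $\Psi$ together with strict positivity of $\deg\Psi$ in the interior then yields $D_\Psi\cdot C > 0$ for every invariant $C$, whence ampleness follows by Nakai--Moishezon.
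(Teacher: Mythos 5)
A remark on provenance first: the paper does not prove Proposition~\ref{prop:divpolys} at all, it imports it from the cited sources, and your outline follows precisely the route of those sources (divisorial support functions on the slices, fiberwise Legendre duality, positivity on invariant curves). So the comparison is with that standard argument, and against it your write-up has a genuine gap in the forward direction for condition (i). You define $\Psi_{D,P}$ as a minimum over the vertex values of $h_{D,P}$ and then assert that ``involutivity of the Legendre transform'' makes the regions of affine linearity of $\Psi^*_P$ coincide with the maximal polyhedra of $\fan_P$. Involutivity only holds once $h_{D,P}$ is concave, and recovering the subdivision $\fan_P$ on the nose --- rather than a coarsening obtained by merging cells across a wall on which $h_{D,P}$ happens to stay affine --- requires strict concavity across every wall of every $\fan_P$. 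For a general invariant Cartier divisor neither holds; it is exactly ampleness, i.e.\ positivity of $D$ on the invariant curves sitting over the walls, that forces this strict concavity, and your argument never uses ampleness at this point (you invoke the curve criterion only in the converse direction). Without that step the map $D\mapsto\Psi_D$ is not seen to land in the set of polytopes satisfying (i), and injectivity of the correspondence also becomes unclear.

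Three further points need repair, though they are more easily fixed. The section formula only yields $\deg\Psi_D(u)\geq 0$ for interior $u$ (nonvanishing of $H^0(\PP^1,\CO(\Psi_D(u)))$ needs only degree $\geq 0$), not the strict positivity required in the definition of a divisorial polytope; one clean fix is to note that $\deg\Psi_D$ is concave and nonnegative, so vanishing at an interior point would force $\deg\Psi_D\equiv 0$ and hence $\vol\Psi_D=0$, contradicting $(D)^n>0$. Your treatment of (ii) is essentially a restatement of the claim: the ``iff'' has to come from analyzing $\deg\Psi_D$ on the face of $\Box_D$ whose normal cone is a maximal tail cone $\sigma$, using $\sum_P\Delta^\sigma_P=\sdeg\cap\sigma$ from Definition~\ref{def:f-divisor} when $\sigma\cap\sdeg\neq\emptyset$ and the positivity of $D$ on the horizontal curve attached to $\sigma$ when $\sigma\cap\sdeg=\emptyset$; as written, ``this occurs iff $v\in\sdeg$'' assumes what is to be shown. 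Finally, in the converse direction Nakai--Moishezon requires positivity on all subvarieties, not just curves; to conclude from invariant curves alone you need the equivariant refinement (Kleiman's criterion together with the fact that the Mori cone of a complete rational complexity-one $T$-variety is generated by invariant curves, or simply the ampleness criterion of the cited papers), and that input should be stated explicitly.
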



Moreover, by \cite[Corollary 3.19, Theorem 3.12]{tidiv} $\Psi$ as in Proposition~\ref{prop:divpolys} corresponds to an anti-canonical divisor on $X=X(\fan)$, if there exists an integral divisor $K_{\PP^1} = \sum a_P \cdot P$ of degree $-2$ on $\PP^1$, such that  
\begin{equation}
   \label{eq:fano}
   \Psi^*_P(v)=-a_P - 1 + \nicefrac{1}{\mu(v)},\quad \lin(\Psi^*)(v_\rho)=1
\end{equation}
for every vertex $v \in \fan_P$ and every ray $\rho$ of $(\tail \fan)^\times$. In particular, $X$ will be Fano. In this case we say that $\Psi$ is a divisorial polytope corresponding to the Fano T-variety $X$.

\begin{remark} 
\label{rem:lattice-distance}

By Proposition~\ref{prop:divpolys} (i) we have a one-to-one correspondence between facets of the graph of $\Psi_P$ and vertices of $\fan_P$. A facet of the graph of $\Psi_P$ projects to a region $U \subset \Box$ where $\Psi_P$ is affine linear, i.e. $\Psi_P|_U = \langle \cdot , v \rangle + c$ for some $v \in N_\RR$. The corresponding vertex of $\fan_P$ is exactly $v$ and we have $\Psi^*(v) = c$. Taking this into account we can reformulate condition (\ref{eq:fano}) as follows.
  \begin{enumerate}
  \item For every facet $F$ of the graph of $\Psi_P + a_P + 1$ there is a vertex $v$ such that $\langle\;\cdot\; , \mu(v)(v,1) \rangle \equiv 1$ on $F$. In particular, $F$ has lattice distance $1$ from the origin. 
Moreover, we have $\Psi_P \equiv -a_P$ if $\fan_P$ is trivial.
  \item the facets $F$ of $\Box$ with $\deg \Psi|_F \not \equiv 0$ have lattice distance $1$ from the origin.
  \end{enumerate}
\end{remark}

\begin{definition}
  The \emph{volume} of a divisorial polytope is defined, by
  \[\vol \Psi := \int_{\Box} \! \deg \Psi \; d \mu.\]
\end{definition}

Here, we are integrating with respect to the Euclidean measure $\mu$ induced by the inclusion $M \subset M_\RR$.

\begin{theorem}[{\cite[Proposition 3.31]{tidiv}}]
\label{sec:intersection-nr}
  If $D$ is ample on a T-variety $X=X(\fan)$ of dimension $n$, then its top self intersection number is given by
  \[(D)^{n} = n!\vol \Psi_D.\]
\end{theorem}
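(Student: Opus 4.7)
The plan is to derive the intersection number from the Hilbert polynomial of $kD$ for $k \gg 0$, and compare it with a direct asymptotic count of global sections via the decomposition into weight spaces given by~(\ref{eq:sections}). Since $D$ is ample, Serre vanishing gives $\chi(X, \CO(kD)) = h^0(X, \CO(kD))$ for $k \gg 0$, and asymptotic Riemann--Roch gives
\[
h^0(X, \CO(kD)) = \frac{(D)^n}{n!}\, k^n + O(k^{n-1}).
\]
Thus it suffices to establish the matching asymptotic $h^0(X, \CO(kD)) = k^n \vol \Psi_D + O(k^{n-1})$.

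First, I would apply formula~(\ref{eq:sections}) to $kD$. The divisorial polytope attached to $kD$ is the scaled function $u \mapsto k\,\Psi_D(u/k)$ on $k\Box_D$, so the weight decomposition yields
\[
h^0(X, \CO(kD)) = \sum_{u \in k\Box_D \cap M} h^0\bigl(\PP^1,\, \CO(k\Psi_D(u/k))\bigr).
\]
For points $u$ in the relative interior of $k\Box_D$ and $k \gg 0$, the divisor $k\Psi_D(u/k)$ has positive degree, so the corresponding summand equals $\lfloor k\deg\Psi_D(u/k)\rfloor + 1$; for $u$ on the boundary it is either $0$ or still bounded by $k\deg\Psi_D(u/k) + 1$, and the lattice-point count on $\partial(k\Box_D)$ is only $O(k^{n-2})$.

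Second, I would interpret the resulting finite sum as a Riemann sum for the continuous piecewise-affine function $\deg \Psi_D$ on the $(n-1)$-dimensional polytope $\Box_D$. The lattice points $u \in k\Box_D \cap M$ correspond, after rescaling by $1/k$, to the points of the refined lattice $\tfrac{1}{k}M$ lying in $\Box_D$, each representing a fundamental cell of Euclidean volume $k^{-(n-1)}$. Standard Riemann-sum convergence, together with the estimate $\lfloor x\rfloor = x + O(1)$ absorbed into the $O(k^{n-1})$ error, then gives
\[
h^0(X, \CO(kD)) = k^n \cdot k^{-(n-1)}\!\!\sum_{u \in k\Box_D \cap M} \deg\Psi_D(u/k) + O(k^{n-1}) = k^n \int_{\Box_D}\!\deg\Psi_D + O(k^{n-1}),
\]
and combining with asymptotic Riemann--Roch yields $(D)^n = n!\vol \Psi_D$.

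The main technical obstacle is the error control: one needs to show that replacing floor functions by their arguments, ignoring the additive $+1$ summands, and removing boundary lattice points all together contribute at most $O(k^{n-1})$. A convenient way to handle this rigorously is to subdivide $\Box_D$ according to the linearity regions of $\Psi_D$, so that on each piece $\deg\Psi_D$ becomes affine linear, and then invoke the classical Ehrhart expansion for the number of lattice points in dilates of a lattice polytope. The normalization condition~(\ref{item:vertexcondition}), which forces $\deg \Psi_D$ to vanish exactly on the facets of $\Box_D$ meeting $\sdeg$, guarantees that these boundary regions contribute no positive bulk to the integral, and thus the leading order of both asymptotics is captured by the interior alone.
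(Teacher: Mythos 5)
Your argument is correct, and it is worth noting that the paper itself gives no proof of this statement: it is quoted from \cite[Proposition 3.31]{tidiv}, so your write-up is a genuinely independent derivation rather than a reproduction. Your route (Serre vanishing plus asymptotic Riemann--Roch on one side, the weight decomposition (\ref{eq:sections}) for $kD$ on the other, with the lattice sum over $k\Box_D\cap M$ treated as a Riemann sum for $\deg\Psi_D$) is essentially the same asymptotic section-count machinery that the paper later sets up in the appendix for the Futaki character: Proposition~\ref{sec:prop-virtpol-basics} identifies $h^0(X,\CO(kD))$ with the lattice-point count $N(k*[\Psi_D])$ (using $[\Psi_{kD}]=k*[\Psi_D]$ from \cite[Proposition 3.1]{IS10}, which also justifies your scaling claim), and Proposition~\ref{sec:prop-asymptotic} gives the expansion $N(k*[\Psi])=\vol[\Psi]\,k^{n}+O(k^{n-1})$ with $\vol[\Psi]=\vol\Psi$; combining that with asymptotic Riemann--Roch is exactly your proof, so your approach buys a self-contained argument while the paper buys brevity by citation. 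Two small points to tighten: the summand in (\ref{eq:sections}) is $\deg\lfloor k\Psi_D(u/k)\rfloor+1$ (round down coefficientwise, then take the degree), not $\lfloor k\deg\Psi_D(u/k)\rfloor+1$, and it vanishes when that rounded degree is negative; but since the discrepancy at each lattice point is bounded by the (fixed, finite) number of nontrivial coefficients of $\Psi_D$, it is absorbed into your $O(k^{n-1})$ error exactly as you indicate. Likewise the boundary terms can be of size $O(k)$ each, but with only $O(k^{n-2})$ boundary lattice points this again contributes $O(k^{n-1})$, so the leading coefficients match and $(D)^n=n!\vol\Psi_D$ follows.
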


\begin{example}[{Fano threefold 3.10 (continued)}]
\label{exp:degree}
In Figure~\ref{fig:divpoly-3.10} the divisorial polytope $\Psi: \Box \rightarrow \wdiv_\RR \PP^1$ for the canonical divisor is sketched. More precisely we give the non-trivial concave functions $\Psi_P$ by drawing $\Box$ and the regions of affine linearity of $\Psi_P$ and giving the values of $\Psi$ at the vertices of these regions. 
 
 \begin{figure}[htbp]
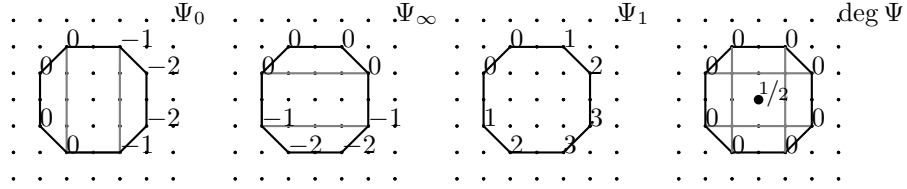

    \centering
    \dualBBUnshaded
    \vspace{-1.5em}
    \caption{A divisorial polytope for the threefold 3.10.}
    \label{fig:divpoly-3.10}
  \end{figure}

We have $\deg \Psi=0$ on the boundary of $\Box$ and $\sdeg$ intersects all tail cones. Hence, condition (ii) of Proposition~\ref{prop:divpolys} is fulfilled. We will now check that the $\Psi$ induces the subdivisions $\fan_P$ and fulfills (\ref{eq:fano}), for $K_{\PP^1}=-2\cdot[1]$. Calculating $\Psi^*$ gives
\begin{align*}
  \Psi^*_0 =  \min\big(\; &\textstyle\langle {2 \choose 1},\cdot \rangle + 2,\;  \langle {1 \choose 2},\cdot \rangle + 1, \; \langle {-1 \choose 2},\cdot \rangle + 0,\;  \langle {-2 \choose 1},\cdot \rangle + 0,\\
                   &\textstyle \langle {2 \choose -1},\cdot \rangle + 2,\;  \langle {1 \choose -2},\cdot \rangle + 1, \; \langle {-1 \choose -2},\cdot \rangle+0,\;  \langle {-2 \choose -1},\cdot \rangle + 0 \big).\\
\Psi^*_\infty =  \min\big(\; &\textstyle\langle {2 \choose 1},\cdot \rangle + 0,\;  \langle {1 \choose 2},\cdot \rangle + 0, \; \langle {-1 \choose 2},\cdot \rangle + 0,\;  \langle {-2 \choose 1},\cdot \rangle + 0,\\
                   &\textstyle \langle {2 \choose -1},\cdot \rangle + 1,\;  \langle {1 \choose -2},\cdot \rangle + 2, \; \langle {-1 \choose -2},\cdot \rangle+2,\;  \langle {-2 \choose -1},\cdot \rangle + 1 \big).\\
\Psi^*_1 =  \min\big(\; &\textstyle\langle {2 \choose 1},\cdot \rangle  -2,\;  \langle {1 \choose 2},\cdot \rangle - 1, \; \langle {-1 \choose 2},\cdot \rangle + 0,\;  \langle {-2 \choose 1},\cdot \rangle + 0,\\
                   &\textstyle \langle {2 \choose -1},\cdot \rangle -3,\;  \langle {1 \choose -2},\cdot \rangle -3, \; \langle {-1 \choose -2},\cdot \rangle-2,\;  \langle {-2 \choose -1},\cdot \rangle - 1 \big).
\end{align*}
It's not hard to check, that $\Psi^*_P$ is affine linear exactly on the maximal polyhedra of $\fan_P$. Hence, by Proposition~\ref{prop:divpolys} it corresponds to an ample divisor $D$ on $X(\fan)$. Moreover, we obtain 
\[
\begin{array}{rrr}
  \Psi^*_0(0,0)=0&h_0(-\nicefrac{1}{2},0)=\nicefrac{1}{2}&h_0(-1,0)=0\\
\Psi^*_\infty(0,0)=0&h_\infty(0,-\nicefrac{1}{2})=\nicefrac{1}{2}&h_\infty(0,-1)=0\\
              \Psi^*_1(\nicefrac{1}{2},\nicefrac{1}{2})=1& &
\end{array}
\]
Hence,  (\ref{eq:fano}) is fulfilled for our choice $K_{\PP^1}=-2\cdot[1]$ and $D$ is an anti-canonical divisor. 

By elementary calculations for $\vol \Psi$ we get
\[\vol \Psi=\int_{\Box} \! \deg \Psi  = \frac{13}{3}.\]

Putting together the observations of Example~\ref{ex:threefold3.10-1}, \ref{exp:canonical} and \ref{exp:degree} we conclude that $X$ is a smooth Fano threefold of Picard rank $3$ and Fano degree $26$. By \cite{mm86} this is a blow up of the quadric threefold in two disjoint conics.
\end{example}

\subsection{Moment map and Duistermaat-Heckman measure}
\label{sec:moment-map}
In this section we give an interpretation of divisorial polytopes in terms of moment maps. 
  
The polarisation of a T-variety $X$ by $L=\CO(D)$, for an ample invariant divisor $D$, induces a symplectic structure on $X$ by pulling back the Fubini-Study form of the corresponding torus equivariant embedding into projective space (at least for a suitable multiple of $D$). The action of the algebraic torus action on $X$ induces a Hamiltonian action of the contained (maximal) compact torus $T_c$ on the corresponding symplectic manifold. This action comes with a moment polytope $P$ and a moment map $\phi:X \rightarrow P$. Now, the push forward of the canonical symplectic measure on $X$ via the moment map  gives a measure on $P$. This measure is called the Duistermaat-Heckman measure and the corresponding continuous density function $f$ is called the Duistermaat-Heckman function. It is known to be piecewise polynomial and the degree corresponds to the complexity of the torus action, cf. \cite{dh-measure}.  

The global sections $L$ decompose in homogeneous components of weight $u\in M$. 
By \cite[Proposition~3.23]{tidiv} we may express such a homogeneous component in terms of the corresponding divisorial polytope $\Psi_D$:
  \begin{equation}
    \label{eq:sections}
    H^0\big(X, L\big)_u \cong
    \begin{cases}
      H^0 \big(\PP^1,\, \CO(\Psi_D(u))\big)&, u\in \Box_D\\
      0&, \text{else}
    \end{cases}
  \end{equation}
  From this one can deduce that $\Box=\Box_D$ equals the moment polytope $P$, see \cite[Section~14.7]{tvars}. More generally for powers if $L$ we get by \cite[Prop. 3.1]{IS10}
  \begin{equation}
    \label{eq:sections-rational}
H^0\big(X, L^{\otimes k} \big)_{k \cdot u} = H^0 \big(\PP^1,\, \CO(k\cdot \Psi_D(u))\big)
\end{equation}
for $u \in \Box \cap \frac{1}{k}\cdot M$.

 One obtains the value $f(0)$ as the volume of the symplectic reduction of the moment fiber $\phi^{-1}(0)$. By the Kempf-Ness Theorem the symplectic reduction  of $\phi^{-1}(0)$ by $T_c$ is the same as the GIT quotient $Y=X/\!\!/^{L} T = \proj \bigoplus_k  H^0(X, L^{\otimes k})^T$. The GIT quotient comes together with a polarization by a $\QQ$-line bundle $\CO_Y(1)$, given by the invariant sections of $L$. This polarization is compatible with the symplectic structure on $\phi^{-1}(0)/T_c$.  Hence, we have $\vol \phi^{-1}(0)/T_c = (\CO_Y(1))^{\dim Y}$.

If  $0$ is in the interior of $\Box$ we get $Y=X/\!\!/^{L} T = \PP^1$. Moreover, by (\ref{eq:sections}) we have $\CO_Y(1) = \CO(\Psi(0))$ and we get $f(0) = \deg(\Psi(0))$. By shifting the linearization of $L$ (and therefore the moment map) by $u \in \Box \cap M$ we get $f(u) = \deg(\Psi(u))$ for all lattice points in $\Box$. By (\ref{eq:sections-rational}) we get the same for rational points in $\Box$ and eventually by continuity for all points.

Altogether we obtain
\begin{proposition}
\label{prop:dh-measure}
  The Duistermaat-Heckman function corresponding to the polarization given by an invariant ample divisor $D$ with divisorial polytope $\Psi_D$ is given by $\deg \Psi:\Box_D \rightarrow \RR$.
\end{proposition}

The other way around, we may interpret the divisorial polytope  $\Psi_D$ as a decomposition of the Duistermaat-Heckman function corresponding to the polarization of $X$ by $L$. This shows the relation between our divisorial polytopes and the description of Hammiltonian torus actions given by Karshon and Tolman in \cite{kt03}.

\subsection*{The Cox ring}
\label{sec:cox-ring}
For a normal variety with a class group isomorphic to $\ZZ^r$ the Cox ring is defined as
\[\Cox(X)=\bigoplus_{\alpha \in \ZZ^r} H^0\left(X,\CO(\textstyle\sum_{i} \alpha_i D_i)\right),\]
with $D_1, \ldots, D_r$ being a basis of $\cl(X)$. For class groups which are not free the definition is little bit more involved. Since smooth Fano varieties always have free class group, we don't have to discuss this here.



In \cite[Theorem~1.2]{cox} the Cox ring of a T-variety  was calculated in terms of the combinatorial data. Due to \cite{tvars} we have the following reformulation of this theorem in our language.
\begin{theorem}[{\cite[Theorem~40]{tvars}}]
\label{thm:cox-ring}
  \[\Cox(X)=\frac{\CC\left[T_{v},S_\rho \mid v \in \Vert, \rho \in \Hor\right]}{\langle T^{\mu(0)} + cT^{\mu(\infty)} + T^{\mu(c)} \mid c \in \CC^* \rangle},\]
where \(T^{\mu(P)}=\prod_{v \in \Vert_P} T_{v}^{\mu(v)}.\)

The $\cl(X)$-grading is given by 
\(\deg(T_{v})= [V_{v}]$,  $\deg(S_\rho)=[H_\rho].\)
\end{theorem}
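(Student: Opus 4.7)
This theorem is essentially \cite[Theorem~40]{tvars} rephrased in the f-divisor language of the present paper, so the plan is to translate between the two descriptions term by term. There are three pieces to match up: the generators, the grading, and the trinomial relations. The first two are essentially bookkeeping, and all the content sits in the third.

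For the generators and grading I would invoke Theorem~\ref{thm:divclass}, which identifies the $T$-invariant prime divisors of $X(\fan)$ as the vertical $V_v$ (one per $v \in \Vert$) and the horizontal $H_\rho$ (one per $\rho \in \Hor$), and whose relations exhibit the corresponding classes as generating $\cl(X)$. The canonical global section of $\CO(V_v)$ (respectively $\CO(H_\rho)$), viewed in homogeneous degree $[V_v]$ (respectively $[H_\rho]$), becomes the generator $T_v$ (respectively $S_\rho$), reproducing the polynomial ring on the numerator of the stated presentation with the asserted grading. To produce the relations I would fix an affine coordinate $z$ on $\PP^1$ with $z(0)=0$ and $z(\infty)=\infty$. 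For each $c\in\CC^*$ the rational function $z-c$ has divisor $[c]-[\infty]$ on $\PP^1$, so its pullback under the rational quotient $\pi\colon X(\fan)\dashrightarrow\PP^1$ is a $T$-invariant rational function on $X(\fan)$ with divisor
\[
\sum_{v\in\Vert_c}\mu(v)\,V_v\;-\;\sum_{v\in\Vert_\infty}\mu(v)\,V_v.
\]
Under the identification of canonical sections with monomials this reads $T^{\mu(c)}/T^{\mu(\infty)} = T^{\mu(0)}/T^{\mu(\infty)} - c$, which after clearing denominators and absorbing a sign into the normalization of the $T_v$ is exactly the trinomial $T^{\mu(0)} + c\,T^{\mu(\infty)} + T^{\mu(c)}$ of the theorem. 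Homogeneity is automatic from the first batch of relations in Theorem~\ref{thm:divclass}: all three monomials $T^{\mu(P)}$ share the common class $[\pi^*[\pt]]$.

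The main obstacle, which I would import directly from \cite[Thm.~1.2]{cox}, is the converse: showing that these trinomials cut out the \emph{entire} ideal of relations among the $T_v$ and $S_\rho$, rather than merely a subideal. The argument in \cite{cox} proceeds by covering $X(\fan)$ with affine $T$-invariant charts whose coordinate rings are explicit localizations of the candidate Cox ring, so that no further relations can be forced. In our language the point is that inside the common homogeneous component of degree $[\pi^*[\pt]]$ the monomials $T^{\mu(P)}$ realize the pullback of the two-dimensional space $H^0(\PP^1,\CO(1))$; any three such sections satisfy exactly the linear dependence displayed above, and a standard syzygy computation rules out any higher relations. A final cosmetic observation is that the infinite family indexed by $c\in\CC^*$ collapses to a finite one in practice: only finitely many slices $\fan_P$ are non-trivial, and for a trivial slice the associated trinomial is either vacuous or merely eliminates a redundant generator, so the presentation one records in the examples of Section~\ref{sec:pictures} is genuinely finite.
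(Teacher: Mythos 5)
The paper offers no proof of this statement at all: it is quoted directly from \cite[Theorem~40]{tvars}, itself a reformulation of \cite[Theorem~1.2]{cox}, so the only ``argument'' in the paper is that citation. Your sketch --- generators and grading read off from Theorem~\ref{thm:divclass}, the trinomials obtained by pulling back the functions $z-c$ along $\pi$ with homogeneity coming from the first family of relations in Theorem~\ref{thm:divclass}, and the essential point that these trinomials generate the whole ideal imported from \cite[Theorem~1.2]{cox} --- is a correct unpacking of exactly that citation, hence essentially the same approach as the paper.
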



\begin{example}[{Fano threefold 3.10 (continued)}]
  \label{exp:cox}
For our f-divisor from Example~\ref{ex:threefold3.10-1} we get the Cox ring
\[\CC[T_1,\ldots,T_7]/\langle T_1T_2^2T_3+T_4T_5^2T_6+T_7\rangle.\] 
The grading is given by $\deg(T_i) = [D_i]$, with the notation of Example~\ref{exp:canonical}. Using the identification $\cl X  \cong \ZZ^3$ from Example~\ref{exp:canonical} we obtain the weight matrix

\[\begin{array}{rrrrrrrrl}
  &T_1&T_2&T_3&T_4&T_5&T_6&T_7
\vspace{2mm}\\
 \ldelim({3}{0.5ex}&-1&1&-1&0&0&0&0\rdelim){3}{0ex}\\
  &1& 0&1&0&1&0&1\\
  &1& 0&1&1&0&1&1\\
\end{array}\]
\end{example}

\bigskip

\subsection*{The Futaki character}
\label{sec:futaki-character}
In \cite{0506.53030} Futaki introduced the invariant as an obstruction for the existence of K\"ahler-Einstein metrics on Fano manifolds. Hence, for a K\"ahler-Einstein Fano manifold the Futaki character has to vanish. 

Donaldson gave a an algebraic redefinition of this invariant in \cite{1074.53059}, which we are using in the following. Consider a $\CC^*$-action $\lambda$ on a normal variety $X$ of dimension $n$ and an invariant Cartier divisor $D$. Now, let $l_k=\dim H^0(X,\CO(kD))$ and $w_k$ the total weight of the $\CC^*$-action on $H^0(X,\CO(kD))$, i.e. $w_k= \sum_i i\cdot \dim H^0(X,\CO(kD))_i$. Then $l_k$ and $w_k$ obtain expansions
\begin{align*}
  l_k &= a_0 k^n + a_1 k^{n-1} + O(k^{n-2}),\\
  w_k &= b_0 k^{n+1} + a_1 k^{n} + O(k^{n-1}).
\end{align*}
\begin{definition}
   \label{sec:def-futaki}
   The Futaki invariant of this $\CC^*$-action is defined as 
   \[F_D(\lambda)=2 \cdot \frac{a_1b_0-a_0b_1}{a_0}.\]
\end{definition}
\begin{remark}
  Note, that we choose a different scaling and sign as Donaldson for the invariant. More precisely his invariant equals $-\frac{1}{2a_0}F_D$ in our notation.
\end{remark}

 By plugging in one-parameter subgroups of an acting torus we obtain a linear map $F_D:N \rightarrow \RR$, hence an element of $M_\RR$. If $X$ is Fano we set $F(X):=F_{(-K_X)}$.

 For a toric variety Donaldson in \cite{1074.53059} gives an easy formula to actually calculate the Futaki character. Let $\Delta$ be the polytope corresponding to an ample divisor $D$ on a toric variety. Then the Futaki character is given by
 \begin{equation}
   \label{eq:futaki}
   F_D(v) \;=\; \int_{\partial \Delta} v \;d\mu_1 \; - \; \frac{\rvol \partial\Delta}{\vol \Delta}\cdot\int_\Delta v \; d\mu.
 \end{equation}
   Here, we are integrating over $\Delta$ with the Euclidean measure $\mu$ induced by the lattice $M \subset M_\RR$. For integrating over the boundary we are using the \emph{facet measure} $\mu_1$, which is induced by $(\RR \cdot \delta) \cap M$ for every facet $\delta \prec \Delta$. The corresponding \emph{facet volume} is denoted by $\rvol$ and the \emph{facet barycenter} by $\rbc$. Note, that up to scaling by $(\rvol \partial\Delta)$, for the Futaki character we obtain the difference of the barycenters of $\Delta$ and its boundary:
 \begin{equation}
   \label{eq:futaki-barycenter}
   F_D =(\rvol \partial\Delta)\cdot(\rbc(\partial\Delta) - \bc(\Delta)).
 \end{equation}

It's easy to see that for the standard polytope of the canonical divisor $\Delta=\{u \mid \forall_{\rho \in \Sigma^{(1)}}: \langle u, n_{\rho} \rangle \geq -1\}$ the formula simplifies to $F(X) = \vol \Delta \cdot \bc \Delta$, i.e. the Futaki character is up to positive scaling  given by the barycenter of the polytope. This was already observed by Mabuchi \cite{0661.53032}. For generalizing the last result to our case we define the \emph{barycenter} $\overline{\bc}(\Psi)$ of a divisorial polytope $\Psi:\Box \to \wdiv_\RR \PP^1$ by
\[\langle \overline{\bc}(\Psi) , v \rangle =  \int_{\Box_h} \!v \cdot \deg \Psi \; d\mu.\]
This gives the same as the projection to $M_\RR$ of the barycenter of
\[\Delta:=\Delta(\deg \Psi):=\{(u,x) \mid u \in \Box,\;0 \leq x \leq \deg(u) \}.\]

\begin{theorem}
\label{thm:futaki-simplified}
  For a Fano T-variety $X$ and a corresponding divisorial polytope $\Psi$ we have \[F(X) = \vol\Psi\cdot \overline{\bc}\Psi.\]
\end{theorem}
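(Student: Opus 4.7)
The plan is to compute the asymptotic coefficients $a_0, a_1, b_0, b_1$ of Definition~\ref{sec:def-futaki} for $D=-K_X$ directly from the divisorial polytope $\Psi$. Using the character decomposition~(\ref{eq:sections}) and Riemann-Roch on $\PP^1$, for $u \in k\Box\cap M$ we have
\[
\dim H^0(X, -kK_X)_u \;=\; k\deg\Psi(u/k) + 1 - \sum_{P}\{k\Psi_P(u/k)\},
\]
where $\{\cdot\}$ denotes the fractional part. Summing yields lattice-sum expressions for $l_k$ and $w_k(v)$, and an Euler-Maclaurin expansion of these sums immediately gives $a_0 = \vol\Psi$ and an expression for $b_0(v)$ equal to $\vol\Psi \cdot \langle\overline{\bc}\Psi, v\rangle$, essentially by the definition of $\overline{\bc}\Psi$ as the projected barycenter of $\Delta(\deg\Psi)$.

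The subleading coefficients $a_1$ and $b_1(v)$ receive three types of contributions: (i) the usual boundary-surface correction along $\partial\Box$ from Euler-Maclaurin; (ii) the constant $+1$ in the Riemann-Roch formula, which gives an Ehrhart-style correction on $\Box$ itself; and (iii) the fractional-part terms $\{k\Psi_P(u/k)\}$ on each region of affine linearity of $\Psi_P$, which by equidistribution along arithmetic progressions produce boundary integrals weighted by factors of the form $(\mu(v)-1)/(2\mu(v))$ at each vertex $v \in \fan_P$. After packaging these contributions, the Fano conditions of Remark~\ref{rem:lattice-distance}---facets of the graph of $\mu(v)(\Psi_P + a_P + 1)$ at lattice distance $1$ in $M \oplus \ZZ$, and facets of $\Box$ with $\deg\Psi \not\equiv 0$ at lattice distance $1$ in $M$---enable a piecewise divergence-theorem argument on $\partial\Delta(\deg\Psi)$ that collapses the subleading terms into the identity needed to recast $F(v) = 2(a_1 b_0 - a_0 b_1)/a_0$ as $\vol\Psi \cdot \langle\overline{\bc}\Psi, v\rangle$. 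This establishes the claim $F(X) = \vol\Psi \cdot \overline{\bc}\Psi$.

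The main obstacle is the bookkeeping in step~(iii) together with the Fano identity in the last step: each maximal affine region of each slice $\fan_P$ contributes its own fractional-part correction, weighted by the vertex multiplicities, and these corrections must conspire precisely with the horizontal boundary integrals over $\partial\Box$ in order to produce the final divergence identity. This balancing is forced by condition~(\ref{eq:fano}), which is exactly the complexity-one analog of the lattice-distance-$1$ property used in Mabuchi's calculation of the Futaki character for toric Fano varieties; its detailed verification is the substantive content of the appendix.
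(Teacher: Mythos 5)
Your setup is sound, and in fact your expansion coefficients agree with the paper's: the weights $(\mu(v)-1)/(2\mu(v))\cdot\vol(U_v)$ coming from equidistribution of the fractional parts on each region of affine linearity are exactly what one gets from the paper's $\tfrac12\rvol\partial[\Psi]$, since the facet volume of the graph piece with slope $v$ is $\vol(U_v)/\mu(v)$; so steps (i)--(iii) correctly reproduce $a_0=\vol\Psi$, $b_0=\vol\Psi\cdot\langle\overline{\bc}\Psi,\cdot\rangle$ and the subleading terms (this is the hands-on version of Proposition~\ref{sec:prop-asymptotic} and Theorem~\ref{thm:futaki-character}, bypassing the formalism of $\widetilde\Pi$). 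The problem is that the theorem itself lives entirely in the step you do not carry out: the identity that converts $2(a_1b_0-a_0b_1)/a_0$ into $\vol\Psi\cdot\overline{\bc}\Psi$ using the Fano condition. You assert that the subleading terms ``collapse'' by a piecewise divergence-theorem argument and explicitly defer its verification, so the proof is incomplete precisely where the hypothesis \eqref{eq:fano} has to be used.

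Moreover, the shape you propose for that step is aimed at the wrong object. The lattice-distance-$1$ data of Remark~\ref{rem:lattice-distance} lives on the graphs of the individual $\Psi_P$ inside $M\times\ZZ$ (and on the facets of $\Box$), not on $\partial\Delta(\deg\Psi)$: the facets of the graph of $\deg\Psi$ are sums of affine pieces of different slices and carry no distance-$1$ property, so a divergence argument there has nothing to latch onto. What is actually needed (and is the content of the paper's appendix) is: (a) a normalization of the representative $K_{\PP^1}=\sum_{j=1}^{r-2}P_{r+j}-\sum_{i=1}^{r}P_i$ so that $\Psi_P\equiv 0$ or $\equiv-1$ away from the nontrivial slices -- without this the constants $a_P$ in \eqref{eq:fano} spoil the bookkeeping of the Ehrhart-type $+1$ term against the fractional-part terms; (b) assembling the subleading data into the class $C=[\Delta^\partial]+(2-r)[\Box\times\{-1\}]+\sum_i[\widehat{\Psi}_{P_i}]$ and matching it with the leading data of $\pyr(C)$ (Lemmas~\ref{lem:boundary} and~\ref{lem:full-pyramid}); and (c) the cone identities $\rvol\Delta_i=n\vol\pyr(\Delta_i)$ and $\overline{\rbc}(\Delta_i)=\tfrac{n}{n-1}\overline{\bc}(\pyr(\Delta_i))$ for codimension-one pieces at lattice distance $1$ (Lemma~\ref{lem:distance-1}), applied slice by slice with the $1/\mu(v)$-weighted facet volumes. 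This per-slice pyramid argument is the complexity-one analogue of Mabuchi's toric computation you invoke; until it (or an equivalent cancellation) is actually carried out, the claimed equality $F(X)=\vol\Psi\cdot\overline{\bc}\Psi$ has not been proved.
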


We postpone the proof to the appendix.

\begin{remark}
  By Proposition~\ref{prop:dh-measure} we may interpret the Futaki character as barycenter of the moment polytope with respect to the Duistermaat-Heckman measure $(\deg \Psi) \cdot \mu$, see also \cite[Theorem 9.2.3]{0661.53032} for closely related results.   
\end{remark}

\begin{example}
\label{exp:futaki}
We would like to compute the Futaki invariant of the Fano threefold 3.23. Its divisorial polytope $\Psi$ is given by Figure~\ref{fig:futaki}.  We can read off $\Delta(\deg \Psi)$ directly from the rightmost picture. 
\begin{figure}[htpb]
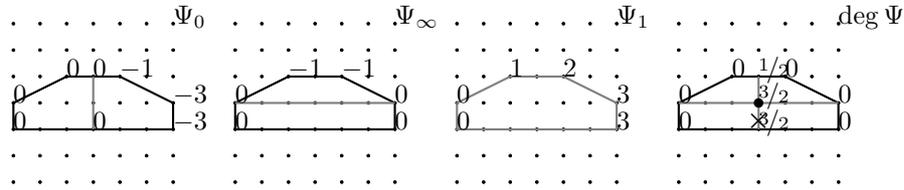

  \centering
   \dualBFlabeled
   \vspace{-1em}
  \caption{Divisorial polytope of 3.23}
  \label{fig:futaki}
\end{figure}
The barycenter of this polytope is $(0, -\nicefrac{9}{40}, \nicefrac{37}{80})^t$ and the volume $\nicefrac{20}{3}$. Note, that we have $\vol \Psi= \vol \Delta(\deg \Psi)$. Hence, by applying Theorem~\ref{thm:futaki-simplified} we get $F(X) = {0 \choose -\nicefrac{3}{2}}\neq 0$. In particular, there does not exists a K\"ahler-Einstein metric on $X$.
\end{example}

\section{Toric degenerations}
\label{sec:degenerations}
We are using the notation and results of \cite{1244.14044} to describe degenerations to toric varieties via f-divisors. 

First note, that the torus action of a T-variety $X(\fan)$ is not necessarily maximal. Consider an f-divisor $\fan$ with two non-trivial slices $\fan_0$ and $\fan_\infty$. Remark 1.8 in \cite{1244.14044} shows that the $T$-action on $X(\fan)$ extends to the action of a torus $T'$ of full dimension. Hence, $X$ is essentially toric with a fan $\Sigma$ as follows:  We embed the polyhedra of $\fan_0$ in height $1$ of $N_\QQ \times \QQ$ and consider the fan $\Sigma_0$ consisting of the cones over these polyhedra. We are doing the same with $\fan_\infty$ embedded in height $-1$ and obtain $\Sigma_\infty$. Now, we obtain $\Sigma$ from $\Sigma_0 \cup \Sigma_\infty$ by joining cones which intersect in a common facet of the form $\sigma \times \{0\}$, such that $\sigma \cap \sdeg \neq \emptyset$. 
\begin{example}
  \label{exp:downgrade-global}
Consider the f-divisor pictured in Figure~\ref{fig:downgrade-global} having two non-trivial slice $\fan_0$ and $\fan_\infty$.
\begin{figure}[htbp]
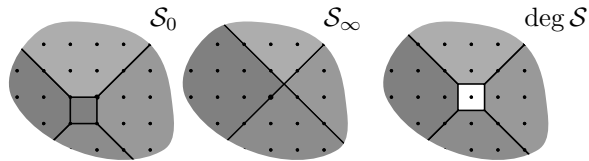

  \centering
  \threefoldQDeg
  \caption{F-divisor of a downgraded toric variety.}
  \label{fig:downgrade-global}
\end{figure}
The fan $\Sigma_0$ from above consists of five maximal cones: 
\begin{align*}
  \sigma_0&=\pos\big((0,0,1),(-1,0,1),(-1,-1,1),(0,-1,1)\big)\\
  \sigma_1&=\pos\big((0,0,1),(-1,0,1),(1,1,0),(-1,1,0)\big)\\
  \sigma_2&=\pos\big((0,0,1),(0,-1,1),(1,1,0),(1,-1,0)\big)\\
  \sigma_3&=\pos\big((-1,0,1),(-1,-1,1),(-1,1,0),(-1,-1,0)\big)\\
  \sigma_4&=\pos\big((-1,-1,1),(0,-1,1),(1,-1,0),(-1,-1,0)\big).
\end{align*}
The fan $\Sigma_\infty$ consists of the four cones 
\begin{align*}
  \tau_1&=\pos\big((1,1,-2),(1,1,0),(-1,1,0)\big)\\
  \tau_2&=\pos\big((1,1,-2),(1,1,0),(1,-1,0)\big)\\
  \tau_3&=\pos\big((1,1,-2),(-1,1,0),(-1,-1,0)\big)\\
  \tau_4&=\pos\big((1,1,-2),(1,-1,0),(-1,-1,0)\big).
\end{align*}
We have to join four pairs  of cones $(\sigma_i,\tau_i)$, $i=1,\ldots,4$ and obtain the fan $\Sigma$ consisting of the five maximal cones
\begin{align*}
  \delta_0&=\sigma_0\\
  \delta_1&=\pos\big((0,0,1),(-1,0,1),(1,1,-2)\big)\\
  \delta_2&=\pos\big((0,0,1),(0,-1,1),(1,1,-2)\big)\\
  \delta_3&=\pos\big((-1,0,1),(-1,-1,1),(1,1,-2)\big)\\
  \delta_4&=\pos\big((-1,-1,1),(0,-1,1),(1,1,-2)\big).
\end{align*}
This is the fan of the projective cone over the quadric surface. Moreover, it is the face fan of the (reflexive) polytope 
\[\conv\big((0,0,1),(-1,0,1),(-1,-1,1),(0,-1,1),(1,1,-2)\big)\]
which has ID 544395 in the classification of canonical toric Fano threefolds from \cite{Kas08}\footnote{see \url{http://grdb.lboro.ac.uk/search/toricf3c?ID=544395}}. 
\end{example}

Now, we consider a polyhedral subdivision $\Xi$. A Minkowski-decomposition of $\Xi$ consist of polyhedral subdivisions $\Delta=\Delta^1 + \ldots + \Delta^r$ for every $\Delta \in \Xi$ such that 
\begin{enumerate}
\item $(\Delta \cap \nabla)^i = \Delta^i \cap \nabla^i$, for $\Delta,\nabla \in \Xi$ and $\Delta \cap \nabla \neq \emptyset$,
\item For $I \subset \{1, \ldots, r\}$, $\cJ \subset \cI \subset \Xi$ we have
\[\sum_{i \in I} \bigcap_{\Delta \in \cI}\Delta^i \prec \sum_{i \in I} \bigcap_{\Delta \in \cJ}\Delta^i\]
\end{enumerate}

We obtain polyhedral subdivisions $\Xi^i := \{\Delta^i \mid \Delta \in \Xi\}$. By abuse of notation we will also write $\Xi=\Xi^1 + \ldots + \Xi^r$ for this situation. Such a decomposition is called \emph{admissible} if for every vertex $v$ of $\Xi$ there is at most one of the corresponding vertices $v_i \in \Xi_i$ with $v=\sum_i v_i$ which is not a lattice point.

 We may start with an f-divisor $\fan$ with non-trivial slices $\Xi_0=\fan_{P_0}, \ldots ,\Xi_\ell = \fan_{P_\ell}$ and an admissible Minkowski decomposition $\Xi_0=\Xi_{\ell +1} + \ldots + \Xi_{\ell +r}$. As described in \cite[Sections 2 and 4]{1244.14044} this data gives rise to a deformation of $X(\fan)$. A general fiber of this deformation corresponds to an f-divisor $\fan'$ with non-trivial slices being exactly 
\[\fan'_{P_1'}=\Xi_1,\; \ldots\;,\; \fan'_{P_{\ell+r}'} = \Xi_{\ell +r}.\]
For some distinct points $P_1', \ldots P_{\ell+r}' \in \PP^1$.

 We now reverse the above procedure in order to obtain toric degenerations. Let's start with an f-divisor and assume that all non-trivial slices are contained in 
$\{\fan_{P_1}, \ldots \fan_{P_{r}}, \fan_\infty\}$ and the first $r$ of them form an admissible Minkowski decomposition of some subdivision $\Xi$
\[\Xi = \fan_{P_1} + \ldots +\fan_{P_r}.\]
Then $X(\fan)$ is a deformation of the T-variety corresponding to the f-divisor
\[(\Xi \otimes [0] + \fan_\infty \otimes [\infty], \sdeg)\]
which describes a subtorus action on a toric variety as we have seen above.

\begin{example}
  \label{exp:degeneration}
  We consider the f-divisor with slices $\fan_0$, $\fan_\infty$, $\fan_1$ and degree sketched in Figure~\ref{fig:q}. The corresponding variety turns out to be the quadric threefold.
  \begin{figure}[htbp]
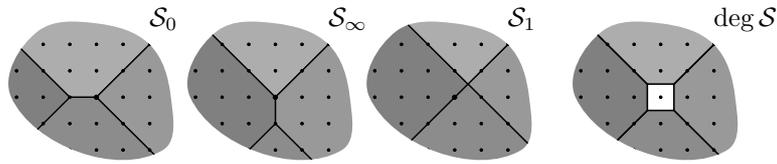

    \centering
    \threefoldQLabeled
    \caption{F-divisor of the quadric threefold}
    \label{fig:q}
  \end{figure}

  Note, that $\fan_0$ and $\fan_\infty$ form an admissible Minkowski decomposition, which is sketched in Figure~\ref{fig:decomposition}.
  \begin{figure}[htbp]
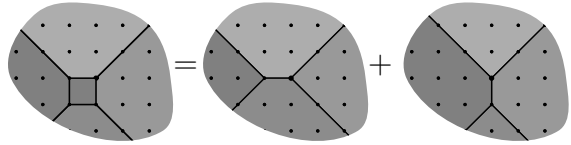

    \centering
    \threefoldQDegSum
    \caption{An admissible Minkowski decomposition}
    \label{fig:decomposition}
  \end{figure}
\end{example}

Hence, $X$ degenerates to the toric variety from Example~\ref{exp:downgrade-global}. We find another toric degeneration to  the canonical toric Fano variety with ID 547378 in \cite{Kas08} by adding up $\Xi=\fan_0+\fan_1$. Adding up $\Xi=\fan_0+\fan_\infty + \fan_1$ will give a toric degeneration to a non-canonical toric Fano variety.
\begin{remark}
  Note, that we do not get all toric degeneration with this method, but only those which are equivariant with respect to the $T$-action on $X$.
\end{remark}


\section{Proof by pictures}
\label{sec:pictures}
Aim of this section is to proof Theorem~\ref{thm:main}. For every variety from the list in Theorem~\ref{thm:main} we will state the f-divisor $\fan$ and a divisorial polytope $\Psi$, which corresponds to a canonical divisor. To verify the proof for a particular threefold one has to check first, that the f-divisor and divisorial polytope actually correspond to the given threefold. This is done by carrying out the following procedure.
\begin{enumerate}
\item[A.1] Applying Theorem~\ref{thm:smoothness}: checking the smoothness of $X(\fan)$ by checking the regularity of certain cones (Example~\ref{ex:threefold3.10-1}).
\item[A.2] Calculating the Picard rank of $X(\fan)$ by Corollary~\ref{sec:cor-picard-rank}.
\item[A.3] Checking that $\Psi$ corresponds to a canonical divisor (Example~\ref{exp:degree}).
\item[A.4] Calculating the Fano degree of $X(\fan)$ by Theorem~\ref{sec:intersection-nr} (Example~\ref{exp:degree}).
\end{enumerate}
Now, by the classification in \cite{mm86} we know that the given f-divisor describes a Fano threefold in the stated family. Hence, we constructed a Fano threefold with 2-torus action within this family. To check the other statement of Theorem~\ref{thm:main} we have to apply additional steps
\begin{enumerate}
\item[B.1] Applying Theorem~\ref{thm:cox-ring} to calculate the Cox ring (Example~\ref{exp:cox}).
\item[B.2] Applying Theorem~\ref{thm:futaki-simplified} to calculate the Futaki invariant (Example~\ref{exp:futaki}).
\item[B.3] Checking symmetry as in Example~\ref{sec:ex-symmetry}.
\item[B.4] Checking if $F(X)=0$ and if possible apply Theorem~\ref{thm:kesym} to check for the K\"ahler-Einstein property (Examples~\ref{sec:ex-symmetry}, \ref{exp:futaki}).
\item[B.5] ``Adding up slices'' to obtain toric degenerations (Example~\ref{exp:degeneration}).
\end{enumerate}
Beside the given examples we leave it to the reader to carry out the verification procedure for the given f-divisors. 

We give some hints how to interpret the pictures and information given below. As in the examples we first state  the non-trivial slice $\fan_0$, $\fan_\infty$, $\fan_1$ and $\sdeg$ of an f-divisor $\fan$.  Next, we state the divisorial polytope of the canonical divisor by giving $\Psi_0$, $\Psi_\infty$ and $\Psi_1$ (all other $\Psi_P$ are assumed to be trivial). For convenience of the reader we state $\deg \Psi$, as well. In the pictures $\bullet$ marks the origin and $\times$ the Futaki character (if it differs from the origin).  If there exists equivariant degenerations to canonical toric varieties then we state the corresponding IDs for the Fano polytopes in the \emph{Graded Ring Database} \cite{grdb}.

\begin{center}
\begin{tabular}{lrclrclr}
\toprule
 \bf Name :& Q &\multicolumn{6}{l}{{\bf Description:}  Quadric threefold}\\
\midrule
 \bf Picard rank: & 1 & & \bf Fano Degree: & 54 & &  \bf symmetric: & yes\\
 \bf Futaki-char.: & 0 & &\bf Degenerations: & \multicolumn{4}{l}{544395, 547378}\\
\multicolumn{4}{c}{\bf Cox ring:  $\CC[T_1,\ldots,T_5]/\langle T_1T_2+T_3T_4+T_5^2\rangle$} & \multicolumn{4}{c}{$\deg(\underline{T})=(1,1,1,1,1)$}\\
\end{tabular}
 \threefoldQ\\ 
\dualQ\\

\enlargethispage{1cm}
\vspace{0.3em}
\begin{tabular}{lrclrclr}
\toprule
\bf Name :& 2.24 &\multicolumn{6}{l}{{\bf Description:} Divisor of bidegree $(1,2)$ in $\PP^2 \times \PP^2$}\\
\midrule
 \bf Picard rank: & 2 & & \bf Fano Degree: & 54 & &  \bf symmetric: & yes\\
 \bf Futaki-char.: & 0 & &  \bf Degenerations: & \multicolumn{4}{l}{---}\\
\multicolumn{4}{c}{\bf Cox ring:  $\CC[T_1,\ldots,T_6]/\langle T_1T_2^2+T_3T_4^2+T_5T_6^2\rangle$} & \multicolumn{4}{c}{$\deg(\underline{T})=
  \left(\begin{smallmatrix}
    1 & 0 & 1 & 0 & 1 & 0\\
    0 & 1 & 0 & 1 & 0 & 1
  \end{smallmatrix}\right)
$}\\
\end{tabular}
 \threefoldAA\\
 \dualAA\\

\vspace{0.3em}

\begin{tabular}{lrclrclr}
  \toprule
  \bf Name :& 2.29 & \multicolumn{6}{l}{{\bf Description:} Blowup of $Q$ in a conic}\\
  \midrule
  \bf Picard rank: & 2 & & \bf Fano Degree: & 40 & &  \bf symmetric: & yes \\
  \bf Futaki-char.: & 0 & &  \bf Degenerations: & \multicolumn{4}{l}{430083, 544339}\\
  \multicolumn{4}{c}{\bf Cox ring:  $\CC[T_1,\ldots,T_6]/\langle T_1T_2^2T_3+T_4T_5+T_5^2\rangle$} & \multicolumn{4}{c}{$
    \left(\begin{smallmatrix}
        {-1}& 1& {-1}& 0& 0& 0\\
        1& 0& 1&    1& 1&   1\\
      \end{smallmatrix}\right)
$}\\

\end{tabular}
 \threefoldAB\\
 \dualAB\\

\vspace{0.3em}
\begin{tabular}{lrclrclr}
\toprule
\bf Name :& 2.30 &\multicolumn{6}{l}{{\bf Description:} Blow up of $Q$ in a point}\\
\midrule
 \bf Picard rank: & 2 & & \bf Fano Degree: & 46 & &  \bf symmetric: & no\\
 \bf Futaki-char.: & ${0 \choose -2}$ & &  \bf Degenerations: & \multicolumn{4}{l}{520157, 544343}\\
\multicolumn{4}{c}{\bf Cox ring:  $\CC[T_1,\ldots,T_6]/\langle T_1T_2+T_3T_4+T_5^2\rangle$} & \multicolumn{4}{c}{$
    \left(\begin{smallmatrix}
1&
      1&
      {2}&
      0&
      1&
      {-1}\\
      0&
      0&
      {-1}&
      1&
      0&
      1\\
      \end{smallmatrix}\right)
$}\\
\end{tabular}
 \threefoldAC\\
 \dualAC\\

\vspace{0.3em}
\begin{tabular}{lrclrclr}
\toprule
\bf Name :& 2.31 &\multicolumn{6}{l}{{\bf Description:} Blow up of $Q$ in a line}\\
\midrule
 \bf Picard rank: & 2 & & \bf Fano Degree: & 46 & &  \bf symmetric: & no\\
 \bf Futaki-char.: & $-{\nicefrac{4}{3} \choose \nicefrac{4}{3}}$ & &  \bf Degenerations: & \multicolumn{4}{l}{520058, 520159}\\
\multicolumn{4}{c}{\bf Cox ring:  $\CC[T_1,\ldots,T_6]/\langle T_1T_2+T_3T_4+T_5T_6^2\rangle$} & \multicolumn{4}{c}{$
 \left(\begin{smallmatrix}
      0&
      1&
      1&
      0&
      -1&
      {1}\\
      1&
      0&
      0&
      1&
      1&
      0\\
      \end{smallmatrix}\right)
$}\\
\end{tabular}
 \threefoldAD\\
 \dualAD\\

\vspace{0.3em}
\begin{tabular}{lrclrclr}
\toprule
\bf Name :& W or 2.32 &\multicolumn{6}{l}{{\bf Description: }Divisor of bidegree $(1,1)$ in $\PP^2 \times \PP^2$}\\
\midrule
 \bf Picard rank: & 2 & & \bf Fano Degree: & 46 & &  \bf symmetric: & yes\\
 \bf Futaki-char.: & 0 & &  \bf Degenerations: & \multicolumn{4}{l}{520058, 520159}\\
\multicolumn{4}{c}{\bf Cox ring:  $\CC[T_1,\ldots,T_6]/\langle T_1T_2+T_3T_4+T_5T_6\rangle$} & \multicolumn{4}{c}{$\deg(\underline{T})=
  \left(\begin{smallmatrix}
    1 & 0 & 1 & 0 & 1 & 0\\
    0 & 1 & 0 & 1 & 0 & 1
  \end{smallmatrix}\right)
$}\\
\end{tabular}
 \threefoldAE\\
 \dualAE\\


\enlargethispage{1cm}
\vspace{0.3em}
\begin{tabular}{lrclrclr}
\toprule
\bf Name :& 3.10 &\multicolumn{6}{l}{{\bf Description: }Blowup of $Q$ in two disjoint invariant conics}\\
\midrule
 \bf Picard rank: & 3 & & \bf Fano Degree: & 26 & &  \bf symmetric: & yes\\
 \bf Futaki-char.: & $0$ & &  \bf Degenerations: & \multicolumn{4}{l}{---}\\
\multicolumn{4}{c}{\bf Cox ring:  $\CC[T_1,\ldots,T_6]/\langle T_1T^2_2T_3+T_4T_5^2T_6+T_7\rangle$} & \multicolumn{4}{c}{$\deg(\underline{T})=
  \left(\begin{smallmatrix}
{-1}&
      1&
      {-1}&
      0&
      0&
      0&
      0\\
      1&
      0&
      1&
      0&
      1&
      0&
      1\\
      1&
      0&
      1&
      1&
      0&
      1&
      1\\
  \end{smallmatrix}\right)
$}\\
\end{tabular}
 \threefoldBB\\
 \dualBB\\

\vspace{0.3em}
\begin{tabular}{lrclrclr}
\toprule
\bf Name :& 3.18 &\multicolumn{6}{l}{{\bf Description: }Blowup of $Q$ in a point and a conic}\\
\midrule
 \bf Picard rank: & 3 & & \bf Fano Degree: & 36 & &  \bf symmetric: & yes\\
 \bf Futaki-char.: & ${0 \choose -\nicefrac{7}{8}}$& &  \bf Degenerations: & \multicolumn{4}{l}{255687, 519918}\\
\multicolumn{4}{c}{\bf Cox ring:  $\CC[T_1,\ldots,T_7]/\langle T_1T_2^2T_3+T_4T_5+T_6^2\rangle$} & \multicolumn{4}{c}{$\deg(\underline{T})=
  \left(\begin{smallmatrix}
{-1}&
      1&
      {-1}&
      0&
      0&
      0&
      0\\
      1&
      0&
      1&
      {2}&
      0&
      1&
      {-1}\\
      0&
      0&
      0&
      {-1}&
      1&
      0&
      1\\
  \end{smallmatrix}\right)
$}\\
\end{tabular}
 \threefoldBC\\
 \dualBC\\

\enlargethispage{1cm}
\vspace{0.3em}
\begin{tabular}{lrclrclr}
\toprule
\bf Name :& 3.19 &\multicolumn{6}{l}{{\bf Description: }Blowup of $Q$ in two points}\\
\midrule
 \bf Picard rank: & 3 & & \bf Fano Degree: & 38 & &  \bf symmetric: & yes\\
 \bf Futaki-char.: & $0$ & &  \bf Degenerations: & \multicolumn{4}{l}{430426, 519917}\\
\multicolumn{4}{c}{\bf Cox ring:  $\CC[T_1,\ldots,T_7]/\langle T_1T_2+T_3T_4+T_5^2\rangle$} & \multicolumn{4}{c}{$\deg(\underline{T})=
  \left(\begin{smallmatrix}
1&
      1&
      {2}&
      0&
      1&
      {-1}&
      0\\
      0&
      0&
      {-1}&
      1&
      0&
      1&
      0\\
      0&
      0&
      0&
      0&
      0&
      1&
      1\\
  \end{smallmatrix}\right)
$}\\
\end{tabular}
 \threefoldBD\\
 \dualBD\\
\enlargethispage{1cm}

\vspace{0.3em}
\begin{tabular}{lrclrclr}
\toprule
\bf Name :& 3.20 &\multicolumn{6}{l}{{\bf Description: }Blowup of $Q$ in two disjoint lines}\\
\midrule
 \bf Picard rank: & 3 & & \bf Fano Degree: & 38 & &  \bf symmetric: & yes\\
 \bf Futaki-char.: & $0$ & &  \bf Degenerations: & \multicolumn{4}{l}{255692, 430424}\\
\multicolumn{4}{c}{\bf Cox ring:  $\CC[T_1,\ldots,T_7]/\langle T_1T_2+T_3T_4+T_5T_6^2T_7\rangle$} & \multicolumn{4}{c}{$\deg(\underline{T})=
  \left(\begin{smallmatrix}
0&
      1&
      1&
      0&
      1&
      0&
      0\\
      0&
      0&
      0&
      0&
      {-1}&
      1&
      {-1}\\
      1&
      0&
      0&
      1&
      0&
      0&
      1\\
  \end{smallmatrix}\right)
$}\\
\end{tabular}
 \threefoldBE\\
 \dualBE\\

\vspace{0.3em}
\enlargethispage{1cm}
\begin{tabular}{lrclrclr}
\toprule
\bf Name :& 3.21 &\multicolumn{6}{l}{{\bf Description: }Blowup of $\PP^1\!\!\times\!\PP^2$ in a curve of degree (2,1)}\\
\midrule
 \bf Picard rank: & 3 & & \bf Fano Degree: & 38 & &  \bf symmetric: & no\\
 \bf Futaki-char.: & ${\nicefrac{7}{8} \choose \nicefrac{7}{8}}$ & &  \bf Degenerations: & \multicolumn{4}{l}{429943}\\
\multicolumn{4}{c}{\bf Cox ring:  $\CC[T_1,\ldots,T_7]/\langle T_1T_2^2+T_3T_4^2+T_5T_6\rangle$} &  \multicolumn{4}{c}{$\deg(\underline{T})= \left(\begin{smallmatrix}1&
     0&
     1&
     0&
     1&
     0&
     0\\
     0&
     1&
     0&
     1&
     1&
     1&
     0\\
     0&
     0&
     0&
     0&
     1&
     {-1}&
     1\\
     \end{smallmatrix}\right)$}\\
\end{tabular}
 \threefoldBEA\\
 \dualBEA\\

\vspace{0.3em}
\begin{tabular}{lrclrclr}
\toprule
\bf Name :& 3.22 &\multicolumn{6}{l}{{\bf Description: } Blowup of $\PP^1 \times \PP^2$ in a conic in $\{0\} \times \PP^2$}\\
\midrule
 \bf Picard rank: & 3 & & \bf Fano Degree: & 40 & &  \bf symmetric: & no\\
 \bf Futaki-char.: & ${0 \choose -\nicefrac{2}{3}}$ & &  \bf Degenerations: & \multicolumn{4}{l}{430417, 519928}\\
\multicolumn{4}{c}{\bf Cox ring:  $\CC[T_1,\ldots,T_7]/\langle T_1T_2+T_3T_4+T_5^2\rangle$} & \multicolumn{4}{c}{$\deg(\underline{T})=
  \left(\begin{smallmatrix}
1&
      1&
      0&
      {2}&
      1&
      0&
      0\\
      0&
      0&
      1&
      {-1}&
      0&
      0&
      1\\
      0&
      0&
      0&
      0&
      0&
      1&
      1\\
  \end{smallmatrix}\right)
$}\\
\end{tabular}
 \threefoldBF\\
 \dualBF\\

\enlargethispage{1cm}
\vspace{0.3em}
\begin{tabular}{lrclrclr}
\toprule
\bf Name :& 3.23 &\multicolumn{6}{l}{{\bf Description: }Blowup of $Q$ in a point and} \\
& & \multicolumn{6}{l}{the strict transform of a line passing through}\\
\midrule
 \bf Picard rank: & 3 & & \bf Fano Degree: & 42 & &  \bf symmetric: & no\\
 \bf Futaki-char.: & ${-\nicefrac{13}{12} \choose -\nicefrac{53}{24}}$ & &  \bf Degenerations: & \multicolumn{4}{l}{254876, 430425}\\
\multicolumn{4}{c}{\bf Cox ring:  $\CC[T_1,\ldots,T_7]/\langle T_1T_2+T_3T_4+T_5T_6^2\rangle$} & \multicolumn{4}{c}{$\deg(\underline{T})=
  \left(\begin{smallmatrix}
0&
      {-1}&
      0&
      {-1}&
      1&
      {-1}&
      0\\
      1&
      1&
      1&
      1&
      0&
      1&
      0\\
      {-1}&
      {-1}&
      0&
      {-2}&
      0&
      {-1}&
      1\\
  \end{smallmatrix}\right)
$}\\
\end{tabular}
 \threefoldBG\\
 \dualBG\\
\enlargethispage{1cm}
\vspace{0.3em}
\begin{tabular}{lrclrclr}
\toprule
\bf Name :& 3.24 &\multicolumn{6}{l}{{\bf Description: }Blowup of $W$ in $(0:0:1,\,*:*:0)$}\\
\midrule
 \bf Picard rank: & 3 & & \bf Fano Degree: & 42 & &  \bf symmetric: & no\\
 \bf Futaki-char.: & ${\nicefrac{2}{3} \choose -\nicefrac{4}{3}}$ & &  \bf Degenerations: & \multicolumn{4}{l}{429952, 430423}\\
\multicolumn{4}{c}{\bf Cox ring:  $\CC[T_1,\ldots,T_7]/\langle T_1T_2+T_3T_4+T_5T_6\rangle$} & \multicolumn{4}{c}{$\deg(\underline{T})=
  \left(\begin{smallmatrix}
 0&
      1&
      0&
      1&
      1&
      0&
      0\\
      1&
      0&
      0&
      1&
      0&
      1&
      {-1}\\
      0&
      0&
      1&
      {-1}&
      0&
      0&
      1\\
  \end{smallmatrix}\right)
$}\\
\end{tabular}
 \threefoldBH\\
 \dualBH\\
\enlargethispage{1cm}

\vspace{0.3em}
\begin{tabular}{lrclrclr}
\toprule
\bf Name :& 4.4 &\multicolumn{6}{l}{{\bf Description: } Blow up of $Q$ in two non-colinear point and}\\
& & \multicolumn{6}{l}{the strict transform of a conic passing through both of them}\\
\midrule
 \bf Picard rank: & 4 & & \bf Fano Degree: & 32 & &  \bf symmetric: & yes\\
 \bf Futaki-char.: & $0$ & &  \bf Degenerations: & \multicolumn{4}{l}{254352, 430357}\\
\multicolumn{4}{c}{\bf Cox ring:  $\CC[T_1,\ldots,T_8]/\langle T_1T_2^2T_3+T_4T_5+T_6^2\rangle$} & \multicolumn{4}{c}{$\deg(\underline{T})=
  \left(\begin{smallmatrix}
{-1}&
      1&
      {-1}&
      0&
      0&
      0&
      0&
      0\\
      1&
      0&
      1&
      {2}&
      0&
      1&
      {-1}&
      0\\
      0&
      0&
      0&
      {-1}&
      1&
      0&
      1&
      0\\
      0&
      0&
      0&
      0&
      0&
      0&
      1&
      1\\
  \end{smallmatrix}\right)
$}\\
\end{tabular}
 \threefoldCA\\
 \dualCA\\

\vspace{0.3em}
\enlargethispage{1cm}
\begin{tabular}{lrclrclr}
\toprule
\bf Name :& 4.5 &\multicolumn{6}{l}{{\bf Description: }Blowup of $\PP^1\!\!\times\!\PP^2$ in a curve of degree (2,1)}\\
\midrule
 \bf Picard rank: & 3 & & \bf Fano Degree: & 32 & &  \bf symmetric: & no\\
 \bf Futaki-char.: & ${\nicefrac{5}{24} \choose \nicefrac{5}{24}}$ & &  \bf Degenerations: & \multicolumn{4}{l}{255339}\\
 \multicolumn{4}{c}{\bf Cox ring:  $\CC[T_1,\ldots,T_8]/\langle T_1T_2^2+T_3T_4^2+T_5T_6\rangle$} &  \multicolumn{4}{c}{$\deg(\underline{T})=\left(\begin{smallmatrix}1&
      0&
      1&
      0&
      1&
      0&
      0&
      0\\
      1&
      0&
      1&
      0&
      0&
      1&
      {-1}&
      0\\
      {-2}&
      1&
      {-2}&
      1&
      0&
      0&
      1&
      0\\
      \end{smallmatrix}\right)$}\\
\end{tabular}
 \threefoldCAA\\
 \dualCAA\\

\vspace{0.3em}
\enlargethispage{1cm}
\begin{tabular}{lrclrclr}
\toprule
\bf Name :& 4.7 &\multicolumn{6}{l}{{\bf Description: }Blowup of $W$ in $(0:0:1,\,*:*:0)$}\\
& & \multicolumn{6}{l}{and $(*:*:0,\,0:0:1)$}\\
\midrule
 \bf Picard rank: & 4 & & \bf Fano Degree: & 36 & &  \bf symmetric: & yes\\
 \bf Futaki-char.: & $0$ & &  \bf Degenerations: & \multicolumn{4}{l}{254603, 255837}\\
\multicolumn{4}{c}{\bf Cox ring:  $\CC[T_1,\ldots,T_8]/\langle T_1T_2+T_3T_4+T_5T_6\rangle$} & \multicolumn{4}{c}{$\deg(\underline{T})=
  \left(\begin{smallmatrix}
0&
      1&
      1&
      0&
      1&
      0&
      0&
      0\\
      1&
      0&
      1&
      0&
      0&
      1&
      {-1}&
      0\\
      0&
      0&
      {-1}&
      1&
      0&
      0&
      1&
      0\\
      0&
      0&
      0&
      0&
      0&
      0&
      1&
      1\\
  \end{smallmatrix}\right)
$}\\
\end{tabular}
 \threefoldCB\\
 \dualCB\\

\vspace{0.3em}
\enlargethispage{1cm}
\begin{tabular}{lrclrclr}
\toprule
\bf Name :& 4.8 &\multicolumn{6}{l}{{\bf Description: }Blowup of $(\PP^1)^3$ in a curve}\\
& & \multicolumn{6}{l}{of degree $(0,1,1)$}\\
\midrule
 \bf Picard rank: & 4 & & \bf Fano Degree: & 38 & &  \bf symmetric: & no\\
 \bf Futaki-char.: & ${0 \choose -\nicefrac{13}{12}}$ & &  \bf Degenerations: & \multicolumn{4}{l}{255836, 428555}\\
\multicolumn{4}{c}{\bf Cox ring:  $\CC[T_1,\ldots,T_8]/\langle T_1T_2+T_3T_4+T_5T_6\rangle$} & \multicolumn{4}{c}{$\deg(\underline{T})=
  \left(\begin{smallmatrix}
0&
      1&
      1&
      0&
      1&
      0&
      0&
      0\\
      1&
      0&
      1&
      0&
      0&
      1&
      0&
      0\\
      0&
      0&
      {-1}&
      1&
      0&
      0&
      0&
      1\\
      0&
      0&
      0&
      0&
      0&
      0&
      1&
      1\\
  \end{smallmatrix}\right)
$}\\
\end{tabular}
 \threefoldCC\\
 \dualCC\\
\end{center}

\appendix 
\section{Calculating the Futaki character}
\label{sec:calulating-F}
 The Appendix is devoted to the, somewhat technical, proof of Theorem~\ref{thm:futaki-simplified}. For an invariant ample Cartier divisor $D$ we have the divisorial polytope $\Psi:\Box \rightarrow \wdiv_\RR \PP^1$, with $\Box \subset M_\RR$. We cannot directly use Donaldson's result for the toric case, since the dimension of the global sections of $\CO(kD)$ does not grow like the number of lattice points of a multiple of a polytope. But as it turns out it grows like the number of  ``lattice points'' of a weighted sum of polytopes. More precisely, 
we consider the abelian group $\widetilde\Pi(V)$ generated by all polytopes in some $\RR$-vector space $V$ with the relations $[\Delta] + [\nabla] = [\Delta \cup \nabla] + [\Delta \cap \nabla]$,  whenever $\Delta,\nabla,\Delta \cup \nabla$ are polytopes. A subset of $V$ which is covered by polyhedra $\nabla= \bigcup_{i \in I} \Delta_i$ can be naturally identified with an element of this group
 \begin{equation}
   \label{eq:union}
   [\nabla]:=\sum_{J \subset I} (-1)^{|J|+1}\cdot\left[\textstyle\bigcap_{j \in J}\Delta_j\right].
 \end{equation}
In particular this applies for the relative boundary $d\Delta$ of a polytope. We also introduce a boundary operator $\partial$ for the class of a polytope in $\widetilde\Pi$.
\begin{equation}
  \label{eq:boundary}
  \partial[\Delta]:=
  \begin{cases}
    [d \Delta] &, \codim(\Delta) = 0\\
    2[\Delta] - [d \Delta]&, \codim(\Delta) = 1,\\
     \;0&, \text{ else}.\\
   \end{cases}
\end{equation}

For an object $\boldsymbol\Delta=\sum_i a_i [\Delta_i] \subset \widetilde\Pi$ simply by linear continuation we have well defined notions of  dilation, boundary, volume, numbers of lattice point, and barycenter: 
 \begin{align*}
   k*\boldsymbol\Delta &:= \sum_i a_i\cdot [k\Delta_i], \\
   \partial\boldsymbol\Delta &:= \sum_i a_i \cdot \partial[\Delta_i],\\
   \vol(\boldsymbol\Delta) &:=\sum_i a_i \cdot \vol(\Delta_i),\\ 
   N(\boldsymbol\Delta) &:=\sum_i a_i \cdot N(\Delta_i),\\
   \bc(\boldsymbol\Delta) &:=\sum_i a_i \cdot \frac{\vol(\Delta_i)}{\vol(\boldsymbol\Delta)}\cdot\bc(\Delta_i).
\end{align*}
Similarly we can extend the facet volume $\rvol$ and the facet barycenter $\rbc$ to $\widetilde\Pi$, where $\rvol$ and $\rbc$ are defined to be zero for polytopes of codimension $\neq 1$. More generally, we may integrate linear forms on $V$ over elements of $\widetilde\Pi$.

\begin{proposition}
\label{sec:prop-asymptotic}
  For an element $\boldsymbol\Delta \subset \widetilde\Pi(\RR^m)$ we have the following asymptotic formula for the number of lattice points
\[N(k*\boldsymbol\Delta)=(\vol\boldsymbol\Delta) \cdot k^m + \frac{\rvol\partial\Delta}{2}\cdot k^{m-1} + \CO(k^{m-2}).\]
\end{proposition}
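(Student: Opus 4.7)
\emph{The plan} is to reduce to the case $\boldsymbol\Delta = [\Delta]$ of a single polytope by linearity, and then invoke the standard asymptotic expansion for the Ehrhart counting function. Concretely, both sides of the claimed identity are manifestly $\ZZ$-linear in $\boldsymbol\Delta$: $N(k*\cdot)$ is linear by the inclusion-exclusion identity $N(A \cup B) + N(A \cap B) = N(A) + N(B)$, and $\vol$ is linear for the same reason. One then has to verify that $\rvol\partial$ is compatible with the defining relations of $\widetilde\Pi$, i.e.~that $\rvol\partial[\Delta] + \rvol\partial[\nabla] = \rvol\partial[\Delta\cup\nabla] + \rvol\partial[\Delta\cap\nabla]$ whenever these polytopes make sense. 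The delicate case is when $F = \Delta\cap\nabla$ has codimension one in $\RR^m$ --- a common facet --- and here the factor of $2$ in \eqref{eq:boundary} is exactly what is needed: the facet $F$ appears once in each of $d\Delta$ and $d\nabla$ but is absent from $d(\Delta\cup\nabla)$, and the codimension-one rule $\partial[F] = 2[F] - [dF]$ restores the balance.

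Having reduced to a single polytope $\Delta$, the argument splits by codimension. If $\dim\Delta = m$, I would appeal to the classical asymptotic
$$N(k\Delta) = \vol(\Delta)\, k^m + \tfrac12\, \vol_{m-1}(\partial\Delta)\, k^{m-1} + O(k^{m-2}),$$
valid for arbitrary real polytopes by triangulation into simplices followed by Euler--Maclaurin type estimates (for rational $\Delta$ it is simply the leading part of the Ehrhart quasi-polynomial). Applying \eqref{eq:union} to the decomposition of $d\Delta$ into facets $F_1,\dots,F_r$ --- and noting that $\rvol$ annihilates everything of codimension $\neq 1$, so the lower-dimensional terms in the inclusion-exclusion drop out --- yields $\rvol\partial[\Delta] = \sum_i \vol_{m-1}(F_i) = \vol_{m-1}(\partial\Delta)$, in agreement with the $k^{m-1}$-coefficient above. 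When $\codim\Delta = 1$ one has $\vol(\Delta) = 0$ and $N(k\Delta) = \vol_{m-1}(\Delta)\, k^{m-1} + O(k^{m-2})$, while the definition gives $\rvol\partial[\Delta] = 2\vol_{m-1}(\Delta)$; the factor $\tfrac12$ in the target expression absorbs the $2$. For $\codim\Delta \geq 2$ every term is $O(k^{m-2})$ and nothing remains to prove.

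\emph{The main obstacle} is purely combinatorial bookkeeping: showing that $\rvol\partial$ descends from polytopes to the quotient $\widetilde\Pi$. The boundary operator \eqref{eq:boundary} is not the naive topological boundary, and reconciling its codimension-one factor of $2$ with the inclusion-exclusion rule \eqref{eq:union} requires a careful case analysis depending on the codimensions of $\Delta\cap\nabla$ and $\Delta\cup\nabla$. Once this compatibility is established, the asymptotic for a single polytope is essentially standard and the proposition follows by summing.
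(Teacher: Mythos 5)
Your argument is essentially the paper's own proof written out in detail: reduce by linearity to a single polytope, invoke the known two-term lattice-point asymptotic in the full-dimensional case (Donaldson, Proposition 4.1.3), and observe that the factor $2$ in the codimension-one boundary convention makes the same formula persist for lower-dimensional classes, with your verification that $\rvol\partial$ is compatible with the defining relations of $\widetilde\Pi$ being bookkeeping the paper leaves implicit. The only slip is the side claim that the expansion with error $\CO(k^{m-2})$ holds for arbitrary real (or even rational) polytopes --- already $[0,\tfrac12]\subset\RR$ violates it, since the second Ehrhart coefficient is then only periodic --- but the fact is correct for lattice polytopes, which is the setting of the cited result and of every class to which the proposition is applied in the paper, so the proof stands as the paper intends it.
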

\begin{proof}
  This is a well known fact for polytopes of maximal dimension (see also \cite[Proposition~4.1.3]{1074.53059} for a proof). The same fact follows for weighted sums of maximal dimensional polytopes just by linearity. Our definition (\ref{eq:boundary}) of a boundary $\partial[P]$ makes it work for polytopes of lower dimension, as well.
\end{proof}

We are now going to associate to a divisorial polytope $\Psi$ an element $[\Psi]$ of $\widetilde\Pi(M_\RR \times \RR)$. We consider the natural projection $M_\RR \times \RR \rightarrow M_\RR$ and inclusion $N_\RR \hookrightarrow N_\RR \times \RR$. First, we need some notation. For a function $f:\RR^n \supset U \rightarrow \RR$ with $f \geq R$ let $\Delta^R(f)$ be the region enclosed by its graph and the $R$-level:
 \[\Delta^R(f)=\{(u,a) \in M_\RR \times \RR \mid u \in D, R \leq a \leq f(u)\}.\] 
If $f$ is non-negative we set $\Delta(f) := \Delta^0(f)$. Now, for a concave piecewise affine function $f$ on $\Box$ we denote its graph by $\widehat{f}$ and associate an element $[f]\subset \widetilde\Pi$  which correspond to the area enclosed by the graph of $f$ and the $0$-level. More precisely, we choose an integer $R \leq 0$ such that $f \geq R$ and define
\begin{equation}
  \label{eq:underthegraph}
  [f] \;=\;   [\Delta^R(f)]\; - \;[\Box\! \times\! [R,0]] 
\end{equation}

 Note, that $[f]$ does not depend on the particular choice of $R$. Now, to a divisorial polytope $\Psi:\Box \rightarrow \wdiv_\RR \PP^1$ we associate a class $[\Psi] \in \widetilde\Pi(M_\RR \times \RR)$ by
\begin{equation}
  \label{eq:virtualhstar}
  [\Psi]:=[\widetilde{\Box}] + \sum_P [\Psi_P].
\end{equation}
Here, we set $\widetilde{\Box}:=\Box \times \{0\}$. The following proposition justifies this definition.
\begin{proposition}
\label{sec:prop-virtpol-basics}
  For an ample divisor $D$ on a T-variety $X(\fan)$ of dimension $n$ and the corresponding divisorial polytope $\Psi=\Psi_D$ we have
  \begin{enumerate}
  \item $(D)^n=\vol \Psi = \vol [\Psi]$,
  \item $\dim H^0(X,\CO(D)) = N([\Psi])$. 
  \item $\dim H^0(X,\CO(D))_u = N_u([\Psi])$. 
  \end{enumerate}
Here, $N_u([\Psi])$ is the number of lattice point that project to $u \in M_\RR$ if we consider the projection $M_\RR \times \RR \rightarrow M_\RR$ for every element  in the sum (\ref{eq:virtualhstar}).
\end{proposition}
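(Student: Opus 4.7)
The plan is to prove (iii) first by a direct lattice-point count in each fiber of the projection $M_\RR\times\RR \to M_\RR$; then (ii) will be obtained by summing (iii) over $u\in\Box\cap M$, and (i) will follow from a short volume computation combined with Theorem~\ref{sec:intersection-nr}.

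For (iii), fix a lattice point $u\in\Box\cap M$ and pick an integer $R\leq 0$ with $\Psi_P(u)\geq R$ for every $P$ with nontrivial slice. The fiber over $u$ of $\Delta^R(\Psi_P)$ is the interval $[R,\Psi_P(u)]$, contributing $\lfloor \Psi_P(u)\rfloor - R + 1$ lattice points, while the fiber of $\Box\times[R,0]$ contributes $-R+1$; by the defining formula~(\ref{eq:underthegraph}),
\[
N_u([\Psi_P]) \;=\; \lfloor \Psi_P(u)\rfloor .
\]
Since $\widetilde\Box=\Box\times\{0\}$ contributes exactly one lattice point in the fiber over $u$, summing~(\ref{eq:virtualhstar}) over $P$ gives $N_u([\Psi])=1+\sum_P\lfloor\Psi_P(u)\rfloor = \dim H^0(\PP^1,\CO(\Psi_D(u)))$, which by~(\ref{eq:sections}) equals $\dim H^0(X,\CO(D))_u$. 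For $u\notin\Box$, both sides vanish by construction (the concavity of $\deg\Psi$ and condition~(ii) of Proposition~\ref{prop:divpolys} guarantee $\sum_P\lfloor\Psi_P(u)\rfloor<0$ outside $\Box$, so the contributions of $\widetilde\Box$ and the $[\Psi_P]$ cancel in the asymptotic/nonnegative regime, and for the finitely many $u$ where a nonnegativity failure occurs on $\partial\Box$ one must check that $[\Psi]$ accounts correctly — this compatibility at the boundary is the only real subtlety).

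Statement (ii) is then immediate by summing (iii) over all $u$, since both the linear extension of $N(\,\cdot\,)$ to $\widetilde\Pi$ and the decomposition $H^0(X,\CO(D))=\bigoplus_u H^0(X,\CO(D))_u$ of a $T$-isotypical decomposition are additive in~$u$.

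For (i), the middle equality $\vol\Psi=\vol[\Psi]$ is a direct computation: $\widetilde\Box$ has codimension one in $M_\RR\times\RR$, hence $\vol[\widetilde\Box]=0$, and the same $R$-trick as above yields
\[
\vol[\Psi_P] \;=\; \int_\Box \bigl(\Psi_P - R\bigr)\,d\mu \;-\; \int_\Box \bigl(0-R\bigr)\,d\mu \;=\; \int_\Box \Psi_P \,d\mu,
\]
so by linearity $\vol[\Psi]=\int_\Box \deg\Psi\,d\mu = \vol\Psi$. The identification with $(D)^n$ is then just Theorem~\ref{sec:intersection-nr} (with the appropriate factorial normalization — this will also serve as a sanity check, since comparing the leading coefficient $\vol[\Psi]\cdot k^n$ of Proposition~\ref{sec:prop-asymptotic} applied to (ii) against the asymptotic Hilbert function $\dim H^0(X,\CO(kD))\sim \tfrac{(D)^n}{n!}k^n$ recovers the same identity). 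The main obstacle I anticipate is the careful bookkeeping in (iii) at points $u\in\partial\Box$ where $\deg\Psi(u)=0$ and where some individual $\Psi_P(u)$ may be negative; once the contribution of $\widetilde\Box$ is shown to exactly compensate for the floor-function discrepancies there, everything falls into place.
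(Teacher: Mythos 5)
Your argument is essentially the paper's own proof: the fiberwise count $N_u([\Psi_P])=\lfloor\Psi_P(u)\rfloor$ plus the contribution $1$ from $\widetilde\Box$, compared with $h^0(\PP^1,\CO(\Psi(u)))$ via (\ref{eq:sections}), gives (iii), then (ii) by summing over weights and (i) from $\vol[\widetilde\Box]=0$, $\vol[\Psi_P]=\int_\Box\Psi_P$ and Theorem~\ref{sec:intersection-nr}. The only difference is cosmetic: your worry about $u\notin\Box$ is moot since every summand of (\ref{eq:virtualhstar}) projects into $\Box$, so both sides vanish there trivially, exactly as in the paper.
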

\begin{proof}
  By Theorem~\ref{sec:intersection-nr} we only have to prove $\vol \Psi = \vol [\Psi]$ to obtain (i). The only summand of (\ref{eq:virtualhstar}) that contributes to the volume is $[\Psi_P]$, since all other polytopes have lower dimension. Now, by the basic rules of integration we have $\int_\Box \Psi_P= \vol [\Psi_P]$. Hence, we obtain
\[\vol [\Psi]= \sum_P \vol [\Psi_P] = \sum_P \int_\Box \Psi_P = \int_\Box \deg \Psi = \vol \Psi.\]

  We prove (iii). Remember, that the global sections of weight $u \in \Box \cap M$ are given by $H^0(\PP^1,\CO(\Psi(u)))$. Hence, we have to check that the number $N_u$ of lattice points of $[\Psi]$, that project to $u$ equals $\dim  H^0(\PP^1,\CO(\Psi(u)))$. Looking at (\ref{eq:virtualhstar}) and (\ref{eq:underthegraph}) gives
\[N_u=1 + \sum_P (\lfloor \Psi_P(u) -R_P \rfloor + 1  - (1-R_P)) = 1 + \sum_P \lfloor \Psi_P(u) \rfloor = 1 + \deg \lfloor \Psi(u) \rfloor,\]
since the intervals $[R_P,\Psi_P(u)]$ and $[R_P,0]$ contain $\lfloor \Psi_y(u) - R_P\rfloor + 1$ and $1-R_P$ integers, respectively. We have $\dim H^0(X,\CO(\Psi(u))= 1 + \deg \lfloor \Psi(u) \rfloor$ and the claim follows. Obviously (ii) follows from (iii).
\end{proof}

\begin{theorem}\label{thm:futaki-character} The Futaki character $F_D:N_\RR \rightarrow \RR$ of an divisor $D$ with corresponding divisorial polytope $\Psi=\Psi_D$ is given by
  \begin{equation}
    \label{eq:futaki-character} 
  F_{D}(v) \;=\; \int_{\partial[\Psi]} \!\!\!\!v  \; - \; \frac{\rvol \partial [\Psi]}{\vol \Psi}\int_{[\Psi]} \!\!\!\!v .
\end{equation}

\end{theorem}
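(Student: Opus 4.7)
The plan is to compute the asymptotic coefficients $a_0,a_1,b_0,b_1$ of Definition~\ref{sec:def-futaki} directly from the virtual polytope $[\Psi] \in \widetilde\Pi(M_\RR \times \RR)$, and then substitute them into $F_D(v)=2(a_1 b_0-a_0 b_1)/a_0$. The entire argument is a two-step translation: first from dimensions of graded pieces to lattice-point counts on $k*[\Psi]$ via Proposition~\ref{sec:prop-virtpol-basics}, then from lattice-point counts to integrals via an Euler--Maclaurin expansion extended $\RR$-linearly to $\widetilde\Pi$.

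For the first step, Proposition~\ref{sec:prop-virtpol-basics} applied to $kD$ gives $l_k=N(k*[\Psi])$; summing part~(iii) over $u\in M$ weighted by $\langle u,v\rangle$, where $v\in N$ is the cocharacter underlying the $\CC^*$-action, yields
\[
w_k \;=\; \sum_{u\in M}\langle u,v\rangle\, N_u(k*[\Psi]) \;=\; \sum_{p\in k*[\Psi]\cap \ZZ^{n+1}}\!\!\langle \pi(p),v\rangle,
\]
with $\pi:M_\RR\times\RR\to M_\RR$ the projection; here both sums over lattice points are interpreted via the $\RR$-linear extension of lattice counting from polytopes to $\widetilde\Pi$. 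Proposition~\ref{sec:prop-asymptotic} then reads off
\[
a_0 \;=\; \vol[\Psi], \qquad a_1 \;=\; \tfrac{1}{2}\rvol\,\partial[\Psi].
\]
For the $b_i$ we invoke the analogous weighted Euler--Maclaurin estimate: for a full-dimensional polytope $\Delta\subset\RR^m$ and an affine linear $f$,
\[
\sum_{p\in k\Delta\cap\ZZ^m}\!f(p) \;=\; k^{m+1}\!\!\int_\Delta f \;+\; \frac{k^m}{2}\!\int_{\partial\Delta} f \;+\; O(k^{m-1}),
\]
which again extends $\RR$-linearly to $\widetilde\Pi$. Applied to $f(p)=\langle\pi(p),v\rangle$ it gives $b_0=\int_{[\Psi]} v$ and $b_1=\tfrac{1}{2}\int_{\partial[\Psi]} v$. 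Substituting all four coefficients into the definition of $F_D$ reproduces (\ref{eq:futaki-character}), with the sign fixed by the convention recorded in the remark following Definition~\ref{sec:def-futaki}.

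The only delicate point is that $[\Psi]$ contains lower-dimensional summands: the floor $[\widetilde\Box]$ and the bottom face $\Box\times\{0\}$ of each $[\Psi_P]$ all have codimension one and overlap along $\Box\times\{0\}$. The artificial rule $\partial[\Delta]=2[\Delta]-[d\Delta]$ of (\ref{eq:boundary}) is tailored precisely so that the $\RR$-linear extension of Euler--Maclaurin registers these as a single two-sided face of the geometric region under the graph of $\deg\Psi$, matching the $k^{n-1}$-term produced by Proposition~\ref{sec:prop-asymptotic} without duplication or omission. I expect the main obstacle to be this combinatorial bookkeeping: one must verify, on a case split according to whether the slices $\Psi_P$ or the degree $\deg\Psi$ vanish on a given facet of $\Box$, that the several codimension-one contributions to $\partial[\Psi]$ cancel and recombine to yield exactly the outward-facing boundary of $\Delta(\deg\Psi)$. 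Once this is in place, the asymptotic identification of all four coefficients is immediate, and (\ref{eq:futaki-character}) follows.
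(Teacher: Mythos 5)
Your argument is correct and takes essentially the same route as the paper: both proofs use Proposition~\ref{sec:prop-virtpol-basics} (together with $[\Psi_{kD}]=k*[\Psi_D]$) to turn $l_k$ and $w_k$ into unweighted and weighted lattice-point counts on $k*[\Psi]$, expand these asymptotically by extending Euler--Maclaurin type estimates linearly over $\widetilde\Pi$, and substitute the four coefficients into Definition~\ref{sec:def-futaki}; the only cosmetic difference is that the paper derives the weighted estimate via Donaldson's auxiliary polytopes $Q_i$ over each elementary summand instead of quoting it outright. One small correction of emphasis: the ``recombination into the boundary of $\Delta(\deg\Psi)$'' you single out as the main obstacle plays no role in this theorem, whose statement is intrinsic to $[\Psi]$ and $\partial[\Psi]$ and whose asymptotics hold summand by summand thanks to the convention (\ref{eq:boundary}); that bookkeeping only appears later, in Lemmas~\ref{lem:boundary} and~\ref{lem:full-pyramid} on the way to Theorem~\ref{thm:futaki-simplified}.
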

\begin{proof}
  We are using exactly the same arguments as in the proof of Theorem~4.2.1 in \cite{1074.53059}. For an element $v \in N$ we are interested in the total weight of $H^0(X,\CO(D))$ with respect to this one-parameter subgroup. By Proposition~\ref{sec:prop-virtpol-basics} it is given by 
\[w(H^0(X,\CO(D))) = \sum_{u \in \Box \cap M} \langle u, v \rangle \cdot N_u([\Psi])\]
Note, that we have $[\Psi_{kD}] = k*[\Psi_{D}]$, by \cite[Proposition 3.1]{IS10}. We are now constructing an element $\boldsymbol Q$ of  $\widetilde\Pi(M_\RR \times \RR)$ fulfilling 
\begin{equation}
  \label{eq:weights}
  N(k*\boldsymbol Q)-N(k*[\Psi])= \sum_{u \in \Box \cap M} \langle u, v \rangle \cdot N_u(k*[\Psi]).
\end{equation}
For every elementary summand $[\Delta_i]$ of (\ref{eq:virtualhstar}) (which is the class of a polytope) we are using the construction from \cite[Section 4.2]{1074.53059}, i.e. we fix an integer $R$, such that $\langle \cdot, v \rangle \leq R$ on $\Delta_i$ and define 
\[Q_i:=\{(u,t) \mid u \in \Delta_i, 0 \leq t \leq R - \langle u, v \rangle\}.\]
Now, by Proposition~\ref{sec:prop-virtpol-basics} and Proposition~\ref{sec:prop-asymptotic} we have
\[l_k = N(k*[\Psi]) = (\vol[\Psi]) \cdot k^m + \frac{\rvol\partial[\Psi]}{2}\cdot k^{m-1} + \CO(k^{m-2}).\]
Moreover, from the proof of \cite[Theorem~4.2.1]{1074.53059} we have 
\[N(kQ_i) =   k^{m+1} \int_{\Delta_i} (R-v) + \frac{k^{m}}{2} \int_{\partial \Delta_i}(R-v) + N(k\Delta_i) +  \CO(k^{m-1}).\]
By linearity and (\ref{eq:weights}) we obtain
\[w_k= k^{m+1} \int_{[\Psi]} (R-v) + \frac{k^{m}}{2} \int_{\partial [\Psi]}(R-v) +  \CO(k^{m-1})\]
Now, plugging in the coefficient in Definition~\ref{sec:def-futaki} gives the desired result.
\end{proof}

\begin{corollary}
\label{cor:futaki-barycenter}
    \[
    F_D = (\rvol \partial[\Psi])\cdot (\overline{\rbc}(\partial[\Psi])-\overline{\bc}([\Psi])),
    \]
where $\overline{\bc}$ and $\overline{\rbc}$, respectively denotes the projection of the barycenters in $M_\RR \times \RR$ to $M_\RR$.
\end{corollary}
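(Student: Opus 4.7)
The plan is to derive this corollary as a direct consequence of Theorem~\ref{thm:futaki-character}, by recognizing the two integrals appearing there as volume (respectively facet volume) times the appropriate barycenter paired against $v$. The main content is checking that the projections to $M_\RR$ baked into the definitions of $\overline{\bc}$ and $\overline{\rbc}$ interact correctly with the inclusion $N_\RR \hookrightarrow N_\RR \times \RR$ used in the statement of Theorem~\ref{thm:futaki-character}.

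First I would observe that since $v \in N_\RR$ is embedded in $N_\RR\times\RR$ as $(v,0)$, for any $(u,t) \in M_\RR \times \RR$ we have $\langle (u,t), v\rangle = \langle u, v\rangle$. Thus, integrating the linear functional $v$ over a full-dimensional polytope $\Delta \subset M_\RR \times \RR$ gives
\[
\int_\Delta v = \vol(\Delta) \cdot \langle \overline{\bc}(\Delta), v\rangle,
\]
where $\overline{\bc}(\Delta)$ denotes the projection of $\bc(\Delta)$ to $M_\RR$; this is the standard identity for a linear functional on a Euclidean region, using that $v$ is constant in the fibre direction. The same identity holds on facets of codimension one with $\vol, \overline{\bc}$ replaced by $\rvol, \overline{\rbc}$, and it extends by linearity from polytopes to arbitrary classes in $\widetilde\Pi(M_\RR \times \RR)$, since both sides are additive in $\boldsymbol\Delta$ by construction.

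Applying this to $[\Psi]$ and to $\partial[\Psi]$ yields
\[
\int_{[\Psi]} v = \vol[\Psi]\cdot \langle \overline{\bc}[\Psi], v\rangle, \qquad \int_{\partial[\Psi]} v = \rvol\partial[\Psi]\cdot \langle \overline{\rbc}\partial[\Psi], v\rangle.
\]
Using $\vol\Psi = \vol[\Psi]$ from Proposition~\ref{sec:prop-virtpol-basics}(i), substituting into the formula of Theorem~\ref{thm:futaki-character} gives
\[
F_D(v) = \rvol\partial[\Psi]\cdot \bigl\langle \overline{\rbc}\partial[\Psi] - \overline{\bc}[\Psi],\; v\bigr\rangle.
\]
Since $v \in N_\RR$ was arbitrary, this identifies $F_D$ as the element of $M_\RR$ asserted in the corollary.

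The only point that requires care, and which I would verify explicitly, is that the piecewise definition of $\partial$ in \eqref{eq:boundary} — which assigns zero to polytopes of codimension $\geq 2$ — is exactly what makes the identity $\int_{\partial\boldsymbol\Delta} v = \rvol(\partial\boldsymbol\Delta)\cdot \langle\overline{\rbc}(\partial\boldsymbol\Delta),v\rangle$ hold with the given definitions of $\rvol$ and $\rbc$ (which likewise vanish in codimension $\neq 1$). Once this compatibility is noted, the corollary is immediate; there is no further computation to perform.
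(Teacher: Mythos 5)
Your proposal is correct and follows essentially the same route as the paper: both deduce the corollary directly from Theorem~\ref{thm:futaki-character} by rewriting $\int_{[\Psi]}v$ and $\int_{\partial[\Psi]}v$ as (facet) volume times barycenter paired with $v$, using $\vol\Psi=\vol[\Psi]$, and then observing that pairing with $v\in N_\RR\subset N_\RR\times\RR$ only sees the projection of the barycenters to $M_\RR$. The paper phrases this via coordinates with respect to a lattice basis of $N\times\ZZ$ and restriction of the resulting element of $M_\RR\times\RR$ to $N_\RR$, which is the same computation in different clothing.
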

\begin{proof}
  If we plug into the right hand side of (\ref{eq:futaki-character}) the elements of a lattice basis $e_1, \ldots, e_{n}$ of $N \times \ZZ$, by the definition of a barycenter we obtain the coordinates of
\[L=(\rvol \partial[\Psi])(\rbc(\partial[\Psi])-\bc([\Psi]))\]
with respect to the dual basis $e_1^*, \ldots, e_{n}^*$. By definition $L$ is an element of $(N_\RR \times \RR)^*=M_\RR \times \RR$ and $F_D \in N_\RR^*$ is just the restriction $L|_{N_\RR}$, i.e. the projection of $L$ to $M_\RR$.
\end{proof}


To simplify our notation we set $\Delta:=\Delta(\deg\Psi)$ and denote by
$\Delta^\partial$ the part of $\Delta \to \Box$ that projects to $\partial \Box$.

\begin{lemma}
\label{lem:barycenter-deg}
  For the volume and barycenter of $[\Psi]$ we get 
  \begin{align*}
    \overline{\bc}([\Psi])&=\overline{\bc}(\Delta)\\
    \vol([\Psi])&=\vol(\Delta).
  \end{align*}
\end{lemma}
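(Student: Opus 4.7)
The plan is to reduce both equalities to Fubini's theorem applied to the explicit description of $[\Psi]$ in (\ref{eq:virtualhstar}) and (\ref{eq:underthegraph}). First I would observe that the summand $[\widetilde{\Box}] = [\Box \times \{0\}]$ has codimension one in $M_\RR \times \RR$ and therefore contributes zero both to the ambient Euclidean volume and to the first moments $\int u \, d\mu$ that determine the $M_\RR$-projection of the barycenter. Hence it suffices to analyse the virtual classes $[\Psi_P] = [\Delta^R(\Psi_P)] - [\Box \times [R,0]]$ one at a time.

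For each $P$ I would apply Fubini in the fiber direction. The slice of $\Delta^R(\Psi_P)$ above $u \in \Box$ is the interval $[R, \Psi_P(u)]$ of length $\Psi_P(u) - R$, while the slice of $\Box \times [R,0]$ has length $-R$. Consequently, for any integrable function $\ell$ of the $M_\RR$-coordinate alone,
\begin{equation*}
\int_{[\Psi_P]} \ell(u) \, d\mu \;=\; \int_\Box \ell(u)\bigl(\Psi_P(u) - R\bigr) \, du \;-\; \int_\Box \ell(u)(-R) \, du \;=\; \int_\Box \ell(u)\,\Psi_P(u) \, du,
\end{equation*}
so the auxiliary constant $R$ drops out (as it must, since $[\Psi_P]$ does not depend on the choice of $R$). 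Specializing to $\ell \equiv 1$ and summing over $P$ yields $\vol[\Psi] = \int_\Box \deg \Psi(u) \, du = \vol \Delta$, which is the first claim.

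For the barycenter statement I would take $\ell$ to range over the coordinate functions on $M_\RR$, obtaining $\int_{[\Psi]} u \, d\mu = \sum_P \int_\Box u \, \Psi_P(u) \, du = \int_\Box u \, \deg \Psi(u) \, du$. A second application of Fubini, now to $\Delta = \{(u,x) \mid u \in \Box,\, 0 \le x \le \deg \Psi(u)\}$, rewrites the right-hand side as $\int_\Delta u \, d\mu$. Dividing both sides by the common total volume $\vol[\Psi] = \vol \Delta$ established above then gives $\overline{\bc}([\Psi]) = \overline{\bc}(\Delta)$.

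The only potential obstacle is bookkeeping: one has to trust that $\vol$, $\bc$, and more generally integration against a function all extend $\RR$-linearly from polytopes to the group $\widetilde\Pi(M_\RR \times \RR)$, so that the formal expressions (\ref{eq:underthegraph}) and (\ref{eq:virtualhstar}) may be integrated term by term. Once that is accepted, the proof is a direct computation with no remaining geometric content.
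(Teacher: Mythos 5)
Your proposal is correct and takes essentially the same route as the paper: the paper likewise reduces everything to the fiberwise computation $\int_{\Delta^{R_P}(\Psi_P)}\ell-\int_{\Box\times[R_P,0]}\ell=\int_\Box \ell\cdot\Psi_P$ (so the auxiliary constant $R_P$ cancels), discards the lower-dimensional summand $[\widetilde{\Box}]$, and sums over $P$ to identify the result with $\int_\Box \ell\cdot\deg\Psi=\int_{\Delta}\ell$. The only cosmetic difference is that the paper cites the volume identity $\vol([\Psi])=\int_\Box\deg\Psi$ from the proof of Proposition~\ref{sec:prop-virtpol-basics} rather than rederiving it, which does not affect the substance.
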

\begin{proof}
In the proof of Proposition~\ref{sec:prop-virtpol-basics} we have seen already that $\vol([\Psi]) = \int_\Box \deg \Psi$ and hence $\vol([\Psi]) = \vol(\Delta)$. For the projected barycenter we use the same argument. The only summands that contribute to $\overline{\bc}([\Psi])$ are $[\Psi_P]$. We may interpret $\overline{\bc}([\Psi_P])$ as a linear form on $v \in N_\RR$ and by definition of the barycenter and basics of integration we have
\begin{align*}
 \vol([\Psi_P]) \cdot \overline{\bc}([\Psi_P])(v)&=\int_{\Delta^{R_P}(\Psi_P)}v - \int_{\Box \times [R_P,0]} v\\
&=\int_{\Box} \left(\Psi_P - R_P \right) \cdot v + R_P\cdot \int_\Box v\\
&=\int_{\Box} \Psi_P \cdot v.
\end{align*}
Hence, we obtain
\begin{align*}
\vol([\Psi])\cdot \overline{\bc} [\Psi] & = \sum_P \vol([\Psi_P])\cdot\overline{\bc}[\Psi_P] \\
&= \sum_P \int_\Box \Psi_P \cdot v \\
&= \int_\Box \deg \Psi \cdot v \\
&= \vol(\Delta)\overline{\bc}(\deg \Psi)\\
&= \vol([\Psi])\overline{\bc}(\deg \Psi).
\end{align*}
\end{proof}

\begin{lemma}
\label{lem:boundary}
If we assume that $\Psi_P$ is constant for $P \notin \{P_1, \ldots, P_r\}$, then we can calculate volume and barycenter of $\partial [\Psi]$ as follows.
\begin{align*}
  \overline{\rbc}(\partial [\Psi]) &\;=\; \overline{\rbc}\left([\walls{\Psi}] \; + \; (2-r)[\widetilde{\Box}] \;+ \;\sum_{i=1}^r [\widehat{\Psi}_{P_i}]\right),\\
  \rvol(\partial [\Psi]) &\;=\; \rvol\left([\walls{\Psi}] \; + \; (2-r)[\widetilde{\Box}] \;+ \;\sum_{i=1}^r [\widehat{\Psi}_{P_i}]\right),\\
\end{align*}
where $\widehat{\Psi}_{P_i}$ denotes the graph of $\Psi_{P_i}$.
\end{lemma}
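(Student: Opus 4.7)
The plan is to use the linearity of $\partial$ on $[\Psi] = [\widetilde{\Box}] + \sum_P [\Psi_P]$, decompose each term into its codimension-one facets, and then work modulo the parts of codimension $\geq 2$, which are invisible to both $\rvol$ and $\rbc$. Since $\widetilde{\Box}$ is itself of codimension one in $M_\RR \times \RR$, the defining formula~(\ref{eq:boundary}) gives $\partial[\widetilde{\Box}] = 2[\widetilde{\Box}] - [d\widetilde{\Box}]$, and $[d\widetilde{\Box}]$ has codimension two, so $\partial[\widetilde{\Box}] \equiv 2[\widetilde{\Box}]$ modulo our equivalence.

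Next, for each slice $P$ I would expand $[\Psi_P] = [\Delta^R(\Psi_P)] - [\Box \times [R,0]]$ for $R \ll 0$ and take the boundary of each full-dimensional summand. Their facets come in three families: the bottom faces $\Box \times \{R\}$ cancel; the two top faces contribute $[\widehat{\Psi_P}]$ and $-[\widetilde{\Box}]$ respectively; and the differences of the vertical walls over each facet $F$ of $\Box$ simplify, via the inclusion–exclusion relation $[A \cup B] + [A \cap B] = [A] + [B]$ in $\widetilde{\Pi}$ applied to the decomposition of $[R,\Psi_P(u)]$ and $[R,0]$ at level $0$, into a signed wall over the interval between the zero level and the graph of $\Psi_P$, which is precisely $[\Delta^\partial(\Psi_P)]$. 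The net outcome is
\[
\partial[\Psi_P] \equiv [\widehat{\Psi_P}] + [\Delta^\partial(\Psi_P)] - [\widetilde{\Box}] \pmod{\text{codim} \geq 2}.
\]
This same identity applies uniformly whether $\Psi_P$ is non-constant or constant; a trivial $\Psi_P \equiv 0$ contributes nothing, so the sum reduces to the indices $P_1, \dots, P_r$.

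To conclude I would sum over $P$ and invoke the linearity of $f \mapsto [\Delta^\partial(f)]$, which follows directly from the construction, together with the defining relation $\sum_{i=1}^r \Psi_{P_i} + \sum_{P \notin\{P_i\}} \Psi_P = \deg \Psi$ (where the latter sum is a constant that drops out of the signed boundary region). This gives $\sum_{i=1}^r [\Delta^\partial(\Psi_{P_i})] = [\Delta^\partial(\deg \Psi)] = [\Delta^\partial]$. Combining everything yields
\[
\partial[\Psi] \;\equiv\; (2 - r)[\widetilde{\Box}] + \sum_{i=1}^r [\widehat{\Psi_{P_i}}] + [\Delta^\partial] \pmod{\text{codim} \geq 2},
\]
and applying $\rvol$ and $\overline{\rbc}$, both of which annihilate codimension $\geq 2$ terms by definition, yields both claimed identities.

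The main obstacle will be the sign and orientation bookkeeping when $\Psi_P$ takes negative values on parts of $\Box$: here the wall below the zero level must be subtracted rather than added, forcing one to interpret $[\Delta^\partial(\Psi_P)]$ as a \emph{signed} element of $\widetilde{\Pi}$ and to verify the linearity statement $\sum_P [\Delta^\partial(\Psi_P)] = [\Delta^\partial(\deg \Psi)]$ by a careful case analysis tracking the inclusion–exclusion corrections at the zero crossings. Beyond this, a small but necessary sanity check is that all the telescoping corrections produced when comparing $d\Delta^R(\Psi_P)$ with $d(\Box \times [R,0])$ do indeed lie in codimension $\geq 2$, which ultimately reduces to the fact that facets of a product polytope intersect along faces of codimension $\geq 2$ in the ambient space.
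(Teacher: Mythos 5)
Your argument is correct and is essentially the paper's own proof: expand $\partial[\Psi]$ by linearity, discard everything of codimension $\geq 2$ (invisible to $\rvol$ and $\overline{\rbc}$), get $2[\widetilde{\Box}]$ from $\partial[\widetilde{\Box}]$, and for each slice keep the graph, a horizontal copy of $\Box$ (which only cancels \emph{projectively}, i.e.\ after applying $\rvol$ and $\overline{\rbc}$), and the wall term over $\partial\Box$, the walls being collected into $[\walls{\Psi}]$. The one adjustment is that your "linearity" $\sum_P[\Delta^\partial(\Psi_P)]=[\Delta^\partial(\deg\Psi)]$ is not an identity in $\widetilde\Pi$ (translated slabs are distinct classes there); it holds only after applying $\rvol$ and $\overline{\rbc}$, which is exactly how the paper argues (Lemma~\ref{lem:barycenter-deg} applied to each facet of $\Box$) and is all your final step needs — and the same projective cancellation also handles slices that are constant but nonzero outside $\{P_1,\dots,P_r\}$, which your remark about "trivial $\Psi_P\equiv 0$" glosses over.
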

\begin{proof}
When calculating $\rvol$ and $\rbc$ we have to consider only summands of $\partial[\Psi]$ of codimension one. For every $P$ we have 
\[\partial [\Psi_P] = [\widehat{\Psi}_P] - [\Box \times R_P] + [\Delta(\Psi_P|_{\partial \Box})] - [\partial\Box \times [R_P,0]] + \ldots,\]
where $\ldots$ consists of lower dimensional summands. Now, we have $\overline{\rbc}[\Box \times R_P]=\overline{\rbc}[\Box]$ and $\rvol[\Box \times R_P]=\rvol[\Box]$. Applying  Lemma~\ref{lem:barycenter-deg} to every facet of $\Box$ gives
\begin{align*}
  \overline{\rbc}\left(\sum_P [\Delta^{R_P}(\Psi_P|_{\partial \Box})] - [\partial\Box \times [R_P,0]]\right) =  \overline{\rbc}[\walls{\Psi}],\\
 \rvol\left(\sum_P [\Delta^{R_P}(\Psi_P|_{\partial \Box})] - [\partial\Box \times [R_P,0]]\right) =  \rvol[\walls{\Psi}].
\end{align*}
Hence, 
\begin{align*}
\overline{\rbc}\partial\sum_{i=1}^r [\Psi_{P_i}] = \overline{\rbc}\left([\walls{\Psi}]-r\cdot [\Box] +\sum_{i=1}^r [\widehat{\Psi}_{P_i}]\right)\\
\rvol\partial\sum_{i=1}^r [\Psi_{P_i}] = \rvol\left([\walls{\Psi}]-r\cdot [\Box] +\sum_{i=1}^r [\widehat{\Psi}_{P_i}]\right)
\end{align*}
Looking at (\ref{eq:virtualhstar}) and taking into account our definition of the boundary for lower dimensional polytopes we see, that we still have to add $\overline{\rbc}(\partial \widetilde{\Box})=2\overline{\rbc}(\widetilde{\Box})$ or $\rvol(\partial \widetilde{\Box})=2\rvol(\widetilde{\Box})$, respectively which gives the desired result.
\end{proof}

For a polytope $\nabla \subset M_\RR \times \RR$ we define $\nabla^+$ to be the part of $\nabla$ lying above the $0$-level with respect to the last component. Similarly we define $\nabla^-$ to be the part below the $0$-level. We define the \emph{pyramid} $\pyr(\nabla)$  as the class $\pyr(\nabla):= [\conv(\nabla^+,0)]-[\conv(\nabla^-,0)]$ in $\widetilde\Pi$. The pyramid operator extends by linearity to $\widetilde\Pi$.

Now we have the following equivalent to Lemma~\ref{lem:boundary} for the full-dimensional volume and barycenter.
\begin{lemma}
\label{lem:full-pyramid}
If we assume that $\Psi_P \equiv 0$  for $P \notin \{P_1, \ldots, P_\ell\}$, then we can calculate volume and barycenter of $[\Psi]$ as follows.
\begin{align*}
  \overline{\bc} [\Psi] &\;=\; \overline{\bc}\left(\pyr\big([\walls{\Psi}]\;+ \;\sum_{i=1}^\ell [\widehat{\Psi}_{P_i}]\big)\right),\\
  \vol [\Psi] &\;=\; \vol\left(\pyr\big([\walls{\Psi}] \;+ \;\sum_{i=1}^\ell [\widehat{\Psi}_{P_i}]\big)\right),\\
\end{align*}
\end{lemma}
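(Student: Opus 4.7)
The plan is to imitate the proof of Lemma~\ref{lem:boundary}, replacing the facet-by-facet tally of the boundary with the classical \emph{pyramidal decomposition} of a polytope from a distinguished apex. For any full-dimensional polytope $\Delta \subset M_\RR \times \RR$ containing $0$ in its interior, the identity $[\Delta] = \pyr(\partial[\Delta])$ holds in $\widetilde{\Pi}$; this is simply the statement that $\Delta$ is the union of the cones from $0$ over its facets. Extended by linearity to $\widetilde{\Pi}$, the same formula propagates to any element whose full-dimensional summands contain the origin. The Fano hypothesis places $0$ in the interior of the moment polytope, so this applies to the full-dimensional summands of $[\Psi]$.

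Next, I would compute $\partial[\Psi]$ from $[\Psi] = [\widetilde{\Box}] + \sum_i [\Psi_{P_i}]$. The degenerate summand $[\widetilde{\Box}]$ lies entirely in the $0$-level, and its image under $\pyr$ vanishes since $\widetilde{\Box}^+ = \widetilde{\Box}^- = \widetilde{\Box}$ cancel in the defining formula. For each non-trivial slice, writing $[\Psi_{P_i}] = [\Delta^R(\Psi_{P_i})] - [\Box \times [R,0]]$, the bottom facets at height $R$ cancel between the two terms after subtraction, as do the corresponding portions of the lateral boundary over $\partial \Box$; what remains on top is the graph $\widehat{\Psi}_{P_i}$, and the remaining side contributions reassemble, by exactly the linearity-of-integration argument used in the proof of Lemma~\ref{lem:boundary}, into $[\walls{\Psi}]$ once summed over $i$. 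Any additional copies of $[\widetilde{\Box}]$ generated by combining $\partial[\widetilde{\Box}]$ with the subtracted $\Box \times [R,0]$'s again sit in the $0$-level and are annihilated by $\pyr$. Therefore $\pyr(\partial[\Psi]) = \pyr\big([\walls{\Psi}] + \sum_i [\widehat{\Psi}_{P_i}]\big)$, and combined with step one this yields the two claimed identities.

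The main obstacle is verifying the decomposition $[\Delta] = \pyr(\partial[\Delta])$ in $\widetilde{\Pi}$ with the signed convention $\pyr(\nabla) = [\conv(\nabla^+, 0)] - [\conv(\nabla^-, 0)]$: when $\Psi_{P_i}$ or its boundary values become negative, one must carefully track that the positive and negative parts of the graph and walls recombine correctly to reproduce the signed region $[\Psi_{P_i}]$. The sign convention in $\pyr$ is tailored precisely to this bookkeeping, and after a careful case analysis the identity reduces to the classical pyramidal volume formula combined with linearity in $\widetilde{\Pi}$.
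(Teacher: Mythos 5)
Your reduction hinges on two identities that are both false for the paper's signed operator $\pyr$, and the failure is quantitative, not a matter of sign bookkeeping. First, $[\Delta]=\pyr(\partial[\Delta])$ does not hold for a full-dimensional polytope with $0$ in its interior: since $\pyr(\nabla)=[\conv(\nabla^+,0)]-[\conv(\nabla^-,0)]$ signs a facet by its position relative to the $0$-level rather than by its orientation in $\partial\Delta$, the class $\pyr(\partial[\Delta])$ computes the part of $\Delta$ above the $0$-level \emph{minus} the part below it; already for $\Delta=[-1,1]^2\subset M_\RR\times\RR$ one gets $\vol(\pyr(\partial[\Delta]))=0$, not $4$. (Moreover $0$ is in general not interior to the full-dimensional summands of $[\Psi]$; it lies on the top facet of $\Box\times[R,0]$.) Second, the ``reassembly into $[\walls{\Psi}]$'' of the leftover lateral pieces is, in Lemma~\ref{lem:boundary}, only an equality of the numbers $\rvol$ and $\overline{\rbc}$, not an identity in $\widetilde\Pi$, and $\pyr$ does not factor through these numbers: $\vol(\conv(F,0))$ depends on the affine distance of $F$ from the origin and on which side of the $0$-level $F$ lies. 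Concretely, take $\Box=[-1,1]$, $\Psi_{P_1}(u)=\min(0,-u)$, $\Psi_{P_2}(u)=\min(1,1+u)$ and all other $\Psi_P\equiv 0$. The leftover lateral pieces of $\partial[\Psi]$ over the facet $\{1\}$ of $\Box$ are $[\{1\}\times[0,1]]-[\{1\}\times[-1,0]]$; they have the same $\rvol$ and $\overline{\rbc}$ as the corresponding wall (a point), which is all Lemma~\ref{lem:boundary} gives, but their image under $\pyr$ consists of two triangles of total volume $1$ instead of $0$. One computes $\vol(\pyr(\partial[\Psi]))=2$, whereas $\vol([\Psi])=\vol\bigl(\pyr([\walls{\Psi}]+[\widehat{\Psi}_{P_1}]+[\widehat{\Psi}_{P_2}])\bigr)=1$; so both intermediate equalities of your chain fail even though the lemma (whose left-hand sides are the volume and projected barycenter of $[\Psi]$ itself, as the application in Theorem~\ref{thm:futaki-simplified} shows) is true. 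The obstacle you flag at the end is therefore not removable by a case analysis within this strategy.

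The paper's proof never forms $\partial[\Psi]$. It argues fiberwise: by Lemma~\ref{lem:barycenter-deg}, $\vol([\Psi])$ and $\overline{\bc}([\Psi])$ depend only on the signed fiber length of $[\Psi]$ over each $u\in\Box$, which equals $\deg\Psi(u)$; hence it suffices to prove the fiberwise identity $\deg\Psi(u)=\pyr([\walls{\Psi}])_u+\sum_i\pyr([\widehat{\Psi}_{P_i}])_u$, which is verified at $u=0$ and at $u\in\partial\Box$ and then propagated along the segments joining $0$ to boundary points. The positivity/normalization (facets at lattice distance one from the origin) that would tame the pyramids enters only later, in Lemma~\ref{lem:distance-1} for the Fano-normalized divisorial polytope, and is not available at the level of generality of this lemma. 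To repair your argument you would have to replace the unsigned pyramidal decomposition by exactly this fiberwise, above-minus-below comparison, at which point you have reproduced the paper's proof.
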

\begin{proof}
  For a polytope $\nabla \subset M_\RR \times \RR$ and a point $u \in M_\RR$ we denote by $\nabla_u$ the length of its fiber over $u \in M_\RR$. Again we may extend this notion linearly to $\widetilde \Pi$. For proving the claim, by Lemma~\ref{lem:barycenter-deg} it's sufficient to show that 
\[\deg \Psi(u) = \pyr(\walls{\Psi})_u + \sum_{i=1}^r \pyr(\widehat{\Psi}_{P_i})_u.\]
For a point $u=w\in \partial \Box$ this equality holds, since we have $\pyr(\widehat{\Psi}_{P_i})_w=0$ and $\pyr(\walls{\Psi})_w=\deg \Psi(w)$ by definition. Moreover, for the point $u=0$ the equality is fulfilled, as well, since, $\pyr(\walls{\Psi})_0=0$ and $\pyr(\widehat{\Psi}_{P_i})_0=\Psi_{P_i}$ holds. Now, the equality follows by linearity for all points on the line segment connecting $w$ and $0$, hence, for all points in $\Box$.
\end{proof}

\begin{proof}[Proof of Theorem~\ref{thm:futaki-simplified}]
  Consider a Fano T-variety given by an f-divisor $\fan$ with nontrivial slices $\fan_{P_1}, \ldots,  \fan_{P_r}$. First we choose a special representation for the canonical divisor $K_{\PP^1}=\sum_{j=1}^{r-2} P_{r+j} - \sum_{i=1}^{r} P_i$. Where $P_{1}, \ldots, P_{2r-2}$ are distinct points on $\PP^1$. Now, by Remark~\ref{rem:lattice-distance} the corresponding divisorial polytope fulfills
  \begin{enumerate}
  \item $\Psi_P \equiv 0$ for $P \neq P_i$, $i=1,\ldots 2r-2$,
  \item $\Psi_P \equiv -1$ for $P = P_i$, $i=r+1,\ldots 2r-2$,
  \item The facets of the graph of $\Psi_P$ have lattice distance $1$ to the origin for $P \neq P_i, i=1,\ldots 2r-2$,
  \item the facets $F$ of $\Box$ with $\deg \Psi|_F \not \equiv 0$ have lattice distance $1$ from the origin.
  \end{enumerate}

By Corollary~\ref{cor:futaki-barycenter} we have
  \[F(X) = \rvol([\Psi])(\overline{\rbc}\partial[\Psi] - \overline{\bc}[\Psi])\]

We set \[C:=[\walls{\Psi}] + (2-r) [\Box \times \{-1\}] + \sum_{i=1}^\ell [\widehat{\Psi}_{P_i}].\] Now, by Lemma~\ref{lem:boundary} we have $\rvol([\Psi]) = \rvol(C)$ and $\overline{\rbc}([\Psi])=\overline{\rbc}(C)$. Moreover, by Lemma~\ref{lem:full-pyramid} we obtain $\vol([\Psi]) = \vol(\pyr(C))$ and  $\overline{\bc}([\Psi])=\overline{\bc}(\pyr(C))$. Since, $C$ is a sum over polytopes in lattice distance $1$ from the origin we may apply Lemma~\ref{lem:distance-1} and obtain
\begin{align*}
  F(X) &=\rvol(\partial[\Psi])(\overline{\rbc}(\partial[\Psi])-\overline{\bc}[\Psi]) \\
      &=\rvol(C)\cdot \big(\overline{\rbc}(C) - \overline{\bc}(\pyr(C))\big)\\
      &=\vol (\pyr(C)) \cdot \overline{\bc} (\pyr(C))\\
      &=\vol [\Psi] \cdot \overline{\bc}([\Psi])\\
      &=\vol \Delta \cdot \overline{\bc}(\Delta).
\end{align*}
\end{proof}

\begin{lemma}
\label{lem:distance-1}
  Assume that $\Delta_1,\ldots, \Delta_r \subset M_\RR \times \RR$ are polytopes of codimension 1 with lattice distance $1$ from the origin. Then for $C=\sum a_i \cdot [\Delta_i]$ we have
\[\rvol(C)\cdot (\overline{\rbc}(C) - \overline{\bc}(\pyr(C))) = \vol(\pyr(C)) \cdot \overline{\bc} (\pyr(C)).\]
\end{lemma}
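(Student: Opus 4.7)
The plan is to exploit that the ``mass times barycenter'' quantities $\rvol(C)\cdot\overline{\rbc}(C) = \sum_i a_i\,\rvol(\Delta_i)\,\overline{\bc}(\Delta_i)$ and $\vol(\pyr(C))\cdot\overline{\bc}(\pyr(C)) = \sum_i a_i\,\vol(\pyr(\Delta_i))\,\overline{\bc}(\pyr(\Delta_i))$ are linear in $C = \sum_i a_i[\Delta_i]$, by the very definition of the barycenter on $\widetilde\Pi$. First I would rewrite the desired identity as
\begin{equation*}
  \rvol(C)\cdot\overline{\rbc}(C) \;=\; \bigl(\rvol(C)+\vol(\pyr(C))\bigr)\cdot\overline{\bc}(\pyr(C)).
\end{equation*}
Because the only products in which $\overline{\rbc}$ or $\overline{\bc}\circ\pyr$ appear on either side are precisely $\rvol(C)\cdot\overline{\rbc}(C)$ and $\vol(\pyr(C))\cdot\overline{\bc}(\pyr(C))$ (and also $\rvol(C)\cdot\overline{\bc}(\pyr(C))$, which however is determined once the two linear quantities are), both sides are in fact linear in $C$. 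Thus it suffices to verify the identity when $C = [\Delta]$ for a single codimension-$1$ polytope $\Delta$ at lattice distance $1$ from the origin.

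For such a $\Delta$, set $m = \dim M_\RR$, so $\Delta$ is $m$-dimensional and $\pyr(\Delta)$ is $(m+1)$-dimensional. I would invoke two standard facts. The first is the lattice-pyramid volume formula
\begin{equation*}
   \vol(\pyr(\Delta)) \;=\; \tfrac{1}{m+1}\,\rvol(\Delta),
\end{equation*}
which follows from the usual cone-volume formula $\tfrac{1}{m+1}\cdot(\text{height})\cdot(\text{base})$; the lattice-distance-$1$ hypothesis is precisely what ensures that the $\rvol$-normalization on the affine hyperplane of $\Delta$ matches the ambient $\vol$-normalization with no extra factor. The second is the centroid formula
\begin{equation*}
  \bc(\pyr(\Delta)) \;=\; \tfrac{m+1}{m+2}\,\bc(\Delta),
\end{equation*}
obtained by slicing $\pyr(\Delta)$ at heights $t \in [0,1]$ and computing $\int_0^1 t^{m+1}\,dt \big/ \int_0^1 t^{m}\,dt$. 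Projecting to $M_\RR$ preserves this linear relation, and by definition of $\rbc$ on a codim-$1$ polytope one has $\overline{\rbc}(\Delta) = \overline{\bc}(\Delta)$.

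Plugging these into the reformulated identity, the right-hand side becomes
\begin{equation*}
  \bigl(\rvol(\Delta) + \tfrac{1}{m+1}\rvol(\Delta)\bigr)\cdot \tfrac{m+1}{m+2}\,\overline{\bc}(\Delta) \;=\; \tfrac{m+2}{m+1}\,\rvol(\Delta)\cdot\tfrac{m+1}{m+2}\,\overline{\bc}(\Delta) \;=\; \rvol(\Delta)\,\overline{\bc}(\Delta),
\end{equation*}
which matches the left-hand side $\rvol(\Delta)\,\overline{\rbc}(\Delta)$. The main obstacle I anticipate is purely bookkeeping: keeping track of the two different lattice normalizations ($\rvol$ on the hyperplane of $\Delta$ versus $\vol$ on the ambient space), and confirming that the hypothesis ``lattice distance $1$ from the origin'' is exactly what makes the constant in the pyramid-volume formula equal $\tfrac{1}{m+1}$ rather than $\tfrac{d}{m+1}$ for some $d > 1$. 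Once those two scalar factors $\tfrac{1}{m+1}$ and $\tfrac{m+1}{m+2}$ are pinned down, the verification reduces to the trivial algebraic identity $1 - \tfrac{m+1}{m+2} = \tfrac{1}{m+2}$.
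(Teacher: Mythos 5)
Your proposal is correct in substance and follows essentially the same route as the paper: both arguments rest on the two per-polytope facts that a codimension-one polytope $\Delta$ at lattice distance $1$ satisfies $\rvol(\Delta)=(m+1)\vol(\pyr(\Delta))$ and $\overline{\bc}(\pyr(\Delta))=\frac{m+1}{m+2}\overline{\bc}(\Delta)$ (together with $\overline{\rbc}(\Delta)=\overline{\bc}(\Delta)$), writing $m=\dim M_\RR$, and then pass from a single polytope to $C$ using that the mass-times-barycenter quantities $\rvol(C)\,\overline{\rbc}(C)$ and $\vol(\pyr C)\,\overline{\bc}(\pyr C)$ are linear on $\widetilde\Pi$. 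In fact your constants are the ones that make the algebra close: the barycenter factor $\frac{m+2}{m+1}=\frac{n+1}{n}$ (with the paper's $n=\dim M_\RR+1$) is what the final cancellation requires, whereas the factor $\frac{n}{n-1}$ displayed in the paper's proof is a slip; the structure of the two proofs is otherwise identical.

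The one step you should tighten is the reduction to a single polytope. As written, the claim that both sides of $\rvol(C)\overline{\rbc}(C)=\bigl(\rvol(C)+\vol(\pyr C)\bigr)\overline{\bc}(\pyr C)$ are linear in $C$ is not justified: the cross term $\rvol(C)\cdot\overline{\bc}(\pyr C)$ involves the ratio $\rvol(C)/\vol(\pyr C)$ and is not linear for arbitrary $C$; being ``determined by'' the two linear quantities does not make it linear. The repair is immediate and uses exactly the hypothesis of the lemma, in the order the paper uses it: since every $\Delta_i$ is at lattice distance $1$, summing the per-polytope volume formula gives $\rvol(C)=(m+1)\vol(\pyr C)$, so the cross term equals $(m+1)\vol(\pyr C)\overline{\bc}(\pyr C)$ and the identity becomes the genuinely linear statement $\rvol(C)\overline{\rbc}(C)=(m+2)\,\vol(\pyr C)\overline{\bc}(\pyr C)$, which then follows from your single-polytope verification. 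So: prove the two scalar relations for each $\Delta_i$ first, then linearize --- after that small reordering your argument coincides with the paper's weighted-average computation.
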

\begin{proof}
  We set $n=\dim M_\RR + 1$. Now, by elementary geometry we have
\[\rvol \Delta_i = n \cdot \vol (\pyr(\Delta_i)), \qquad \overline{\rbc}(\Delta_i)=\frac{n}{n-1}\cdot \overline{\bc}(\pyr(\Delta_i)).\] It follows that 
$\rvol C = n \cdot \vol \pyr(C)$. Hence, we get 
\begin{align*}
  \overline{\rbc}(C)=
\sum a_i \frac{\rvol(\Delta_i)}{\rvol(C)} \overline{\rbc}(\Delta_i)& =
\sum a_i \frac{\vol(\pyr(\Delta_i))}{\vol(\pyr(C))}\cdot \frac{n}{n-1}\cdot\overline{\bc}(\pyr(\Delta_i))\\ & = \frac{n}{n-1}\cdot \overline{\bc}(\pyr(C)).
\end{align*}
Now, plugging in $\frac{n}{n-1}\cdot \overline{\bc}(\pyr(C))$ for $\overline{\rbc}(C)$ into $\rvol(C)\cdot(\overline{\rbc}(C) - \overline{\bc}(\pyr(C)))$
gives the desired result.
\end{proof}
\bibliographystyle{halpha}
\bibliography{../bib/all}

\end{document}